\newtheorem{theorem}{Theorem}[section]
\newtheorem{lemma}[theorem]{Lemma}
\newtheorem{definition}[theorem]{Definition}
\newtheorem{proposition}[theorem]{Proposition}
\newtheorem{corollary}[theorem]{Corollary}
\numberwithin{equation}{section}
\newtheorem{example}[theorem]{Example}
	\title[Jones-Wenzl Idempotents and the M\"obius band]{Jones-Wenzl Idempotents in the Twisted $I$-bundle over the M\"obius band}
	\author{Dionne Ibarra}
	\address{School of Mathematics, 9 Rainforest Walk, Floor 4, Monash University, VIC 3800, Australia}
	\email{dionne.ibarra@monash.edu}
	\date{\today}
	\keywords{Jones-Wenzl idempotents,  M\"obius band, unorientable surfaces, twisted I-bundles, Kauffman bracket skein module, relative Kauffman bracket skein module}
	\subjclass[2020]{Primary: 57K10. Secondary: 57K31.}
\begin{document}
\begin{abstract}
 The Jones-Wenzl idempotent plays a vital role in quantum invariants of $3$-manifolds and the colored Jones polynomial; it also serves as a useful tool for simplifying computations and proving theorems in knot theory. The relative Kauffman bracket skein module (RKBSM) for surface $I$-bundles and manifolds with marked boundaries have a well understood algebraic structure due to the work of J. H. Przytycki and T. T. Q. L\^e. It has been well documented that the RKBSM of the $I$-bundle of the annulus and the twisted $I$-bundle over the M\"obius band have distinct algebraic structures coming from the $I$-bundle structures. This paper serves as an introduction to studying the trace of Jones-Wenzl idempotents in the Kauffman bracket skein module (KBSM) of the twisted $I$-bundle of unorientable surfaces. We will give various results on Jones-Wenzl idempotents in the KBSM of the twisted $I$-bundle over the M\"obius band when it is closed through the crosscap of the M\"obius band. We will also uncover analog properties of Jones-Wenzl idempotents in the KBSM of the twisted $I$-bundle over the M\"obius band with the preservation of the $I$-bundle structure that differ from the KBSM of $Ann \times I$.
\end{abstract}

	\maketitle

\tableofcontents

	\section{Introduction}
 The Jones-Wenzl idempotent, discovered by V. F. R. Jones in \cite{Jon1},  is an idempotent element in the Temperley-Lieb algebra. Originally, it was described as a certain symmetrizer using the Artin braid group and the projection to the Temperley-Lieb algebra.  In the late 1980's, H. Wenzl in \cite{Wen} discovered a recursive formula to the Jones-Wenzl idempotent. This formula is now widely used as the definition, see \cite{Lic}. 

The Jones-Wenzl idempotent has played a significant role in defining quantum invariants of knots and $3$-manifolds. For example, W. B. R. Lickorish's Kauffman bracket skein theoretic approach to the Witten-Reshetikhin-Turaev $3$-manifold invariants in \cite{Lic1} uses a linear combination of the trace (closure) of the idempotent elements along a framed knot or link. Similarly, the colored Jones polynomial quantum knot invariant is defined by taking the trace of the $n^{th}$ Jones-Wenzl idempotent along a $0$-framed knot in $S^3$, see \cite{Le, PBIMW}. Versions of these idempotents are also used to decorate the edges of a tetrahedron to obtain the quantum $6j$-symbols that are used in the definition of the Turaev-Viro quantum $3$-manifold invariants, see \cite{TV}.

The Jones-Wenzl idempotent has been a vital tool for simplifying computations and proving theorems in knot theory. An example of this is seen in X. Cai's proof of a closed formula for the Gram determinant of type $A$ in \cite{Cai} and a closed formula for its generalization in \cite{BIMP}. In fact, this paper was conceived by needing properties of the Jones-Wenzl idempotents when it is closed in the twisted $I$-bundle over the M\"obius band in hopes to take a similar approach to \cite{Cai} and \cite{BIMP} to prove a closed formula for the Gram determinant of type $Mb$. 

In Section \ref{IntroJW} we explain the original definition of Jones-Wenzl idempotents and also the RKBSM of the twisted $I$-bundle over the M\"obius band, then in Section \ref{MBcrossingless} we give an illustration of the two different models of the M\"obius band as well as the antipodal properties of the crosscap. In Section \ref{JWMB} we prove many corollaries to the trace of Jones-Wenzl idempotents intersecting or surrounding the crosscap, then we end with two formulas, one formula for when $n-1$ curves from $f_n$ are closed around the crosscap and the last arc is closed through the crosscap and another formula for when $n-2$ curves from $f_n$ are closed around the crosscap and the remaining two arcs intersect the crosscap. 
\subsection*{Acknowledgements}

This work was supported by the Australian Research Council grant DP210103136.

%%%%%%%%%%%%%%%%%%%%%%%%%%%%%%%%%%%%%%%%%%%%%%%%%%%%%%%%%%

%%%%%%%%%%%%%%%%%%%%%%%%%%%%%%%%%%%%%%%%%%%%%%%%%%%%%%%%%%%%

	\section{Introduction to Jones-Wenzl idempotents}\label{IntroJW}

The first formal definition of the Temperley-Lieb algebra, denoted by $\mathit{TL}_n$, was given by R. J. Baxter in \cite{Bax2} while describing the work of physicists  N. Temperley and E. Lieb in \cite{TL}.  Jones independently introduced $TL_n$ in \cite{Jon1} while working on von Neumann algebras.

\begin{definition} \label{TEMPERLEY:tln}
Let $R$ be a commutative ring with unity and $d \in R$. Let $n \in \mathbb{N}$ be fixed, then the $n^{th}$ \textbf{Temperley-Lieb algebra}, $\mathit{TL}_n$, is defined to be the unital associative algebra over R with generators $ e_1, \dots , e_{n-1}$, identity element $1_n$, 
and relations
\begin{enumerate}
    \item $e_i e_{j} e_i = e_i \text{ for } |i-j|=1$,
    \item $e_i e_j = e_j e_i \text{ for } |i-j|>1$,
    \item $e_i^2 = d e_i$.
\end{enumerate}
\end{definition}

 L. H.  Kauffman in \cite{Kau2}, motivated by utilizing the Kauffman bracket, considered the Temperley-Lieb algebra over $R=\mathbb{Z}[A^{\pm1}]$, where $A$ is an indeterminate and $d = -A^2 -A^{-2}$. He then constructed a graphical interpretation using tangles. 

We will consider an \textbf{$n$-tangle} to be a rectangular shaped disk with $n$ marked boundary points on the left (input points) and $n$ marked boundary points on the right (output points). Kauffman's graphical interpretation of the Temperley-Lieb algebra is obtained from the basis of crossingless tangles where the identity element corresponds to an $n$-tangle with $n$ parallel arcs in which each $i^{\mathit{th}}$ input point is connected to the $i^{\mathit{th}}$ output point, and each $e_i$ corresponds to an $n$-tangle
that has one input and one output cap on the $i^{\mathit{th}}$ and $i+1^{\mathit{th}}$ position as illustrated in Figure \ref{TEMPERLEY:graphical}. For simplicity we will label an arc by $n$ to denote $n$ parallel arcs as shown in Figure \ref{TEMPERLEY:fig1} and if an arc is not labelled we will assume that it is one arc unless it is attached to the idempotent element of $TL_n$.

\begin{figure}[ht]
\centering
\begin{subfigure}{.49\textwidth}
\centering
$\vcenter{\hbox{
\begin{overpic}[scale = .5]{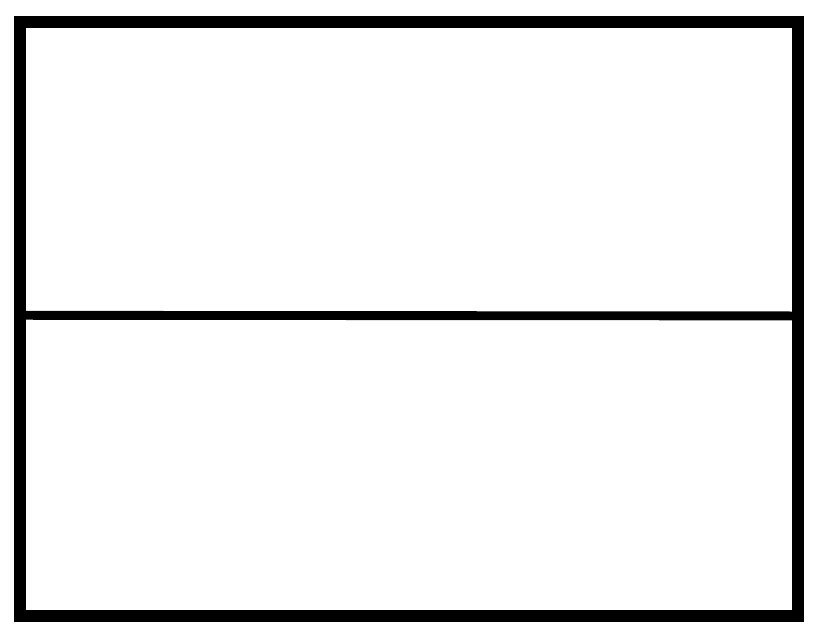}
\put(53, 48){$n$}
\end{overpic} }} $
\caption{Identity element.} \label{TEMPERLEY:fig1}
\end{subfigure}
\begin{subfigure}{.49\textwidth}
\centering
$\vcenter{\hbox{
\begin{overpic}[scale = .5]{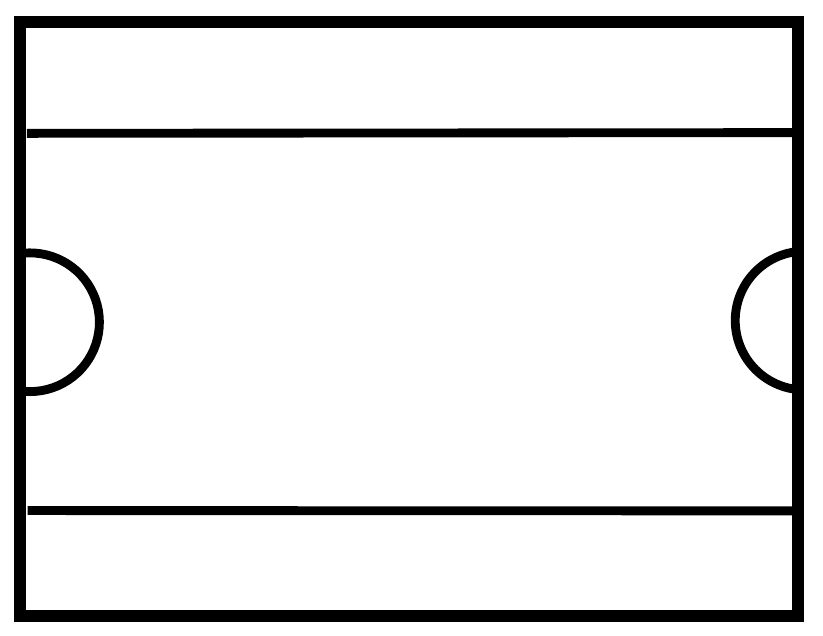}
\put(33, 20.5){$n-i-1$}
\put(45, 76){$i-1$}
\end{overpic} }} $
\caption{$e_i$.} \label{TEMPERLEY:fig2}
\end{subfigure}
\caption{The graphical interpretation of $\mathit{TL}_n$.}\label{TEMPERLEY:graphical}
\end{figure}

\begin{definition}
The \textbf{$n$-tangle algebra} is an $R$-module with basis elements consisting of $n$-tangles where multiplication of two $n$-tangles is defined by identifying the right side of the first $n$-tangle to the left side of the second $n$-tangle while respecting the boundary points and by letting any resulting trivial curve be denoted by $d$, see Figure \ref{JONESWENZL:mutip} for an illustrative example. Kauffman's diagrammatic interpretation of the Temperley-Lieb algebra, also known as the \textbf{diagrammatic algebra}, is a subalgebra of the $n$-tangle algebra. It is generated by tangles with no crossings where homotopically trivial curves are denoted by $d \in R$.  
\end{definition}

\begin{figure}[ht]
$$e_3 e_3 =\vcenter{\hbox{\begin{overpic}[scale=.5]{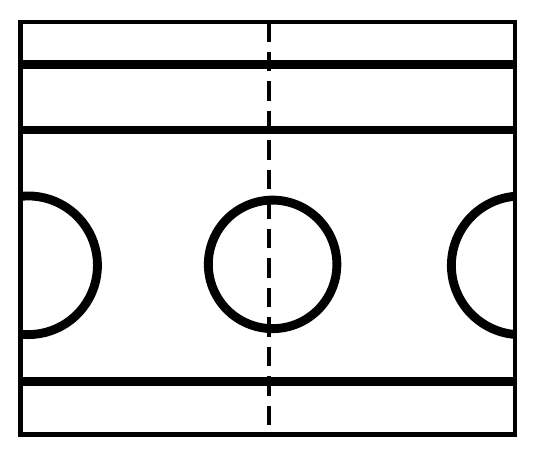}
%\put(36, 29){$F_n$}
\end{overpic}}} = d \vcenter{\hbox{\begin{overpic}[scale=.5]{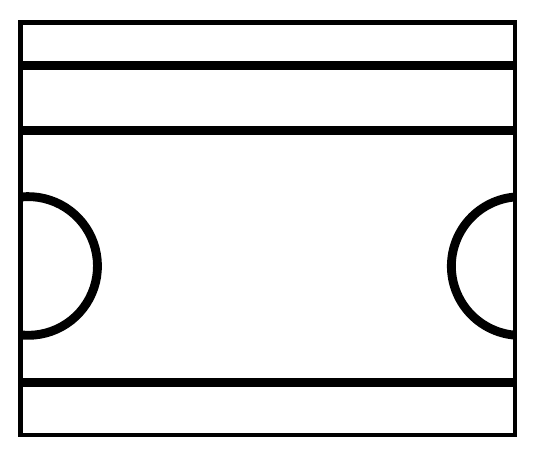}
%\put(36, 29){$F_n$}
\end{overpic}}}= d e_3.$$
\caption{An illustration of  multiplication.}\label{JONESWENZL:mutip}
\end{figure}

\begin{theorem}\cite{Kau2}
The diagrammatic algebra is isomorphic to $TL_n$ and can be thought of as a diagrammatic interpretation of it.
\end{theorem}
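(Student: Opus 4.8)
The plan is to construct an explicit $R$-algebra homomorphism $\phi \colon TL_n \to \mathcal{D}_n$, where $\mathcal{D}_n$ denotes the diagrammatic algebra, and then to show that $\phi$ is a bijection. First I would define $\phi$ on generators by sending $1_n$ to the crossingless $n$-tangle of $n$ parallel arcs (Figure \ref{TEMPERLEY:fig1}) and each $e_i$ to the cap-cup tangle of Figure \ref{TEMPERLEY:fig2}, extended $R$-linearly and multiplicatively. To see that $\phi$ is well defined I would verify that the three defining relations of Definition \ref{TEMPERLEY:tln} hold among these diagrams: relation (3), $e_i^2 = d e_i$, is exactly the computation of Figure \ref{JONESWENZL:mutip}, where stacking two copies of the cap-cup tangle produces one homotopically trivial loop that evaluates to $d$; relations (1) and (2) are checked by stacking the corresponding diagrams and observing that each composite is isotopic rel boundary to the claimed tangle, with no closed loops appearing. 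Since all relations are respected, $\phi$ descends to a homomorphism on the quotient algebra $TL_n$.

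Next I would prove surjectivity. Every basis element of $\mathcal{D}_n$ is a crossingless $n$-tangle, that is, a planar matching of the $2n$ boundary points. I would argue by the standard combinatorial decomposition that any such matching factors as a product of the generator diagrams: reading the tangle from left to right, each turn-back arc can be isolated as a cap or cup in some position $i$, so the entire diagram is realized as a composite of the images $\phi(e_i)$ (a reduced crossingless tangle contributes no closed components, hence no stray powers of $d$). Thus $\phi$ surjects onto the spanning set of basis diagrams.

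Finally, injectivity follows from a dimension count once surjectivity is known. The crossingless $n$-tangles are linearly independent by construction of the $n$-tangle algebra, and their number is the Catalan number $C_n = \frac{1}{n+1}\binom{2n}{n}$, so $\dim_R \mathcal{D}_n = C_n$ and the surjection forces $\dim_R TL_n \ge C_n$. For the reverse inequality I would reduce every monomial in the generators $e_i$ to a normal form using relations (1)--(3): repeated application of the relations rewrites any word as a canonical reduced expression, and one checks combinatorially that the reduced expressions number at most $C_n$. Hence $TL_n$ is spanned by at most $C_n$ elements, giving $\dim_R TL_n = C_n$; the surjection between free $R$-modules of equal finite rank is then an isomorphism, so $\phi$ is the desired algebra isomorphism.

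The main obstacle is this last step: establishing the upper bound $\dim_R TL_n \le C_n$ from the abstract presentation alone. Verifying the relations and surjectivity is routine diagram chasing, but controlling the abstract algebra requires a careful normal-form argument (or, alternatively, an appeal to the known faithful action of $TL_n$ on a module of dimension $C_n$) to guarantee that no additional relations are secretly needed and that $TL_n$ is no larger than its diagrammatic image. This counting, matching reduced words with crossingless matchings, is where the genuine content of the isomorphism lies.
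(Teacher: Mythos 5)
The paper offers no proof of this statement at all: it is quoted as Kauffman's theorem with a citation to \cite{Kau2}, so there is no internal argument to compare yours against. Your proposal is, in outline, the standard proof found in the literature (Kauffman's original argument and its textbook treatments), and it is essentially correct: define $\phi$ on the generators of Definition \ref{TEMPERLEY:tln}, verify relations (1)--(3) diagrammatically, prove surjectivity by factoring every non-identity crossingless diagram into cap-cup generators, and close the loop with the counting argument. You also correctly locate the genuine content in the upper bound $\dim_R TL_n \le C_n$, which requires Jones' normal form for reduced words in the $e_i$; that step is exactly what keeps this theorem from being ``routine diagram chasing.''

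Two refinements you should make. First, your final step asserts that ``the surjection between free $R$-modules of equal finite rank is then an isomorphism,'' but at that point in the argument you do not yet know $TL_n$ is free --- you only know it is spanned by at most $C_n$ monomials. The clean fix is to observe that $\phi$ carries a spanning set of at most $C_n$ elements onto the $C_n$ linearly independent basis diagrams of $\mathcal{D}_n$; a spanning set that surjects onto a basis of the image, with no room to spare, must itself be linearly independent, so it is a basis of $TL_n$ and $\phi$ is injective. (Alternatively, over a general commutative ring $R$ one can split the surjection, since $\mathcal{D}_n$ is free hence projective, and kill the kernel by Nakayama; but the spanning-set argument avoids this machinery.) Second, in the surjectivity step, note that the identity diagram is \emph{not} a product of the $e_i$ --- it is the image of $1_n$ --- so the factorization claim should be stated for diagrams with at least one turn-back, proved by induction on the number of turn-backs. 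Neither point is a fatal gap, but both need to be said for the proof to be complete.
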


We will give Jones' constructive definition of the Jones-Wenzl idempotent by using the relative Kauffman bracket skein module (RKBSM) and the Artin braid group before introducing Wenzl's recursive formula. In doing so, we will first introduce the RKBSM and emphasize that when the $I$-bundle structure is preserved the RKBSM of the twisted $I$-bundle over the M\"obius band and the RKBSM of $\textit{Ann} \times I$ are different modules. 

\begin{definition}
Let $M$ be an oriented $3$-manifold and  $\{x_i\}_{i=1}^{2n}$ be the set of $2n$ framed points on $\partial M$. Let $I=[-1, 1]$, and let $\mathcal{L}^{\mathit{fr}}(2n)$ be the set of all relative framed links (which consists of all framed links in $M$ and all framed arcs, $I \times I$, where $ I \times \partial I$ is connected to framed points on the boundary of $M$) up to ambient isotopy while keeping the boundary fixed in such a way that $L \cap \partial M = \{x_i\}_1^{2n}$. Let $R$ be a commutative ring with unity,  $A \in R$ be invertible, and let $S_{2,\infty}^{\mathit{sub}}(2n)$ be the submodule of $R\mathcal{L}^{\mathit{fr}}(2n)$ that is generated by the Kauffman bracket skein relations:

\begin{itemize}
    \item [(i)]$L_+ - AL_0 - A^{-1}L_{\infty}$, and
    \item [(ii)] $L \sqcup \pmb{\bigcirc}  + (A^2 + A^{-2})L$,
\end{itemize}
where \pmb{$\bigcirc$} denotes the framed unknot and the {\it skein triple} $(L_+$, $L_0$, $L_{\infty})$ denotes three framed links in $M$ that are identical except in a small $3$-ball in $M$ where the difference is shown in Figure \ref{KBSM:skeintriple}.

Then, the \textbf{relative Kauffman bracket skein module} (RKBSM) of $M$ is the quotient: $$\mathcal{S}_{2,\infty}(M, \{x_i\}_1^{2n}; R, A) = R\mathcal{L}^{\mathit{fr}}(2n) / S_{2,\infty}^{\mathit{sub}}(2n).$$

The \textbf{Kauffman bracket skein module} (KBSM) of a manifold $M$ is defined similarly with the exception that there are no marked points and instead of considering relative framed links we restrict to framed links. If $M = F \times I$ where $F$ is an orientable surface, then we can define an algebra over the module where the identity element is the empty link $\varnothing$ and multiplication is defined as follows. Given two framed links $L_1$ and $L_2$ in $F \times I$ ($F \hat{\times} I$), their product $L_1 \cdot L_2$ is defined by placing $L_1$ over $L_2$. In particular, $L_1 \subset F\times (0, 1)$ and $L_2 \subset F\times(-1,0)$. This algebra is called the \textbf{Kauffman bracket skein algebra} and is denoted by $\mathcal S^{\mathit{alg}}(F; R,A)$.
\end{definition}
\begin{figure}[ht]
\centering
\begin{subfigure}{.25\textwidth}
\centering
$\vcenter{\hbox{\includegraphics[scale = .35]{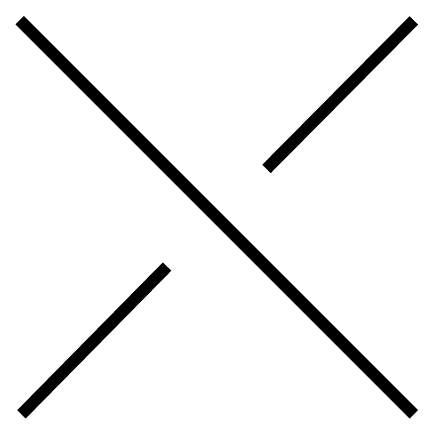}}} $
\caption{$L_+$.} \label{KBSM:Lplus}
\end{subfigure}
\centering
\begin{subfigure}{.25\textwidth}
\centering
$\vcenter{\hbox{\includegraphics[scale = .35]{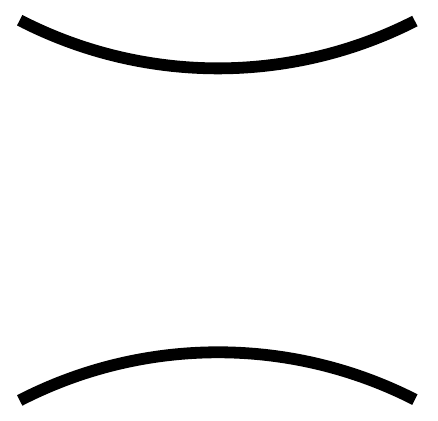}}} $
\caption{$L_0$.} \label{KBSM:Lzero}
\end{subfigure}
\centering
\begin{subfigure}{.25\textwidth}
\centering
$\vcenter{\hbox{\includegraphics[scale = .35]{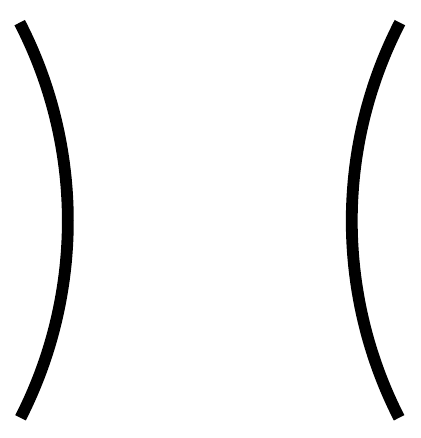}}} $
\caption{$L_{\infty}$.} \label{KBSM:Linfinity}
\end{subfigure}
\caption{The skein triple.} \label{KBSM:skeintriple}
\end{figure}

\begin{theorem}\label{rkbsmsib}\cite{Prz}
Let $F$ be a surface with $\partial F \neq \varnothing$. If $F$ is orientable then let $M = F \times \ I$ , otherwise let $M = F \hat{\times} I$. Let all $\{x_i\}_1^{2n}$  be marked points that lie on $\partial F \times \{0\}$. Then $\mathcal S_{2, \infty}(M, \{x_i\}_1^{2n}; R, A)$ is a free $R$-module whose basis is composed of relative links in $F$ without trivial components. When $n=0$, the empty link is also a generator.
\end{theorem}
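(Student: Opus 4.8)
The plan is to prove the two halves of the statement separately: first that the crossingless relative diagrams on $F$ without trivial components \emph{span} $\mathcal{S}_{2,\infty}(M,\{x_i\}_1^{2n};R,A)$, and then that they are \emph{linearly independent}. Throughout I would work with the bundle projection $p\colon M\to F$, which exists in both cases ($M=F\times I$ or $M=F\hat{\times}I$) and collapses the $I$-fibers; a generic relative framed link in $M$ projects under $p$ to a link diagram on $F$ with finitely many transverse double points, whose arc-endpoints sit at the marked points on $\partial F\times\{0\}$.

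For spanning, I would first isotope a given relative framed link into a position where $p$ restricts to a generic immersion, so that its image is a diagram $D$ on $F$ with crossings recording the relative height in the fiber. Applying the Kauffman bracket skein relation (i) at a single crossing rewrites the class of $D$ as an $R$-linear combination of the two diagrams with that crossing smoothed, each having one fewer crossing; inducting on the number of crossings expresses every class as a combination of crossingless diagrams. Finally, relation (ii) removes each null-homotopic circle at the cost of a factor $-(A^2+A^{-2})$. This shows the crossingless relative diagrams without trivial components (together with $\varnothing$ when $n=0$) span the module.

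The substantive half is linear independence, which I would establish by exhibiting a left inverse to the natural map from the free module on these diagrams. Let $\mathcal{F}$ be the free $R$-module on ambient-isotopy classes (rel the marked points) of crossingless relative diagrams on $F$ without trivial components. I would define a state-sum bracket $\langle D\rangle\in\mathcal{F}$ for an arbitrary diagram $D$ by summing over all $2^{c}$ smoothings of its $c$ crossings, weighting each state by the appropriate power of $A$ and by $(-A^2-A^{-2})$ to the number of resulting trivial circles, and recording the underlying crossingless diagram. The core step is to verify that $\langle D\rangle$ is invariant under the Reidemeister II and III moves: because these moves are supported in a disk on $F$, Kauffman's original local computation applies verbatim and yields regular-isotopy invariance, so $\langle\,\cdot\,\rangle$ descends to a well-defined invariant of framed relative links in $M$. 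By construction it sends the generators (i) and (ii) of $S_{2,\infty}^{\mathit{sub}}(2n)$ to zero, hence factors through a homomorphism $\mathcal{S}_{2,\infty}(M,\{x_i\};R,A)\to\mathcal{F}$. Since this homomorphism sends the skein class of each crossingless reduced diagram to the corresponding free generator, the spanning set from the previous paragraph maps onto a basis of $\mathcal{F}$; this forces those classes to be linearly independent in $\mathcal{S}_{2,\infty}$, completing the proof.

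The main obstacle is the Reidemeister-type theorem underlying the invariance argument: I must know that any two generic projections of an ambient-isotopic relative framed link are connected by a finite sequence of Reidemeister II and III moves together with an isotopy of $F$ fixing the marked boundary points, so that checking invariance under those two moves suffices. Two points require particular care here. First, because the links are framed and encoded by the blackboard framing, I must track framing through the moves and restrict to regular isotopy rather than allowing the curl move. Second, in the twisted case $M=F\hat{\times}I$ over a non-orientable $F$, there is no global ``over/under'' direction along orientation-reversing loops; one must check that the local smoothing conventions entering the state sum are nevertheless globally consistent, which follows from the orientability of the total space $M$. Handling the relative arcs---keeping their endpoints pinned at the marked points throughout both the spanning reduction and the move analysis---is the remaining technical bookkeeping.
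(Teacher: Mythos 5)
The paper does not prove this statement at all: it is imported verbatim from Przytycki's paper \cite{Prz}, so there is no internal proof to compare against. Your proposal reconstructs what is essentially Przytycki's own argument (itself a generalization of Kauffman's state-sum proof for the disk): spanning by resolving crossings and deleting trivial circles, then linear independence by building a well-defined homomorphism onto the free module on crossingless reduced diagrams via the bracket state sum, with well-definedness resting on Reidemeister-move invariance of diagrams in the ($I$-bundle over the) surface. That is the right route, and your two flagged subtleties are the correct ones. One small imprecision: for framed links with blackboard framing, restricting to ``regular isotopy'' (RII and RIII only) is not quite the right equivalence; isotopy of framed links in the $I$-bundle corresponds to RII, RIII, \emph{together with} cancellation of a pair of opposite kinks on a strand. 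Your state sum does respect this extra move, since a positive kink contributes $-A^{3}$ and a negative kink $-A^{-3}$, so the pair contributes $(-A^{3})(-A^{-3})=1$; this check should be added, but it costs one line and does not affect the structure of the argument.
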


J. H. Przytycki's corollary to Theorem \ref{rkbsmsib} explicitly details the differences between the basis elements of the RKBSM of $\mathit{Ann} \times I$ as an $R[z]$-module and the RKBSM of $Mb \hat{\times} I$ as an $R$-module, even though both manifolds are homeomorphic to the solid torus. This is directly due to the preservation of the $I$-bundle structure. Therefore, we are preserving the $I$-bundle structure by describing elements of $\mathcal{S}_{2,\infty}(Mb \hat\times I, \{x_i\}_1^{2n}; R, A)$ in terms of its standard basis.

\begin{corollary}\label{rkbsmcor}\cite{Prz}

\begin{enumerate}
    \item $\mathcal{S}_{2,\infty}(\mathit{Ann} \times I, \{x_i\}_1^{2n}; R, A)$ where $\{x_i\}_1^{2n}$ are located in the outer boundary component of the annulus is a free $R[z]$-module with $D_n = \binom{2n}{n}$ basis elements, where $z$ denotes the homotopically nontrivial curve in the annulus and $d=-A^2-A^{-2}$ denotes the homotopically trivial curve in the annulus. The basis is the set of all crossingless connections in the annulus with no trivial components or boundary parallel curves.  \\

    \item $\mathcal{S}_{2,\infty}(Mb \hat\times I, \{x_i\}_1^{2n}; R, A)$ is a free $R$-module. The standard basis contains an infinite number of elements of the form $bz^i$, $bxz^i$ for $i \geq 0$, where $x$ denotes the simple closed curve that intersects the crosscap once, $z$ denotes the boundary parallel curve of the M\"obius band, and $b$ is an element in the set of crossingless connections in the M\"obius band with no trivial components or boundary parallel curves for which the arcs do not intersect the crosscap. The rest of the elements in the standard basis are from a finite number of crossingless connections consisting of a collection of $n-k$ arcs for $0 \leq k <n$ that non-trivially intersect the crosscap. Among the finite collection there are $\binom{2n}{k}$ crossingless connections that intersect the crosscap $n-k$ times.  
\end{enumerate}

\end{corollary}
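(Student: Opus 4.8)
The plan is to derive the whole corollary from Theorem \ref{rkbsmsib} by pure surface topology and enumeration: that theorem already presents the RKBSM as the free $R$-module on the set of isotopy classes of crossingless relative links in $F$ carrying no trivial (contractible) closed component, so nothing remains except to describe that set explicitly for $F = \mathit{Ann}$ and $F = Mb$, with all $2n$ marked points on the outer boundary circle. I would first put an arbitrary such link into a \emph{normal form}, isotoping its closed components to be parallel copies of the standard essential curves and making its arcs taut with respect to the crosscap, and then separately classify the closed curves and the arc systems. The essential closed curves supply the polynomial/repetition data (the variable $z$ and the curve $x$), while the arcs supply the finite combinatorial data to be counted.

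For the annulus I would use that every essential simple closed curve is isotopic to the core and that parallel copies are disjoint, so a trivial-component-free crossingless relative link is isotopic to a disjoint union of a crossingless matching of the $2n$ points (containing no closed component) together with $m \ge 0$ parallel cores. Reading the $m$ cores as the monomial $z^m$ upgrades the free $R$-module of Theorem \ref{rkbsmsib} to a free $R[z]$-module whose basis is exactly the matchings with no closed or boundary-parallel component; here one must verify that the $z$-action (adjoining one more parallel core) is well defined and creates no trivial component, so the action is genuinely free. It then remains to count crossingless matchings of $2n$ points on one boundary circle. The disk count is Catalan, but since an arc may pass on either side of the hole the prefix (ballot) condition disappears, and I would make this precise by encoding each matching as an opener/closer sign sequence, giving a bijection with all $\pm1$ sequences having $n$ signs of each type, hence $\binom{2n}{n}$.

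For the Möbius band I would work in the crosscap model and use its antipodal identification. The essential simple closed curves fall into two classes: the two-sided boundary-parallel curve $z$ and the one-sided core $x$, which meets the crosscap once. Two facts are decisive: a regular neighborhood of the core is again a Möbius band with boundary isotopic to $z$, so two disjoint cores cannot coexist and $x$ occurs at most once; and a crosscap-crossing arc has nonzero geometric intersection with both $x$ (odd, from the nonorientability) and every boundary-parallel $z$ (even but $\ge 2$, since both endpoints lie outside $z$ while the arc reaches the crosscap inside it), so such an arc is disjoint from neither. This produces the stated dichotomy. When no arc meets the crosscap, the arc system $b$ is disjoint from $x$ and $z$, so one may freely adjoin $i$ copies of $z$ and optionally one $x$, giving the infinite families $bz^i$ and $bxz^i$; when at least one arc meets the crosscap, no $z$ or $x$ can be present, so (with the at-most-one-core constraint and a fixed number of arcs) only finitely many such configurations occur. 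This is exactly why the module is free over $R$ but, unlike the annulus, not free over $R[z]$: multiplying a crosscap configuration by $z$ forces an intersection that the Kauffman relations resolve into a combination of other basis elements.

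Finally I would count the finite crosscap configurations, grouping them by the number $k$ of arcs avoiding the crosscap, so $n-k$ arcs cross it with $0 \le k < n$. Using the antipodal identification I would straighten the $n-k$ crosscap arcs so that a crossingless system is recorded by a combinatorial datum on the $2n$ boundary points, and then show this encoding is a bijection onto a set of size $\binom{2n}{k}$, checking $n=1,2$ by hand as a sanity test. I expect the genuine obstacle to be precisely this last enumeration together with the normal-form lemma beneath it: one must prove that every crossingless connection with $n-k$ crosscap arcs has a unique taut representative, that the antipodal straightening is well defined and both injective and surjective onto the index set, and that the crosscap-crossing arcs impose no hidden crossing obstruction. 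The curve classification and the ``no disjoint $z$ or $x$'' lemma are routine surface topology; the two combinatorial bijections producing $\binom{2n}{n}$ and $\binom{2n}{k}$ are where the real work lies.
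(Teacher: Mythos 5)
You should know at the outset that the paper never proves this corollary: both Theorem \ref{rkbsmsib} and Corollary \ref{rkbsmcor} are imported from \cite{Prz} as citations, so there is no internal proof to compare against, and your proposal must be judged against the strategy of the cited source, which is indeed the one you follow. Your reduction is the right one: Theorem \ref{rkbsmsib} leaves only the problem of classifying and enumerating crossingless connections without trivial components. Your surface topology is sound throughout --- essential closed curves in the annulus are all core-parallel, which is what upgrades the free $R$-module to a free $R[z]$-module; in the M\"obius band the only essential curves are $z$ and the one-sided core $x$, at most one copy of $x$ can occur, and a crosscap-crossing arc must meet both $x$ (odd $\mathbb{Z}/2$ intersection pairing) and $z$ (separation), which is exactly what produces the dichotomy between the infinite families $bz^i$, $bxz^i$ and the finite list of crosscap configurations. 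The opener/closer encoding giving $\binom{2n}{n}$ for the annulus is standard and correct.

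The genuine gap is the final count of $\binom{2n}{k}$, which you defer entirely: you promise ``a combinatorial datum on the $2n$ boundary points'' and ``a bijection onto a set of size $\binom{2n}{k}$'' without specifying the datum, the index set, or the map, and this is the whole combinatorial content of part (2). What is actually needed is, first, a structure lemma: in a crossingless configuration with $n-k$ crosscap arcs, each non-crosscap arc must cut off a disk \emph{not} containing the crosscap (otherwise the crosscap arcs could not reach the boundary without crossing it), all marked points inside such a disk must be matched within it, and the pairing of the remaining $2(n-k)$ points is forced by the antipodal identification; hence a configuration is equivalent to a non-crossing partial matching of the $2n$ cyclically ordered points by $k$ arcs, no arc enclosing an unmatched point. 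Second, one must prove that the number of such partial matchings is $\binom{2n}{k}$. This is a real identity, not a formality: the naive count is a sum of products of Catalan numbers $C_j$ over the ways the $k$ arcs distribute into the gaps between crosscap points (for instance $n=3$, $k=2$ gives $6\, C_0 C_2 + 3\, C_1 C_1 = 15 = \binom{6}{2}$), and collapsing that sum to a single binomial coefficient requires either an induction or an explicit bijection, such as recording for each non-crosscap arc a distinguished endpoint. Your small-case sanity checks are consistent with the statement, but as written the proposal stops exactly where the proof of part (2) begins to have content.
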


\begin{definition}The \textbf{Artin braid group} is defined by the following group presentation:
$$ B_n = < \sigma_1, \dots, \sigma_{n-1} ; \sigma_i \sigma_j = \sigma_j \sigma_i \text{ for } |i-j| >1,
\sigma_{i\pm1} \sigma_i \sigma_{i\pm1} = \sigma_i \sigma_{i \pm 1} \sigma_i>. $$
\end{definition}

The Artin braid group can be interpreted using $n$-tangles where elements of $B_n$ are positive braids. More precisely, an element in $B_n$ can be represented as an $n$-tangle with positive crossings such that the boundary of each arc is attached to one input and one output point and when read from left to right each generator $\sigma_i$ corresponds to the positive crossing of the $i^{\mathit{th}}$ and $i+1^{\mathit{th}}$ arcs. That is, the $i^{\mathit{th}}$ generator element $\sigma_i$ is a positive transposition of the $i^{\mathit{th}}$ and $i+1^{\mathit{th}}$ arcs.  \\

Furthermore, there exists an epimorphism $p: B_n \to S_n$ from the Artin braid group to the permutation group that uniquely interprets a braid word. Let $p$ be defined by sending generators of $B_n$, $\sigma_i$, to the transpositions in $S_n$; $s_i = (i, i+1)$ for $1 \leq i \leq n-1$. For a permutation $\pi \in S_n$, let $b_{\pi}$ denote the unique minimal positive braid word such that $p(b_{\pi}) = \pi$.

\begin{definition}\label{JONESWENZL:Asymmetrizer}
Let $\mathbb{Z}[A^{\pm1}]$ denote the ring of Laurent polynomials in the variable $A$ and $\mathbb{Q}(A)$ denote the field of rational functions in the variable $A$; whose elements are functions of the form $P/Q$ where $P, Q \in \mathbb{Z}[A^{\pm1}]$.
We define an \textbf{unnormalized $A$-symmetrizer}, $F_n \in \mathbb{Z}[A^{\pm1}]B_n$,  by the following
$$F_n = \sum_{ \pi \in S_n} (A^3)^{| \pi |} b_{\pi}, $$
and the normalized symmetrizer, also known as the \textbf{$A$-symmetrizer} and denoted by $f_n \in \mathbb{Q}(A)B_n$, by the formula
$$f_n = \frac{1}{[n]_{A^4}!} F_n, $$

where $|\pi|$ denotes the minimal length of the permutation $\pi$ written as elementary transposition generators,  $[n]_{A^4} = 1 + A^4 + A^8 + \cdots + A^{4(n-1)} = \frac{A^{4n}-1}{A^4-1}$ and
$[n]_{A^4} ! = \prod\limits_{i=1}^{n} [i]_{A^4}$. \\
$F_n$ evaluated in $\mathcal{S}_{2,\infty}(D^2 \times I, \{x_i\}_1^{2n}; R, A) $ is an element in the Temperley-Lieb algebra and the normalization is chosen so that $f_n$ is an idempotent element in $\mathit{TL}_n$. The most recognized name for this $A$-symmetrizer is the \textbf{Jones-Wenzl idempotent}. We will denote this element as a square with $n$ strands entering and $n$ strands exiting, as shown in Figure \ref{JONESWENZL:jwfig} and $f_n$ will denote the Jones-Wenzl idempotent.
\end{definition}

\begin{figure}[ht]
$$\vcenter{\hbox{\begin{overpic}[scale=.35]{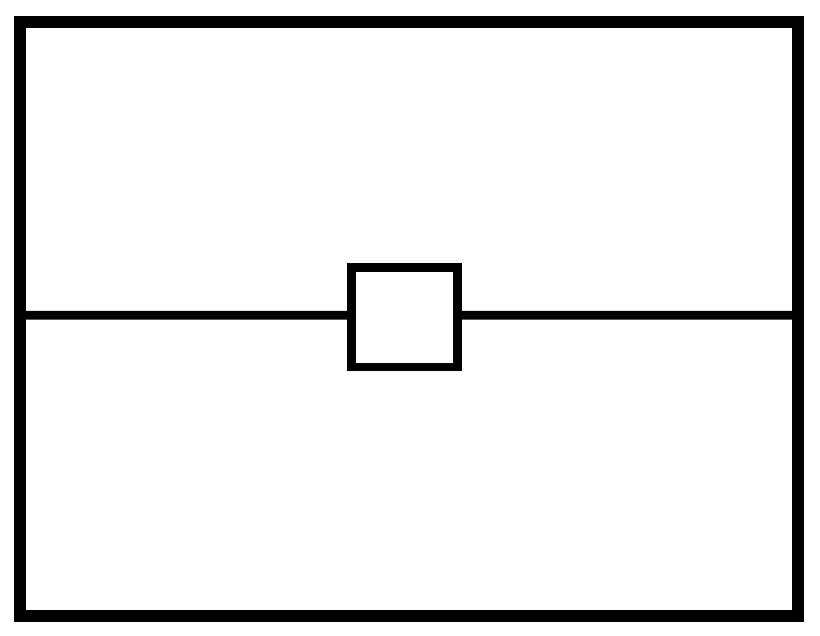}
\put(13, 35){$n$}
%\put(36, 29){$F_n$}
\end{overpic}}}$$
\caption{The Jones-Wenzl idempotent.}\label{JONESWENZL:jwfig}
\end{figure}

Wenzl's recursive formula uses the knot theoretic Chebyshev polynomial of the second kind defined below.

\begin{definition}
The $n^{th}$ \textbf{Chebyshev polynomial of the first kind} is defined recursively by the initial conditions $T_0 (d)= 2, \  T_1(d) = d$ and Equation \ref{Chebyshevfirst kindEq}.
\begin{equation} \label{Chebyshevfirst kindEq}
T_n(d) = d T_{n-1}(d) - T_{n-2}(d).
\end{equation}

The $n^{th}$ \textbf{Chebyshev polynomial of the second kind} is defined recursively by the initial conditions $S_0 (d)= 1, \  S_1(d) = d$ and the same recursive relation as the first kind, $S_n(d) = d S_{n-1}(d) - S_{n-2}(d)$. 
\end{definition}
	
When we substitute $d = -A^{2}-A^{-2}$, the Chebyshev polynomial of the first kind has the following closed formula
$$T_{n}(d) = (-1)^n(A^{2n}+A^{-2n}),$$
and the Chebyshev polynomial of the second kind has the following closed formula, denoted by $\Delta_n$,
$$\Delta_n=(-1)^n\frac{A^{2n+2}-A^{-2n-2}}{A^2-A^{-2}}=(-1)^nA^{-2n}[n+1]_{A^4}. $$

\begin{theorem}\cite{Wen} \label{JONESWENZL:wenzltheorem}
	A recursion formula for the $n^{\mathit{th}}$ Jones-Wenzl idempotent, $f_{n}$, is described in Equation \ref{JONESWENZL:wenzlrecursion}:
\begin{equation} \label{JONESWENZL:wenzlrecursion}
f_n  = 
\vcenter{\hbox{\begin{overpic}[scale=.48]{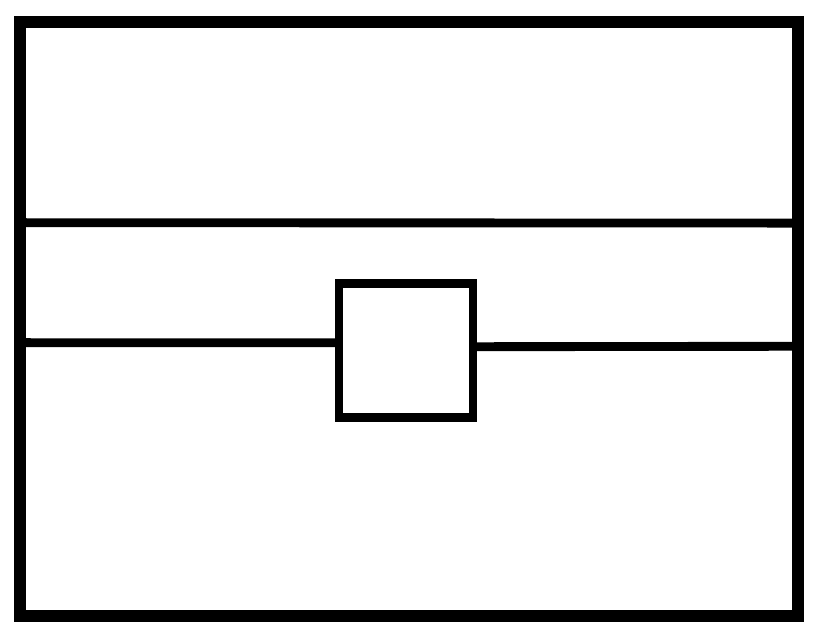}
\put(9, 44){\fontsize{7}{7}$n-1$}
%\put(36, 29){$F_n$}
\end{overpic}}} 
 - \frac{\Delta_{n-2}}{\Delta_{n-1}}
\vcenter{\hbox{\begin{overpic}[scale=.48]{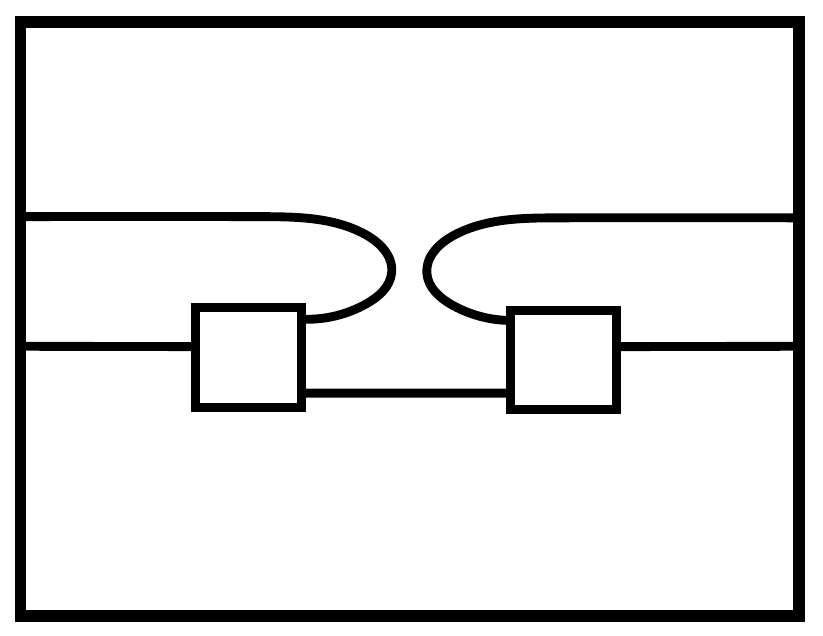}
\put(4, 42){\fontsize{6}{6}$n-1$}
\put(91, 42){\fontsize{6}{6}$n-1$}
\put(48, 36){\fontsize{6}{6}$n-2$}
%\put(36, 29){$F_n$}
\end{overpic}}}.
\end{equation}

\end{theorem}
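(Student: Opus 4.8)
The plan is to prove the recursion by the classical uniqueness argument: I would show that the right-hand side of Equation \ref{JONESWENZL:wenzlrecursion} satisfies the two properties that characterize $f_n$ inside $TL_n$, and then appeal to uniqueness. First I would record the standard characterization that, over $\mathbb{Q}(A)$, $f_n$ is the unique element of $TL_n$ whose coefficient on the identity $1_n$ equals $1$ and which satisfies $e_i f_n = f_n e_i = 0$ for every $1 \le i \le n-1$; idempotency is then automatic. This property is equivalent to Jones' symmetrizer in Definition \ref{JONESWENZL:Asymmetrizer}, because resolving each crossing via $\sigma_i \mapsto A\,1_n + A^{-1}e_i$ turns the braid symmetrizer into a diagrammatic element annihilated by every $e_i$. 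Writing $g_n$ for the right-hand side and $f_{n-1}\otimes 1$ for the tangle placing $f_{n-1}$ on the first $n-1$ strands alongside one parallel strand, I would argue by induction on $n$, with the base cases $f_1 = 1_1$ and $f_2 = 1_2 - d^{-1}e_1$ checked directly.

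The two easy verifications come first. For the leading coefficient, the summand $f_{n-1}\otimes 1$ contributes $1_n$ with coefficient $1$ by the inductive hypothesis, while the second summand lies in the two-sided ideal generated by $e_{n-1}$ -- every diagram occurring in it contains that turnback -- and so contributes nothing to the coefficient of $1_n$. For the relation $g_n e_i = 0$ with $1 \le i \le n-2$, the generator $e_i$ involves only the first $n-1$ strands and commutes past the extra strand, so $(f_{n-1}\otimes 1)e_i = (f_{n-1}e_i)\otimes 1 = 0$ by induction; the same rightmost factor annihilates the second summand, and $e_i g_n = 0$ is symmetric.

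The substantial step, and the one I expect to be the main obstacle, is the relation $g_n e_{n-1} = 0$ (and, symmetrically, $e_{n-1}g_n = 0$). Expanding gives
\[ g_n e_{n-1} = (f_{n-1}\otimes 1)\,e_{n-1} - \frac{\Delta_{n-2}}{\Delta_{n-1}}\,(f_{n-1}\otimes 1)\,e_{n-1}\,(f_{n-1}\otimes 1)\,e_{n-1}, \]
so everything rests on the \emph{bubble} (partial-trace) identity
\[ e_{n-1}\,(f_{n-1}\otimes 1)\,e_{n-1} = \frac{\Delta_{n-1}}{\Delta_{n-2}}\,(f_{n-2}\otimes 1)\,e_{n-1}. \]
The delicate point is that the scalar $\Delta_{n-1}/\Delta_{n-2}$ is exactly the partial trace obtained by closing the last strand of $f_{n-1}$, so to avoid circularity I would compute it from the inductive hypothesis rather than from the formula being proved: applying the already-established recursion for $f_{n-1}$ and closing its final strand produces $\bigl(d - \Delta_{n-3}/\Delta_{n-2}\bigr)f_{n-2}$, which equals $(\Delta_{n-1}/\Delta_{n-2})f_{n-2}$ by the Chebyshev relation $\Delta_{n-1} = d\,\Delta_{n-2} - \Delta_{n-3}$. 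Substituting the bubble identity back in and using the absorption property $f_{n-1}(f_{n-2}\otimes 1) = f_{n-1}$ collapses the double turnback to $(\Delta_{n-1}/\Delta_{n-2})(f_{n-1}\otimes 1)e_{n-1}$, whereupon the prefactor $\Delta_{n-2}/\Delta_{n-1}$ makes the two terms cancel identically.

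Finally, two remarks would close the argument. The coefficient $\Delta_{n-2}/\Delta_{n-1}$ makes sense precisely because $\Delta_{n-1} \ne 0$ in $\mathbb{Q}(A)$, which is why the symmetrizer is normalized over the field of rational functions and not over $\mathbb{Z}[A^{\pm1}]$. With both characterizing properties verified for $g_n$, uniqueness yields $g_n = f_n$ and completes the induction. I note that one could instead argue directly from $F_n = \sum_{\pi\in S_n}(A^3)^{|\pi|}b_\pi$ via the coset decomposition $S_n = \bigsqcup_k S_{n-1}c_k$ with length-additive representatives $c_k$, resolving the resulting ladder of crossings through $\sigma_i \mapsto A\,1_n + A^{-1}e_i$; this is more calculational and bypasses the characterization, but the uniqueness route is cleaner to organize.
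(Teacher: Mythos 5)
The paper itself gives no proof of this theorem: it is imported by citation from Wenzl \cite{Wen}, and the companion facts (Lemma \ref{JONESWENZL:thmfnbasis}, Corollary \ref{JONESWENZL:corollarytrace}) are likewise quoted from the literature, so there is no internal argument to compare yours against. Your proposal is the standard diagrammatic uniqueness argument (the one found in Lickorish's book and in Kauffman--Lins), and it is essentially correct: characterize $f_n$ over $\mathbb{Q}(A)$ as the unique element of $TL_n$ with coefficient $1$ on $1_n$ that is annihilated by every $e_i$, then verify both properties for the right-hand side $g_n$ by induction. You also handle the two points where such proofs typically go wrong. First, you avoid circularity by deriving the bubble identity $e_{n-1}(f_{n-1}\otimes 1)e_{n-1} = \tfrac{\Delta_{n-1}}{\Delta_{n-2}}(f_{n-2}\otimes 1)e_{n-1}$ from the inductive hypothesis together with the Chebyshev recursion $\Delta_{n-1} = d\,\Delta_{n-2} - \Delta_{n-3}$, rather than from Corollary \ref{JONESWENZL:corollarytrace}, which is logically downstream of the formula being proved. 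Second, the collapse of the double turnback correctly uses the absorption property (Lemma \ref{JONESWENZL:thmfnbasis}(c)'), available for $f_{n-1}$ and $f_{n-2}$ by induction. The one spot you should firm up is the claimed equivalence between the characterization and Jones' constructive symmetrizer of Definition \ref{JONESWENZL:Asymmetrizer}: that the normalized braid symmetrizer has identity coefficient $1$ and is killed by every $e_i$ is itself a nontrivial computation, and to keep the whole argument non-circular it must be taken from the constructive derivation (as in \cite{PBIMW}, which the paper points to) rather than from Wenzl's recursion. It is also worth noting that your route differs from the cited source itself: Wenzl's original proof is algebraic, working with towers of algebras and conditional expectations/Markov traces, whereas your argument buys a purely diagrammatic, self-contained induction at the cost of working over the field $\mathbb{Q}(A)$ where the scalars $\Delta_{n-1}$ are invertible.
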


The following lemma can be obtained from Wenzl's recursive formula as discussed in \cite{Lictln, Lic} or from the constructive definition of the Jones-Wenzl idempotent as detailed in \cite{PBIMW}.
\begin{lemma} \label{JONESWENZL:thmfnbasis}\cite{Lictln, Lic}
\begin{enumerate}
\item[(a)] $(f_n-1)$ is an element of the algebra generated by $\{ e_i\}_{i=1}^{n-1}.$
\item[(b)] $e_i f_n = f_n e_i = 0$ for $1 \leq i \leq n-1$.
\item[(c)] $f_n f_n = f_n$.
\item[(c)'] For $m\leq n$, 
$$ \vcenter{\hbox{\begin{overpic}[scale=.35]{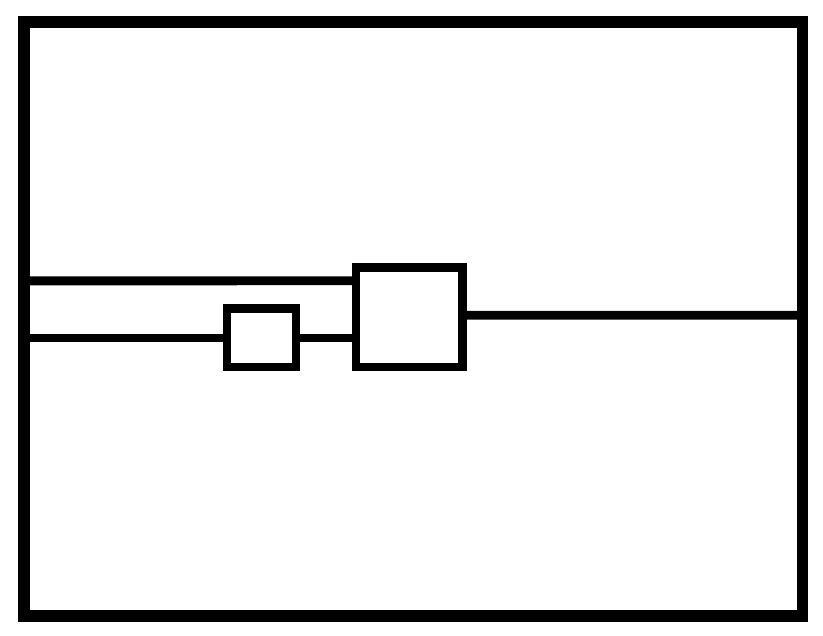}
\put(60, 35){$n$}
\put(9, 39){\fontsize{6}{6}$n-m$}
\put(9, 25){\fontsize{6}{6}$m$}
%\put(36, 29){$F_n$}
\end{overpic}}} = \vcenter{\hbox{\begin{overpic}[scale=.35]{JW1.pdf}
\put(13, 35){$n$}
%\put(36, 29){$F_n$}
\end{overpic}}}.$$
\end{enumerate}
\end{lemma}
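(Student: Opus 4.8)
The plan is to prove (a)--(c)$'$ together by induction on $n$, with Wenzl's recursion (Theorem~\ref{JONESWENZL:wenzltheorem}) as the engine. Writing $f_{n-1}$ also for its image $f_{n-1}\otimes 1_1$ in $TL_n$ (acting on the first $n-1$ of the $n$ strands), the recursion reads $f_n = f_{n-1} - \tfrac{\Delta_{n-2}}{\Delta_{n-1}}\,f_{n-1}e_{n-1}f_{n-1}$. The base cases are direct: $f_1 = 1$ makes (a)--(c)$'$ trivial, and for $n=2$ one has $f_2 = 1 - \tfrac1d e_1$ (since $\Delta_0/\Delta_1 = 1/d$), so that $e_1 f_2 = e_1 - \tfrac1d e_1^2 = e_1 - e_1 = 0$. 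Within the inductive step I would establish the statements in the order (a), then a partial-trace identity, then (b), and finally (c) and (c)$'$ as formal consequences.

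Statement (a) is immediate: by the inductive hypothesis $f_{n-1}-1 \in \langle e_1,\dots,e_{n-2}\rangle$, and the second term of the recursion carries an explicit factor $e_{n-1}$, so $f_n - 1$ is a polynomial in $e_1,\dots,e_{n-1}$ with no constant term. The case $1 \le i \le n-2$ of (b) is equally formal: such an $e_i$ acts within the first $n-1$ strands, so $f_{n-1}e_i = (f_{n-1}e_i)\otimes 1_1 = 0$ by the inductive hypothesis, and each term of $f_n e_i$ and of $e_i f_n$ vanishes separately.

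The crux is the case $i = n-1$ of (b), and here lies the main obstacle: a genuinely diagrammatic ``bubble'' computation. I would first prove the partial-trace identity $\mathrm{ptr}(f_m) = \tfrac{\Delta_m}{\Delta_{m-1}} f_{m-1}$, obtained by closing off the last strand of $f_m$ in the recursion: the first term contributes $d\,f_{m-1}$ (a closed trivial loop evaluates to $d = \Delta_1$), the sandwich $\mathrm{ptr}(f_{m-1}e_{m-1}f_{m-1})$ reconnects the two boxes directly into $f_{m-1}f_{m-1} = f_{m-1}$ by idempotence at the previous level, and the Chebyshev recursion $\Delta_m = d\,\Delta_{m-1} - \Delta_{m-2}$ produces the stated coefficient. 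Closing the last strand of $f_{n-1}$ between two cap-cups then yields $e_{n-1} f_{n-1} e_{n-1} = \tfrac{\Delta_{n-1}}{\Delta_{n-2}} f_{n-2}\, e_{n-1}$. Substituting this into $f_n e_{n-1} = f_{n-1}e_{n-1} - \tfrac{\Delta_{n-2}}{\Delta_{n-1}} f_{n-1}\bigl(e_{n-1}f_{n-1}e_{n-1}\bigr)$, the absorption $f_{n-1}f_{n-2} = f_{n-1}$ (property (c)$'$ one level down) simplifies the second term and the coefficients $\tfrac{\Delta_{n-2}}{\Delta_{n-1}}$ and $\tfrac{\Delta_{n-1}}{\Delta_{n-2}}$ cancel, giving $f_n e_{n-1} = f_{n-1}e_{n-1} - f_{n-1}e_{n-1} = 0$; the mirror computation (or the top--bottom symmetry of $f_n$) gives $e_{n-1}f_n = 0$. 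I expect verifying the bubble identity and tracking these coefficient cancellations to be where all the real work sits.

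Finally, (c) and (c)$'$ follow formally from (a) and (b). Writing $f_n = 1 + y$ with $y$ a polynomial in the $e_i$ having no constant term, every monomial of $y$ begins with some $e_j$, so $f_n y = 0$ by (b) and hence $f_n f_n = f_n$. For (c)$'$ with $m \le n$, (a) applied to $f_m$ writes the inserted idempotent as $1$ plus a constant-term-free polynomial in the generators supported on the relevant $m$ strands; multiplying by $f_n$ and using (b) annihilates every nontrivial monomial, leaving $f_n$. As a cleaner alternative that bypasses the bubble computation, one can argue from the constructive definition (Definition~\ref{JONESWENZL:Asymmetrizer}): expanding each crossing as $\sigma_i \mapsto A + A^{-1}e_i$, the constant term of $F_n$ is $\sum_{\pi \in S_n} A^{4|\pi|} = [n]_{A^4}!$, giving (a) after normalization, while for (b) one groups $S_n$ into left cosets $\{\pi, s_i\pi\}$ with $|s_i\pi| = |\pi|+1$, uses $b_{s_i\pi} = \sigma_i b_\pi$ to factor $(1 + A^3\sigma_i)$ out of each pair, and checks the single identity $e_i(1 + A^3\sigma_i) = (1 + A^4 + A^2 d)\,e_i = 0$ in $TL_n$; statements (c) and (c)$'$ then follow exactly as above.
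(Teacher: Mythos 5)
Your proof is correct. Note that the paper itself offers no proof of this lemma: it is stated as a citation, with the remark that it ``can be obtained from Wenzl's recursive formula as discussed in \cite{Lictln, Lic} or from the constructive definition of the Jones-Wenzl idempotent as detailed in \cite{PBIMW}.'' Your argument fills in precisely those two routes: the simultaneous induction on $(a)$--$(c)'$ driven by Wenzl's recursion --- with the partial-trace identity (which is exactly the paper's Corollary \ref{JONESWENZL:corollarytrace}) and the bubble identity $e_{n-1}f_{n-1}e_{n-1} = \tfrac{\Delta_{n-1}}{\Delta_{n-2}}f_{n-2}e_{n-1}$ doing the real work, and the coefficient cancellation coming out exactly as you describe --- is Lickorish's approach, while your closing sketch via the symmetrizer expansion $\sigma_i \mapsto A + A^{-1}e_i$ and the coset pairing $\{\pi, s_i\pi\}$ with $e_i(1+A^3\sigma_i) = (1+A^4+A^2d)e_i = 0$ is the constructive approach of \cite{PBIMW}. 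Both are sound, so your proposal matches what the paper intends rather than deviating from it.
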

	
A direct application of the next corollary will be given in Section \ref{JWMB}.

 \begin{corollary}\label{JONESWENZL:corollarytrace}\cite{LicCalc} Let $\mathit{tr}_1(f_n)$ be obtained from $f_n$ by closing the top string in $f_n$ (see Figure \ref{JONESWENZL:trace}). Then  $$\mathit{tr}_1(f_n)=\frac{\Delta_{n}}{\Delta_{n-1}}f_{n-1}.$$
\end{corollary}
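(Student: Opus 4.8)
The plan is to apply Wenzl's recursion (Theorem \ref{JONESWENZL:wenzltheorem}) to $f_n$ and then take the partial trace $\mathit{tr}_1$ of both sides, exploiting the fact that $\mathit{tr}_1$ closes only the single distinguished strand that $f_{n-1}$ leaves untouched. Writing the recursion as
$$f_n = (f_{n-1}\otimes 1) - \frac{\Delta_{n-2}}{\Delta_{n-1}}\,(f_{n-1}\otimes 1)\,e_{n-1}\,(f_{n-1}\otimes 1),$$
where $f_{n-1}$ acts on the strands not being closed and the extra strand is the one attached to the outer cap/cup of $e_{n-1}$, I would first arrange (invoking the invariance of $f_n$ under the top-to-bottom flip) that the strand closed by $\mathit{tr}_1$ is precisely this extra strand, so that each factor $f_{n-1}\otimes 1$ acts as the identity on the traced strand.

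Next I would evaluate the two terms separately. For the first term, closing the identity strand of $f_{n-1}\otimes 1$ produces a single disjoint trivial loop, which by the Kauffman bracket relation contributes the scalar $d=-A^2-A^{-2}=\Delta_1$; hence $\mathit{tr}_1(f_{n-1}\otimes 1)=d\,f_{n-1}$. For the second term I would use the standard partial-trace property: since each $f_{n-1}\otimes 1$ is the identity on the traced strand, both copies of $f_{n-1}$ pull outside the trace, giving $f_{n-1}\,\mathit{tr}_1(e_{n-1})\,f_{n-1}$. A short diagrammatic isotopy then shows $\mathit{tr}_1(e_{n-1})=1_{n-1}$: closing the top strand joins the cup and the cap of $e_{n-1}$ into a single through-arc on the adjacent strand, without creating any closed loop. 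Combining this with the idempotence $f_{n-1}f_{n-1}=f_{n-1}$ from Lemma \ref{JONESWENZL:thmfnbasis}(c) yields $\mathit{tr}_1\bigl((f_{n-1}\otimes 1)\,e_{n-1}\,(f_{n-1}\otimes 1)\bigr)=f_{n-1}$.

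Assembling the two contributions gives $\mathit{tr}_1(f_n)=\bigl(d-\tfrac{\Delta_{n-2}}{\Delta_{n-1}}\bigr)f_{n-1}$, and the final step is to identify the scalar. Clearing the denominator and invoking the Chebyshev recursion $\Delta_n=d\,\Delta_{n-1}-\Delta_{n-2}$ (which $\Delta_n=S_n(-A^2-A^{-2})$ satisfies) shows $d\,\Delta_{n-1}-\Delta_{n-2}=\Delta_n$, so the coefficient collapses to $\Delta_n/\Delta_{n-1}$, as claimed.

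I expect the main obstacle to be bookkeeping the conventions rather than any deep difficulty: one must ensure that the strand closed by $\mathit{tr}_1$ coincides with the strand on which both copies of $f_{n-1}$ act trivially in the chosen form of the recursion, and must justify the partial-trace property that lets the idempotents be extracted from $\mathit{tr}_1$. The identity $\mathit{tr}_1(e_{n-1})=1_{n-1}$ is the one place where a careful diagrammatic argument is needed, precisely to confirm that no spurious loop (and hence no extra factor of $d$) is introduced.
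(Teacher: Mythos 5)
Your proof is correct: this corollary is stated in the paper without proof (it is cited from Lickorish's \emph{Calculations with the Temperley-Lieb algebra}), and your argument via Wenzl's recursion is precisely the standard one found there. The three ingredients you assemble --- the disjoint loop giving $d\,f_{n-1}$, the partial-trace identity $\mathit{tr}_1\bigl((f_{n-1}\otimes 1)\,e_{n-1}\,(f_{n-1}\otimes 1)\bigr)=f_{n-1}\,\mathit{tr}_1(e_{n-1})\,f_{n-1}=f_{n-1}$, and the Chebyshev recursion $\Delta_n=d\,\Delta_{n-1}-\Delta_{n-2}$ --- are all valid, and your care about which strand is closed (justified by the flip symmetry of $f_n$) is exactly the right bookkeeping point to flag.
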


\begin{figure}[ht] 
\centering
$\vcenter{\hbox{\begin{overpic}[scale=.4]{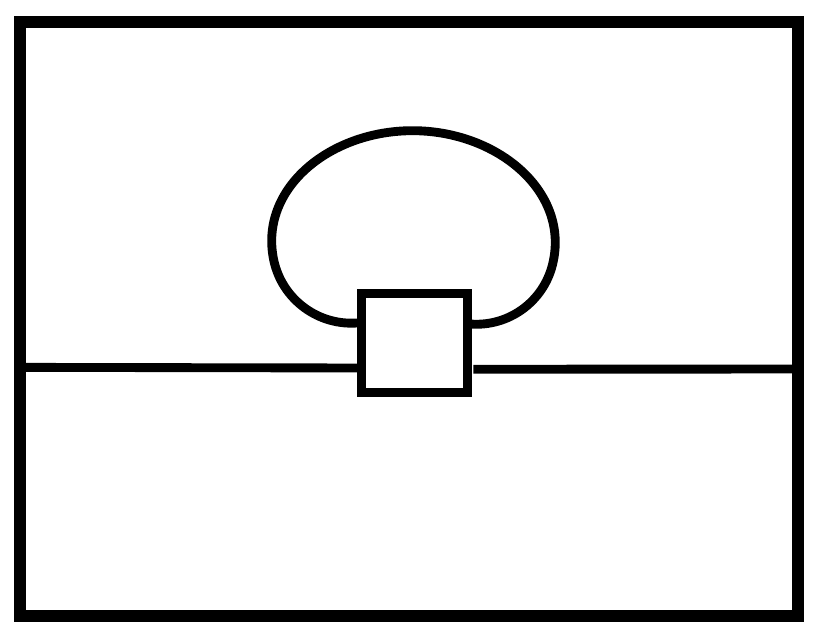}
\put(5, 34){\fontsize{9}{9}$n-1$}
%\put(36, 29){$F_n$}
\end{overpic}}} = \frac{\Delta_n}{\Delta_{n-1}}
\vcenter{\hbox{\begin{overpic}[scale=.4]{JW1.pdf}
\put(8, 39.5){\fontsize{9}{9}$n-1$}
%\put(36, 29){$F_n$}
\end{overpic}}}$
\caption{Illustration of $\mathit{tr}_1 (f_n)$.}
\label{JONESWENZL:trace}
\end{figure}

When defining the colored Jones polynomial, many authors use the term ``decorating a knot by the Chebyshev polynomial". In the case of $SU(2)$ invariants this can be described by taking the trace of $f_n$ along a framed knot (up to normalization). This is because the trace of $f_n$ along the standard annulus $S^1 \times I$, where $S^1$ is the trivial knot, is equal to the Chebyshev polynomial of the second kind as stated in the next two corollaries. 

\begin{corollary} \cite{LicCalc} Let $tr(f_n)$ denote the trace (closure) of $f_n$ obtained by identifying the left point to the right points in the trivial way, then
$$ tr(f_n) = \Delta_n.$$
\end{corollary}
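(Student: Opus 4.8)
The plan is to reduce the full trivial closure to an iteration of the single-strand closure $\mathit{tr}_1$ and then apply Corollary \ref{JONESWENZL:corollarytrace} repeatedly, peeling off the idempotent one strand at a time. Geometrically, the closure that identifies each left point with the corresponding right point in the trivial way can be realized strand by strand: close the topmost strand first, then the next, and so on, until all $n$ strands have been joined into loops. Since each individual closure is a local operation supported in a collar neighborhood of the boundary, and since the skein module is generated by local relations, each such closure is a well-defined $R$-linear map on the relevant relative skein module. The key structural input is therefore that the total closure decomposes as the nested composition of $n$ single-strand closures.

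First I would set up an induction on $n$. For the base case I take $n=0$: the trace of $f_0$ is the empty link, which evaluates to $1 = \Delta_0$; as an equivalent anchor, the closure of the single strand $f_1$ is one trivial loop, evaluating to $d = -A^2 - A^{-2} = \Delta_1$. For the inductive step I would close the top strand of $f_n$ first. By Corollary \ref{JONESWENZL:corollarytrace} this gives $\mathit{tr}_1(f_n) = \tfrac{\Delta_n}{\Delta_{n-1}} f_{n-1}$, an element still carrying $n-1$ open strands. The remaining trivial closure of those $n-1$ strands is precisely the operation defining $tr(f_{n-1})$, and applying it to both sides of the previous equality — legitimate because it is an $R$-linear map through which the scalar $\tfrac{\Delta_n}{\Delta_{n-1}}$ passes — yields
\[
tr(f_n) = \frac{\Delta_n}{\Delta_{n-1}}\, tr(f_{n-1}).
\]
Invoking the inductive hypothesis $tr(f_{n-1}) = \Delta_{n-1}$ then produces $tr(f_n) = \tfrac{\Delta_n}{\Delta_{n-1}} \cdot \Delta_{n-1} = \Delta_n$, closing the induction. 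Equivalently, one may telescope the whole product $\prod_{k=1}^{n} \tfrac{\Delta_k}{\Delta_{k-1}}$ at once, using $\Delta_0 = 1$ and $tr(f_0)=1$.

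The one point requiring genuine care — the main obstacle — is the justification that the full trivial closure really does decompose as the nested composition of single-strand closures, and that the identity $\mathit{tr}_1(f_n) = \tfrac{\Delta_n}{\Delta_{n-1}} f_{n-1}$ may be substituted underneath the remaining closure. Here I would be explicit that the closure maps are defined on the relative skein modules with the correct number of marked points at each stage, and that substituting a skein-equivalent diagram into a fixed outer closure tangle preserves the skein class, so the equality propagates through the remaining $n-1$ closures. Once this functoriality of closure is granted, the remainder is the one-line telescoping induction above.
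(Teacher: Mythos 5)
Your proof is correct, and it is essentially the argument the paper intends: the statement is cited from \cite{LicCalc} without proof, placed immediately after Corollary \ref{JONESWENZL:corollarytrace}, and your derivation --- decomposing the trivial closure into nested single-strand closures, applying $\mathit{tr}_1(f_n)=\tfrac{\Delta_n}{\Delta_{n-1}}f_{n-1}$ at each stage, and telescoping the scalars down to the base case $\Delta_0=1$ (equivalently $\mathit{tr}(f_1)=d=\Delta_1$) --- is exactly the standard route through that corollary. Your care about the functoriality point (that a skein identity may be substituted inside the fixed outer closure tangle, since closure is an $R$-linear map on the relative skein module) is well placed and correctly resolved.
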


\begin{corollary}\label{Cor:chebyshev}\cite{LicCalc} Let $tr_{\mathit{Ann}}(f_n)$ denote the annular trace (closure) of $f_n$ obtained by identifying the left point to the right points in the annulus and let $S_{n}(z)$ denote the $n^{th}$ Chebyshev polynomial of the second kind and $z$ denote the homotopically non-trivial curve in the annulus. Then
$$\mathit{tr}_{Ann}(f_n) = S_{n}(z). $$
\end{corollary}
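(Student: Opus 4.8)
The plan is to prove the identity by induction on $n$ using Wenzl's recursion (Theorem \ref{JONESWENZL:wenzltheorem}) together with the partial-trace formula of Corollary \ref{JONESWENZL:corollarytrace}. Recall that the skein algebra of the annulus is the polynomial algebra $R[z]$ on the core curve $z$ (Corollary \ref{rkbsmcor}(1) with $n=0$), and that the annular closure defines an $R$-linear map $\mathit{tr}_{Ann}\colon TL_n \to R[z]$ which is cyclic, i.e. $\mathit{tr}_{Ann}(XY)=\mathit{tr}_{Ann}(YX)$, and which sends a straight strand closed off parallel to (and unlinked from) a tangle $T$ to $z\cdot \mathit{tr}_{Ann}(T)$. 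Writing $S_k := \mathit{tr}_{Ann}(f_k)$, the goal is to show that $S_k$ obeys the Chebyshev recursion $S_n = z S_{n-1} - S_{n-2}$ with $S_0 = 1$ and $S_1 = z$; since the Chebyshev polynomials of the second kind are determined by exactly these data, the claim then follows.

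The base cases are immediate: $f_0$ is the empty tangle, whose closure is the empty link, so $S_0 = 1 = S_0(z)$, and $f_1$ is a single strand whose annular closure is the core curve, so $S_1 = z = S_1(z)$. For the inductive step I would apply Wenzl's recursion, written algebraically as $f_n = (f_{n-1}\otimes 1) - \frac{\Delta_{n-2}}{\Delta_{n-1}}\,(f_{n-1}\otimes 1)\,e_{n-1}\,(f_{n-1}\otimes 1)$ (the $n$-tangle form of Equation \ref{JONESWENZL:wenzlrecursion}, where $f_{n-1}\otimes 1$ denotes $f_{n-1}$ on the first $n-1$ strands beside a parallel $n$th strand), and take $\mathit{tr}_{Ann}$ of both sides. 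The first term is handled by multiplicativity of the parallel strand: since the extra $n$th strand in $f_{n-1}\otimes 1$ closes to a core curve concentric with and unlinked from the closure of $f_{n-1}$, one gets $\mathit{tr}_{Ann}(f_{n-1}\otimes 1) = z\,S_{n-1}$.

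The crux is the second term. Using cyclicity of $\mathit{tr}_{Ann}$ and the idempotence $f_{n-1}^2 = f_{n-1}$ (Lemma \ref{JONESWENZL:thmfnbasis}(c)), I would first reduce $\mathit{tr}_{Ann}\big((f_{n-1}\otimes 1)e_{n-1}(f_{n-1}\otimes 1)\big)$ to $\mathit{tr}_{Ann}\big((f_{n-1}\otimes 1)e_{n-1}\big)$. The key geometric observation is then that, after closing all strands in the annulus, the cap--cup of $e_{n-1}$ together with the auxiliary $n$th strand and the two closure arcs at positions $n-1$ and $n$ join the top and bottom of the last wire of $f_{n-1}$ by an arc of net winding number zero about the core, while the remaining wires of $f_{n-1}$ wind once in the usual way. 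Hence this connection is isotopic to the planar partial closure $\mathit{tr}_1$ of that last wire, so $\mathit{tr}_{Ann}\big((f_{n-1}\otimes 1)e_{n-1}\big)$ equals the annular closure of the $(n-2)$-tangle $\mathit{tr}_1(f_{n-1}) = \frac{\Delta_{n-1}}{\Delta_{n-2}} f_{n-2}$ supplied by Corollary \ref{JONESWENZL:corollarytrace}, namely $\frac{\Delta_{n-1}}{\Delta_{n-2}} S_{n-2}$.

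Combining the two terms yields $S_n = z S_{n-1} - \frac{\Delta_{n-2}}{\Delta_{n-1}}\cdot \frac{\Delta_{n-1}}{\Delta_{n-2}} S_{n-2} = z S_{n-1} - S_{n-2}$, which is exactly the defining recursion for $S_n(z)$; with the base cases this completes the induction. The step I expect to be the main obstacle is the winding-number bookkeeping in the second term: one must argue carefully that the loop produced by $e_{n-1}$ and the auxiliary strand carries net winding zero (so that it reduces to the planar $\mathit{tr}_1$ rather than contributing an extra factor of $z$), and that no spurious trivial component of value $d$ is created. A small check at $n=2$, where the recursion gives $\mathit{tr}_{Ann}(f_2) = z^2 - 1$ while a direct computation from $f_2 = 1 - \frac{1}{d}e_1$ gives $z^2 - \frac{1}{d}\,\mathit{tr}_{Ann}(e_1) = z^2 - 1$, confirms that the coefficients are correctly tracked.
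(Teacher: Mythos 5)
The paper gives no proof of this corollary---it is quoted from Lickorish \cite{LicCalc}---so the comparison is with the standard argument in that reference, which is exactly the route you take: induction via Wenzl's recursion, cyclicity of the annular closure plus idempotence to reduce the second term, and the partial-trace formula of Corollary \ref{JONESWENZL:corollarytrace} to obtain the Chebyshev recursion $S_n = zS_{n-1}-S_{n-2}$. Your proof is correct, including the key geometric point you flag: the two outermost closure arcs in the $e_{n-1}$ term are traversed with opposite winding and are adjacent, so that detour pulls back to the planar partial closure $\mathit{tr}_1(f_{n-1})$ with no extra factor of $z$ or $d$.
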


\begin{lemma} \label{JONESWENZL:lemmacircle} \cite{LicCalc}
$$ \vcenter{\hbox{
\begin{overpic}[scale = .35]{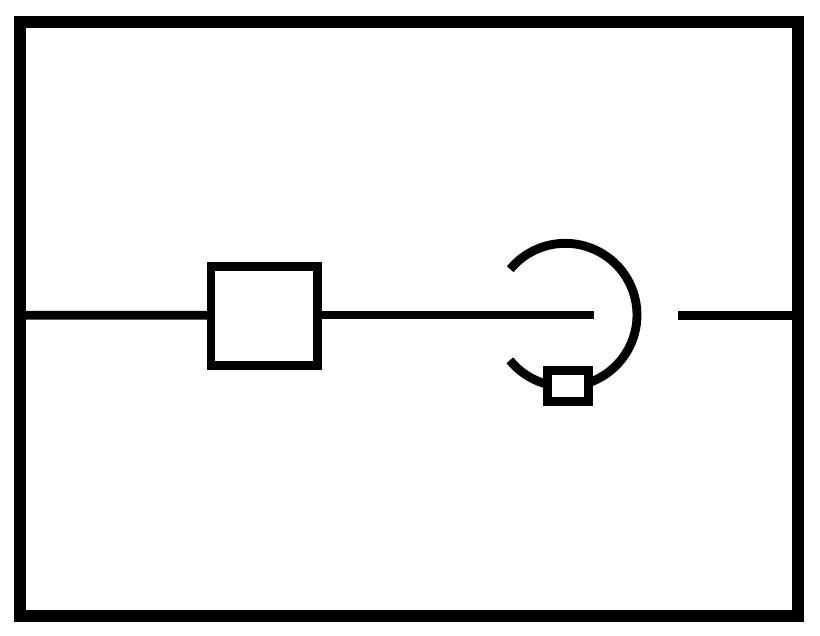}
\put(67, 35){ $b$}
\put(52, 43){ $a$}
\end{overpic} }} = \varsigma \vcenter{\hbox{
\begin{overpic}[scale = .35]{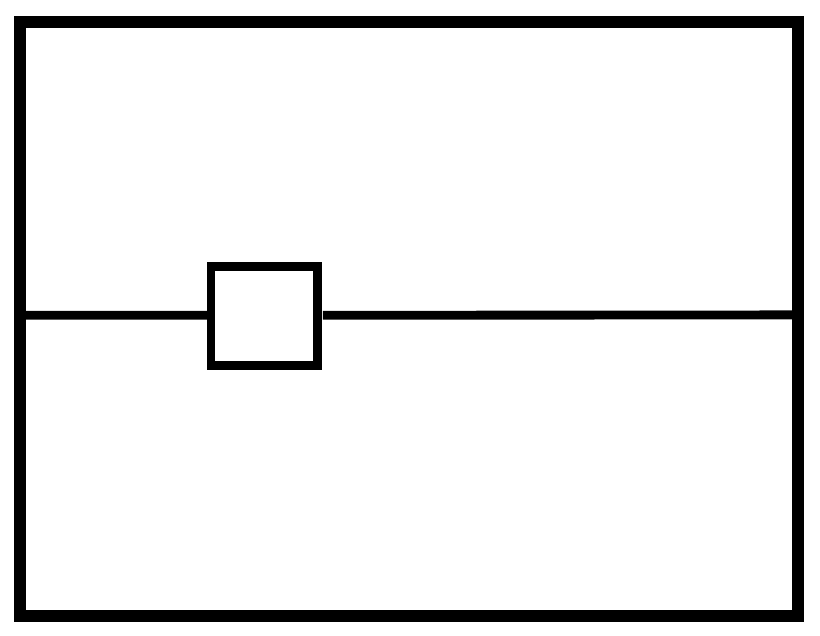}
\put(50, 35){ $b$}
\end{overpic} }},$$
where $\varsigma = \frac{(-1)^a\left( A^{2(b+1)(a+1)} - A^{-2(b+1)(a+1) }\right)}{A^{2(b+1)} - A^{-2(b+1)}}$.
\end{lemma}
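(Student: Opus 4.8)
\emph{Proof proposal.} The plan is to read the left-hand side as the annular closure of $f_a$ (the ``$a$-colored loop'') encircling a strand carrying $f_b$ (the ``$b$-colored strand''), and to collapse the whole picture to a single scalar $\varsigma \in \mathbb{Q}(A)$ multiplying the $b$-colored strand. First I would argue that the result must be proportional to the $b$-strand: any $(b,b)$-tangle $T$ satisfies $f_b\, T\, f_b = c\, f_b$ for some $c \in \mathbb{Q}(A)$, since expanding $T$ into crossingless tangles and discarding every summand containing some $e_i$ (each annihilated by $f_b$ by Lemma \ref{JONESWENZL:thmfnbasis}(b)) leaves only a multiple of the identity tangle. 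Sliding the encircling loop into a neighborhood of the strand exhibits it as such a $T$ sandwiched between two copies of $f_b = f_b f_b$, so the left-hand side equals $\varsigma \cdot (\text{strand } b)$, and it remains to evaluate $\varsigma$.

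Next I would compute $\varsigma$ by exploiting the annular skein structure. By Corollary \ref{Cor:chebyshev} the $a$-colored loop is exactly $S_a(z)$, where $z$ is the core curve of the annulus carrying the loop; once that annulus is placed around the $b$-strand, $z$ becomes a meridian of the strand. Encircling $f_b$ defines an $R$-algebra homomorphism from the annular skein algebra $R[z]$ into $\mathbb{Q}(A)$: working from the inside out, $k$ concentric meridians act as $\mu_b^k$, because each meridian sees only a strand colored $b$ (the inner meridians having already collapsed to scalars) and contributes the same single-loop eigenvalue $\mu_b$. Hence $z \mapsto \mu_b$ and $\varsigma = S_a(\mu_b)$, which reduces the problem to (i) the base eigenvalue $\mu_b$ and (ii) an evaluation of the Chebyshev polynomial.

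For (i), I would show $\mu_b = -\bigl(A^{2(b+1)} + A^{-2(b+1)}\bigr)$ by induction on $b$ using Wenzl's recursion (Theorem \ref{JONESWENZL:wenzltheorem}): the base case $b=0$ is a disjoint unknot, so $\mu_0 = -A^2 - A^{-2}$, and in the inductive step the $e_{b-1}$-term of $f_b$ is killed upon encircling by Lemma \ref{JONESWENZL:thmfnbasis}(b), leaving only the $(f_{b-1}\otimes 1)$-term and the expected recursion for the eigenvalue; alternatively $\mu_b$ is the $a=1$ specialization and may be read off from the colored Hopf link. For (ii), I would use the parity $S_a(-x) = (-1)^a S_a(x)$ together with the closed form $S_a(u + u^{-1}) = \frac{u^{a+1} - u^{-(a+1)}}{u - u^{-1}}$; writing $u = A^{2(b+1)}$ so that $\mu_b = -(u + u^{-1})$ yields
\[
\varsigma = S_a(\mu_b) = (-1)^a\,\frac{A^{2(b+1)(a+1)} - A^{-2(b+1)(a+1)}}{A^{2(b+1)} - A^{-2(b+1)}},
\]
which is the claimed value; the checks $\varsigma = 1$ at $a=0$ and $\varsigma = \mu_b$ at $a=1$ both hold.

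I expect the main obstacle to be the base eigenvalue $\mu_b$ together with the clean justification of the reduction to a scalar, rather than the Chebyshev bookkeeping, which is routine. Establishing $f_b\, T\, f_b \in \mathbb{Q}(A)\, f_b$ and that nested meridians multiply their eigenvalues (the homomorphism property) must be done carefully, and the inductive computation of $\mu_b$ via Wenzl's recursion requires tracking the reconnection of strands and the $\frac{\Delta_{b-2}}{\Delta_{b-1}}$ coefficient; once $\mu_b$ is in hand, substitution into $S_a$ completes the proof.
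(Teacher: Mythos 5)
The first thing to say is that the paper itself contains no proof of Lemma \ref{JONESWENZL:lemmacircle}: it is quoted from Lickorish \cite{LicCalc}, so your attempt can only be measured against the standard argument given there. Your overall architecture coincides with that standard argument: (i) the absorption argument, namely that $f_b\,T\,f_b = c\,f_b$ because every non-identity Temperley--Lieb basis diagram contains some $e_i$, which is annihilated by Lemma \ref{JONESWENZL:thmfnbasis}(b); (ii) the reduction, via Corollary \ref{Cor:chebyshev} and the multiplicativity of nested meridians, of the scalar to $S_a(\mu_b)$, where $\mu_b$ is the eigenvalue of a \emph{single} loop encircling the $b$-strand; and (iii) the Chebyshev evaluation $S_a\bigl(-(u+u^{-1})\bigr) = (-1)^a\frac{u^{a+1}-u^{-(a+1)}}{u-u^{-1}}$ at $u = A^{2(b+1)}$. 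These three steps are correct as you present them, and your sanity checks at $a=0$ and $a=1$ are right.

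The genuine gap is the computation of $\mu_b$, which is the entire quantitative content of the lemma, and your proposed derivation of it does not work. Writing Wenzl's recursion as $f_b = (f_{b-1}\otimes 1) - \frac{\Delta_{b-2}}{\Delta_{b-1}}(f_{b-1}\otimes 1)\,e_{b-1}\,(f_{b-1}\otimes 1)$, the second term is \emph{not} ``killed upon encircling'': Lemma \ref{JONESWENZL:thmfnbasis}(b) annihilates $e_i$ only against an adjacent $f_b$, whereas here $e_{b-1}$ is flanked by $f_{b-1}\otimes 1$, and $(f_{b-1}\otimes 1)e_{b-1}(f_{b-1}\otimes 1)\neq 0$ --- it is precisely Wenzl's correction term; an external meridian has no algebraic interaction with it. Moreover, even if that term did vanish, the surviving term is a meridian around all $b$ strands of which only $b-1$ carry an idempotent, which is not $\mu_{b-1}(f_{b-1}\otimes 1)$, so no recursion closes. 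Taken literally, your scheme would make $\mu_b$ independent of $b$, forcing $\mu_b = \mu_0 = -A^2-A^{-2}$, which is already false at $b=1$: a direct four-state Kauffman bracket computation for a loop encircling one strand gives $\mu_1 = -A^4-A^{-4}$. Your fallback --- reading $\mu_b$ off the colored Hopf link --- is circular, since colored Hopf link evaluations (for instance the corollary immediately following the lemma in this paper) are themselves deduced from this lemma. What is actually required, and what Lickorish does in \cite{LicCalc}, is a direct diagrammatic computation: expand the $2b$ crossings of the single encircling loop with the cable via the Kauffman bracket relation and use the turnback-killing property of $f_b$ to discard all but a few states, obtaining $\mu_b = -A^{2(b+1)}-A^{-2(b+1)}$. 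With that established, your steps (i)--(iii) do complete the proof; without it, the proof is incomplete at its crux.
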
 

The following result is a well known corollary to Lemma \ref{JONESWENZL:lemmacircle}, we will see similar corollaries in Section \ref{JWMB} for elements in the twisted $I$-bundle over the M\"obius band. 
\begin{corollary}\cite{LicCalc}
\begin{equation}
\vcenter{\hbox{
\begin{overpic}[scale=.3]{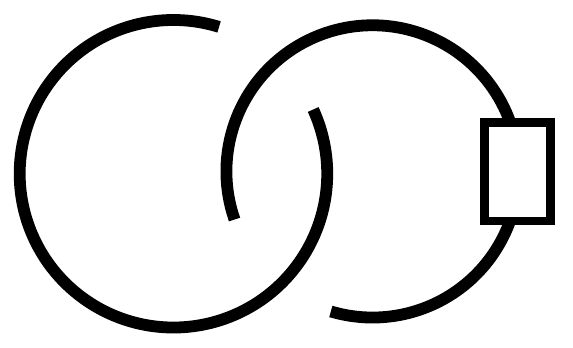}
\put(-5,0){\fontsize{9}{9}$m$}
\put(45,0){\fontsize{9}{9}$k$}
\end{overpic}}} = (-A^{2(k+1)}-A^{-2(k+1)})^m \Delta_k = ((-1)^{k}T_{k+1})^m \Delta_k,
\end{equation}
where $d = -A^2-A^{-2}$ and $T_{n}(d) = (-1)^n(A^{2n}+A^{-2n})$ is the $n^{th}$ Chebyshev polynomial of the first kind.
\end{corollary}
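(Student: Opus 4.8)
The plan is to read the diagram in \texttt{Hopffn1.pdf} as the closure of the $k$-th Jones--Wenzl idempotent $f_k$ together with $m$ parallel meridional circles, each an unknot encircling the $k$-strand cable exactly once; the labels $m$ and $k$ record the number of such circles and the color of the core loop, respectively. Since each meridian is a single strand, the natural tool is Lemma \ref{JONESWENZL:lemmacircle} in the special case $a=1$, $b=k$, which describes precisely what happens when one unknotted circle surrounds $f_k$.

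First I would evaluate the coefficient $\varsigma$ from Lemma \ref{JONESWENZL:lemmacircle} at $a=1$, $b=k$. Writing $X=A^{2(k+1)}$, the numerator becomes $-(X^2-X^{-2})$ and the denominator $X-X^{-1}$, so after factoring the difference of squares $X^2-X^{-2}=(X-X^{-1})(X+X^{-1})$ the quotient collapses to
$$\varsigma = -\left(A^{2(k+1)}+A^{-2(k+1)}\right) = -A^{2(k+1)}-A^{-2(k+1)}.$$
Thus a single circle around $f_k$ returns $\varsigma$ times $f_k$, with the idempotent reproduced.

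Next I would remove the $m$ meridians one at a time. Because each circle surrounds the $k$-cable in its own ball, Lemma \ref{JONESWENZL:lemmacircle} applies locally to any one of them: absorbing a single circle multiplies the diagram by $\varsigma$ and returns the same picture with one fewer meridian, the core $f_k$ being recovered intact. Iterating $m$ times---formally an induction on $m$---accumulates the factor $\varsigma^m=(-A^{2(k+1)}-A^{-2(k+1)})^m$ and leaves the bare closure of $f_k$. By the corollary $tr(f_n)=\Delta_n$, this closure equals $\Delta_k$, which yields the first claimed equality.

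Finally, the second equality is pure bookkeeping: substituting $n=k+1$ into the closed form $T_n(d)=(-1)^n(A^{2n}+A^{-2n})$ gives $(-1)^k T_{k+1}=-(A^{2(k+1)}+A^{-2(k+1)})$, which is exactly $\varsigma$, so $\varsigma^m=((-1)^k T_{k+1})^m$. I expect the only real care-point to be the justification that the meridians may be absorbed independently---that is, that Lemma \ref{JONESWENZL:lemmacircle} is genuinely a local skein identity applicable to one circle at a time while leaving $f_k$ undisturbed; the coefficient computation and the closing Chebyshev substitution are routine.
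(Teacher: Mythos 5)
Your proof is correct and takes essentially the route the paper intends: the paper presents this as a direct consequence of Lemma \ref{JONESWENZL:lemmacircle} (and proves its M\"obius-band analog in exactly this way), removing the $m$ meridional circles one at a time via the lemma with $a=1$, $b=k$ and evaluating the remaining closure as $\Delta_k$. Your computation of $\varsigma=-A^{2(k+1)}-A^{-2(k+1)}$ and the identification $\varsigma=(-1)^kT_{k+1}(d)$ are both accurate.
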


%%%%%%%%%%%%%%%%%%%%%%%%%%%%%%%%%%%%%%%%%%%%%5

\section{Crossingless connection in the M\"obius band}\label{MBcrossingless}

This section serves as an overview on curves and arcs in the M\"obius band described by two different models. For more information we refer the reader to \cite{Lic2, Lic3}.
Throughout this paper we will mainly use the crosscap model of the M\"obius band where the boundary will be given in a rectangular form when marked points are included,  as shown in Figure \ref{Mobiusmodelscc2}, otherwise it will be displayed as a smooth circle as shown in Figure \ref{Mobiusmodel2}. \\

The three homotopically distinct arcs fixed on the boundary of the M\"obius band are given in Figure \ref{Mobiusmodelscc}.  In order to relate the arcs from the first and second model, a convention was chosen on the two distinct arcs fixed on the boundary that do not intersect the crosscap. 

\begin{figure}[ht] 
\centering
\begin{subfigure}{.45\textwidth}
\centering
$\vcenter{\hbox{\begin{overpic}[scale=.8]{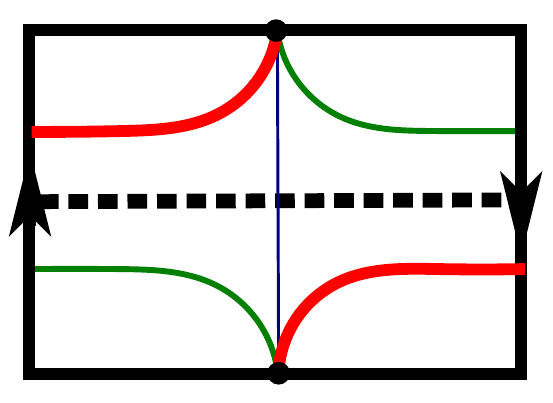}
%\put(10, 39){\fontsize{9}{9}$n$}
%\put(36, 29){$F_n$}
\end{overpic}}}$
\caption{Formed from $[0,1] \times [0,1]$ by identifying $\{0\} \times [0,1]$ with $\{1\} \times [0,1]$ as shown by the arrows.}
\label{mobiusmodelscc1}
\end{subfigure} \quad
\begin{subfigure}{.45\textwidth}
\centering
$\vcenter{\hbox{\begin{overpic}[scale=.8]{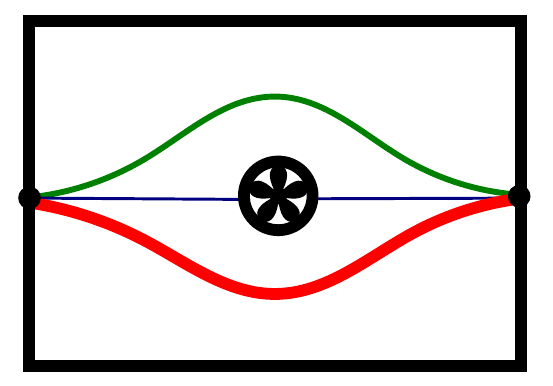}
%\put(10, 39){\fontsize{9}{9}$n$}
%\put(36, 29){$F_n$}
\end{overpic}}}$
\caption{Consists of a crosscap and highlights the boundary of the M\"obius band.}
\label{Mobiusmodelscc2}
\end{subfigure}
\caption{Two models of the M\"obius band with 3 homotopically distinct arcs fixed on the boundary.}
\label{Mobiusmodelscc}
\end{figure}

\begin{figure}[ht] 
\centering
\begin{subfigure}{.45\textwidth}
\centering
$\vcenter{\hbox{\begin{overpic}[scale=.8]{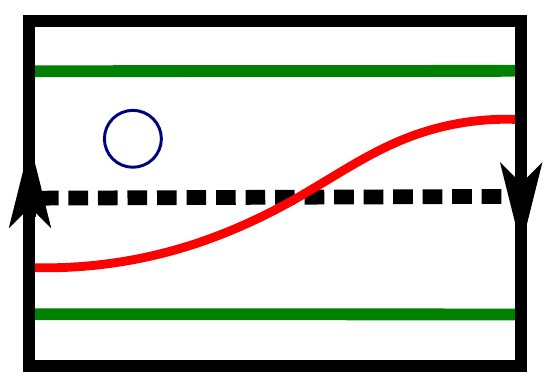}
\put(16, 55){\fontsize{9}{9}$d$}
\put(60, 77){\fontsize{9}{9}$z$}
\put(55, 30){\fontsize{9}{9}$x$}
\end{overpic}}}$
\caption{First model.}
\label{mobiusmodel1}
\end{subfigure} \quad
\begin{subfigure}{.45\textwidth}
\centering
$\vcenter{\hbox{\begin{overpic}[scale=.8]{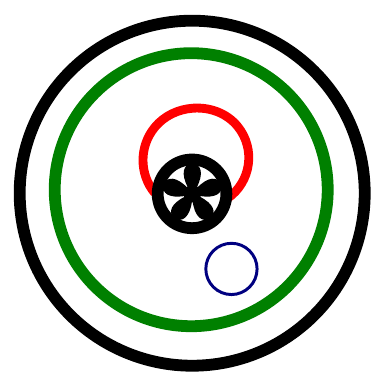}
\put(58, 32){\fontsize{9}{9}$d$}
\put(22, 60){\fontsize{9}{9}$z$}
\put(43, 57){\fontsize{9}{9}$x$}
\end{overpic}}}$
\caption{Second model.}
\label{Mobiusmodel2}
\end{subfigure}
\caption{Two models of the M\"obius band with 3 homotopically distinct simple closed curves in M\"obius band denoted by $d, x,$ and $z$, respectively.}
\label{Mobiusmodel}
\end{figure}

Figure \ref{Mobiusmodel} pictorially describes the three homotopically distinct simple closed curves in the M\"obius band. If a simple closed curve intersects the crosscap more than once then the number of intersection points can be reduced by two at a time. The following example will illustrate the process of removing two intersection points from the crosscap. Similar moves can be applied to arcs attached to the boundary that intersect the crosscap more than once. 

\begin{example}
We will illustrate,  in the first model then the second model,  the removal of two intersection points of the crosscap from a simple closed curve. In the two examples, the curve will be multicolored in order to show which portion of the curve passes through the crosscap. 

 Suppose we have a simple closed curve that is homotopically trivial and intersects the crosscap twice then, as shown in Equation \ref{FirstModel2ptsd}, we may use one isotopy move to remove the two intersection points.

\begin{equation}\label{FirstModel2ptsd}
\vcenter{\hbox{\begin{overpic}[scale=.4]{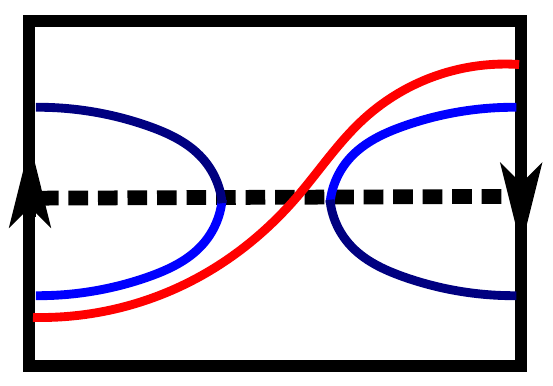}
\end{overpic}}} \sim \vcenter{\hbox{\begin{overpic}[scale=.4]{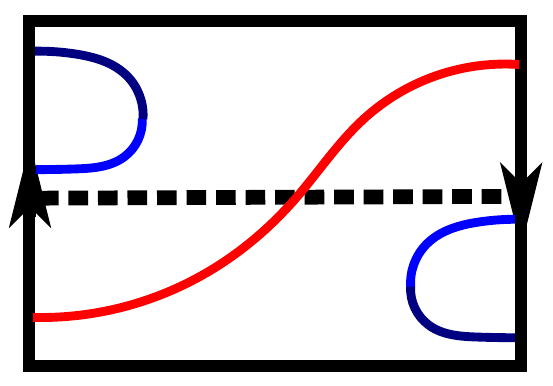}
\end{overpic}}}  \sim  \vcenter{\hbox{\begin{overpic}[scale=.4]{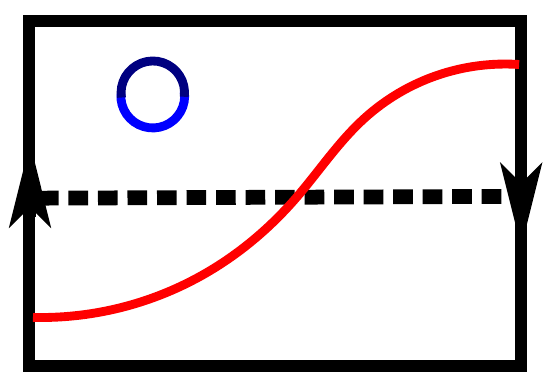} \end{overpic}}}.
\end{equation}

In Equation \ref{SecModel2ptsd}, we will illustrate the removal of the same intersection points presented in the second model.

\begin{equation}\label{SecModel2ptsd}
\vcenter{\hbox{\begin{overpic}[scale=.5]{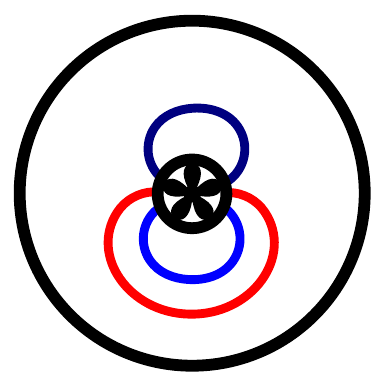}
\end{overpic}}} \sim \vcenter{\hbox{\begin{overpic}[scale=.5]{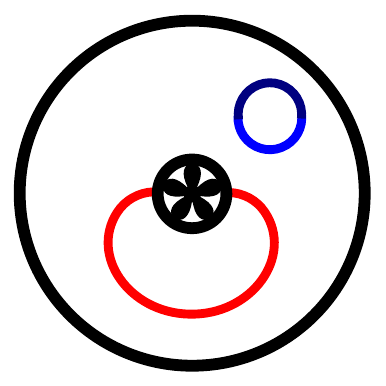}
\end{overpic}}}  .
\end{equation}

Now, suppose we have a homotopically non-trivial curve that intersects the crosscap twice, for example the curve illustrated in Equation \ref{FirstModel2ptsz}. Then, we may remove the two intersection points by using one isotopy move as given below.
\begin{equation}\label{FirstModel2ptsz}
\vcenter{\hbox{\begin{overpic}[scale=.4]{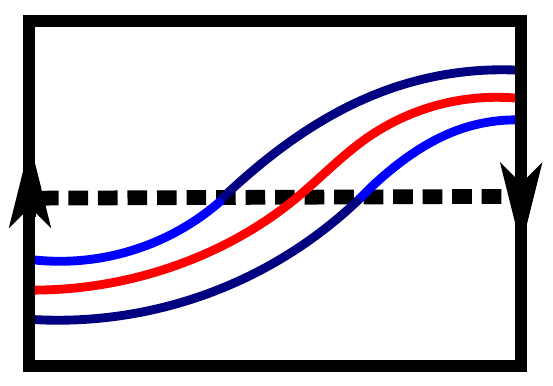}
\end{overpic}}} \sim \vcenter{\hbox{\begin{overpic}[scale=.4]{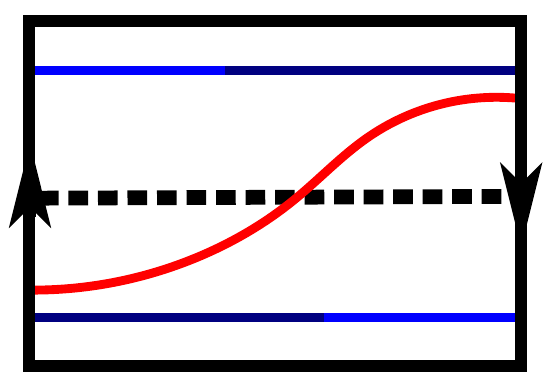}
\end{overpic}}}.
\end{equation}

Equation \ref{SecModel2ptsz} gives an illustration of this move in the second model. 

\begin{equation}\label{SecModel2ptsz}
\vcenter{\hbox{\begin{overpic}[scale=.5]{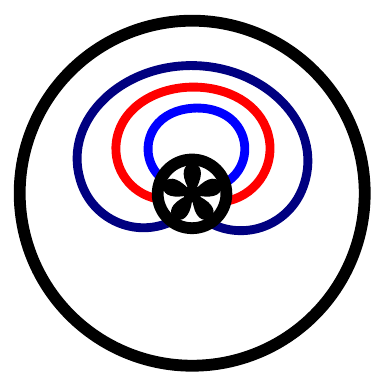}
\end{overpic}}} \sim 
\vcenter{\hbox{\begin{overpic}[scale=.5]{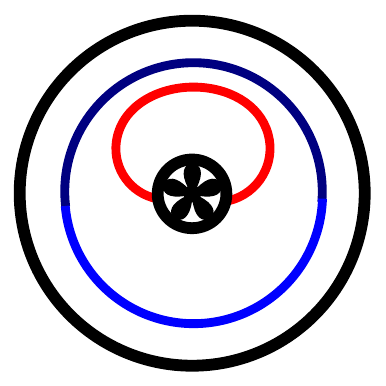}
\end{overpic}}}.
\end{equation}

\end{example}

%%%%%%%%%%%%%%%%%%%%%%%%%%%%%%%%%%%%%%%%%%%%%%%%%%%%%%%%%%%%%%%%%%%%%%%
	
	\section{Jones-Wenzl idempotents in a module on the M\"obius band}\label{JWMB}
In this section we will introduce various properties associated to the Jones-Wenzl idempotents in the twisted $I$-bundle over the M\"obius band.

\begin{figure}[ht] 
\centering
$\vcenter{\hbox{\begin{overpic}[scale=.4]{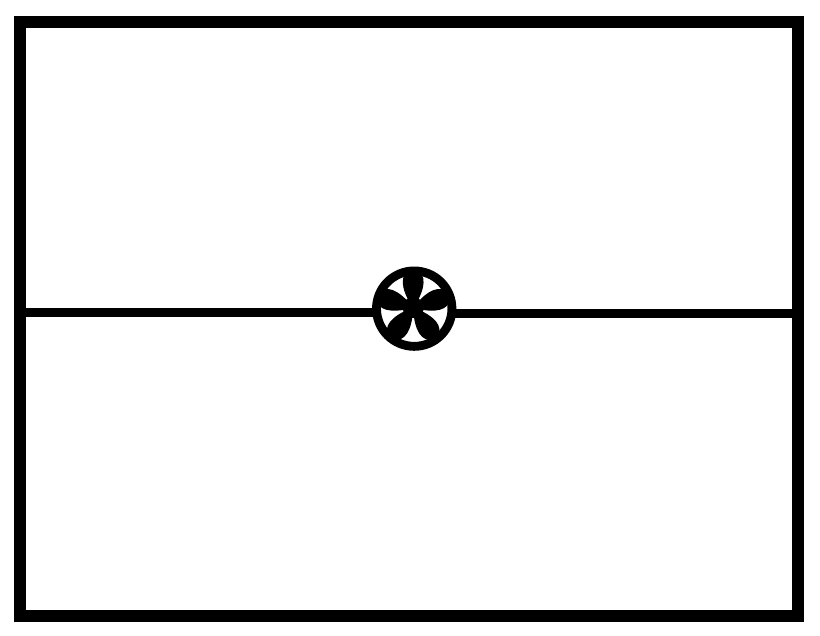}
\put(10, 39){\fontsize{9}{9}$n$}
%\put(36, 29){$F_n$}
\end{overpic}}}$
\caption{Illustration of the unique element in $Mb_n$ with $n$ arcs intersecting the crosscap, denoted by $1_{Mb_n}$.}
\label{Mobius:mn}
\end{figure}

 We start by defining a module over the Temperley-Lieb algebra generated by the basis of $\mathcal{S}_{2, \infty}(Mb \hat{\times} I, 2n)$. This module is defined in the obvious way that one would define a module over $TL_n$ generated by the basis of the RKBSM of $F \times I$ (or $F \hat{\times} I$). While trivially defined, it serves as a foundation to  describe properties of the basis $\{ e_i \}$  of $TL_n$ when juxtaposition with crossingless connections in the M\"obius band. It also serves as a first step to define an algebra from unorientable surfaces with non-empty boundary. 
 
\begin{definition}\label{TLnmodule}
Consider the basis of $\mathcal{S}_{2, \infty}(Mb \hat{\times} I, 2n)$, denoted by $\mathcal{B}(Mb \hat{\times} I, n)$ and define left multiplication 
$$\cdot : TL_n \times \mathcal{S}_{2, \infty}(Mb \hat{\times} I, 2n) \to \mathcal{S}_{2, \infty}(Mb \hat{\times} I, 2n),$$
 on the basis as follows:

For $e_i \in TL_n$ and $m \in \mathcal{B}(Mb \hat{\times} I, n)$ define $e_i \cdot m$ by a side-by-side juxtaposition where the right boundary interval of $e_i$ is identified with the left boundary interval of $m$ and the points on the intervals are trivially identified, as shown in Figure \ref{Mobius:Multiplication}. 

This yields a left Temperley-Lieb algebra-module with basis $ \mathcal{B}(Mb \hat{\times} I, n)$ called the \textbf{left $TL_n$-module over the M\"obius band}. A right $TL_n$-module over the M\"obius band is defined similarly.
\end{definition}

 Consider a subset of the basis of $\mathcal{S}_{2, \infty}(Mb \hat{\times} I, 2n)$ containing crossingless connections with no $z$ or $d$ curves, denoted by $Mb_n$. The following lemma details a relationship between the left and right  $TL_n$-module over the M\"obius band on elements of $Mb_n$.

 \begin{figure}[ht]
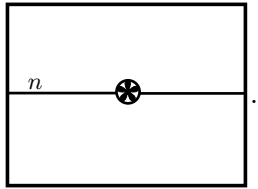
 
\centering
$\vcenter{\hbox{\begin{overpic}[scale=.4]{Mbfn2.pdf}
\put(10, 39){\fontsize{9}{9}$n$}
%\put(36, 29){$F_n$}
\end{overpic}}}$.
\caption{An illustration of the unique element in $Mb_n$ with $n$ arcs intersecting the crosscap, denoted by $1_{Mb_n}$.}
\label{Mobius:mn}
\end{figure}

\begin{figure}[ht]
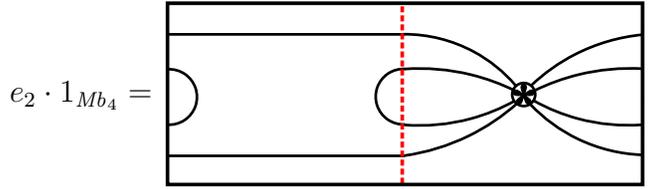
 
\centering
$e_2 \cdot 1_{\mathit{Mb}_4} = \vcenter{\hbox{\begin{overpic}[scale=.4]{Mbfn9.pdf}
\end{overpic}}}$.
\caption{An illustration of left multiplication.}
\label{Mobius:Multiplication}
\end{figure}

\begin{lemma}\label{Lemma:eipull} Let $(Mb_n)_k$ denote the set of elements of $Mb_n$ that intersect the crosscap $k$ times, and let $1_{Mb_n}$ denote the unique element in $Mb_n$ that intersect the crosscap $n$ times, then
$$1_{Mb_n} e_i = e_{n-i} 1_{Mb_n} \in (Mb_n)_{n-2}.$$
\end{lemma}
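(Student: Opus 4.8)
The plan is to work entirely diagrammatically in the crosscap model of the Möbius band, using the antipodal reversal property of the crosscap established in Section~\ref{MBcrossingless}. First I would fix the picture of $1_{Mb_n}$: it consists of $n$ parallel arcs each meeting the crosscap exactly once, so that the $n$ points of the left boundary interval are joined through the crosscap to the $n$ points of the right boundary interval. The essential input is that passing $n$ parallel strands through the crosscap reverses their left-to-right order; concretely, the $j$-th point on the left is connected to the $(n+1-j)$-th point on the right. I would record this as an order-reversing involution $\sigma(j)=n+1-j$ on $\{1,\dots,n\}$, which is exactly the content of the antipodal property of the crosscap.

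With this in hand I would compute $1_{Mb_n}e_i$ directly. Attaching $e_i$ on the right adds a cap joining the right-boundary points $i$ and $i+1$ of $1_{Mb_n}$, together with a cup on the new right boundary at positions $i,i+1$. Tracing through the reversal, the two capped through-strands emanate from the left-boundary points $\sigma^{-1}(i)=n+1-i$ and $\sigma^{-1}(i+1)=n-i$, so the cap closes these two strands into a single arc that meets the crosscap twice. Applying the isotopy move that removes two crosscap intersections (the Example of Section~\ref{MBcrossingless}), this arc simplifies to a cap joining the left-boundary points $n-i$ and $n-i+1$ without meeting the crosscap. The result is therefore the diagram carrying an arc on the left at positions $\{n-i,n-i+1\}$, a cup on the right at positions $\{i,i+1\}$, and $n-2$ remaining through-strands.

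I would then perform the symmetric computation for $e_{n-i}1_{Mb_n}$: attaching $e_{n-i}$ on the left caps the left-boundary points $n-i,n-i+1$ of $1_{Mb_n}$, and tracing through $\sigma$ shows these connect to the right-boundary points $i+1,i$; the same isotopy move collapses the doubly-intersecting arc to a cap on the right at positions $\{i,i+1\}$, leaving a cup on the new left boundary at $\{n-i,n-i+1\}$ and the same $n-2$ through-strands. Comparing the two outputs strand by strand shows they coincide, establishing $1_{Mb_n}e_i=e_{n-i}1_{Mb_n}$. Finally, since exactly two of the $n$ through-strands are consumed and no closed component (hence no $z$ or $d$ curve) is created, the common value has precisely $n-2$ arcs meeting the crosscap and lies in $(Mb_n)_{n-2}$.

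The step I expect to be the main obstacle is pinning down the reversal $\sigma(j)=n+1-j$ rigorously from the antipodal identification, and justifying that the arc meeting the crosscap twice genuinely reduces to a single boundary cap via the Section~\ref{MBcrossingless} move \emph{without} producing a trivial loop or a boundary-parallel $z$-curve; this is what guarantees the product remains inside $Mb_n$ and lands in the correct stratum $(Mb_n)_{n-2}$. Once the reversal is fixed, the index bookkeeping $i\mapsto n-i$ and the identification of the two diagrams are routine.
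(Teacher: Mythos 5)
Your proof is correct and takes essentially the same approach as the paper: the paper's own proof is exactly this diagrammatic computation, using the antipodal (order-reversing) property of the crosscap together with the move removing two crosscap intersections to show that both $1_{Mb_n}e_i$ and $e_{n-i}1_{Mb_n}$ equal the same crossingless diagram with a boundary arc at positions $n-i,n-i+1$ on the left, one at $i,i+1$ on the right, and $n-2$ strands through the crosscap. The only difference is presentational: the paper records this as two pictorial equations, while you spell out the index bookkeeping in prose.
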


\begin{proof}
This is a direct result from the antipodal property of the crosscap that is explained in Section \ref{MBcrossingless} and illustrated in Equations \ref{lemma:mbei} and \ref{lemma:e1mb}.

\begin{equation}\label{lemma:mbei}
1_{Mb_n} e_i = \vcenter{\hbox{\begin{overpic}[scale=.35]{Mbfn6.pdf}
\put(4, 54){\fontsize{8}{8}$n-i-1$}
\put(62, 54){\fontsize{8}{8}$i-1$}
\put(46, 5){\fontsize{8}{8}$n-i-1$}
\put(4, 5){\fontsize{8}{8}$i-1$}
\end{overpic}}} = \vcenter{\hbox{\begin{overpic}[scale=.35]{Mbfn7.pdf}
\put(4, 54){\fontsize{8}{8}$n-i-1$}
\put(62, 54){\fontsize{8}{8}$i-1$}
\put(46, 5){\fontsize{8}{8}$n-i-1$}
\put(4, 5){\fontsize{8}{8}$i-1$}
\end{overpic}}} \in  (Mb_n)_{n-2}.
\end{equation}
\begin{equation}\label{lemma:e1mb}
e_{n-i} 1_{Mb_n}  = \vcenter{\hbox{\begin{overpic}[scale=.35]{Mbfn8.pdf}
\put(4, 54){\fontsize{8}{8}$n-i-1$}
\put(62, 54){\fontsize{8}{8}$i-1$}
\put(46, 5){\fontsize{8}{8}$n-i-1$}
\put(4, 5){\fontsize{8}{8}$i-1$}
\end{overpic}}} = \vcenter{\hbox{\begin{overpic}[scale=.35]{Mbfn7.pdf}
\put(4, 54){\fontsize{8}{8}$n-i-1$}
\put(62, 54){\fontsize{8}{8}$i-1$}
\put(46, 5){\fontsize{8}{8}$n-i-1$}
\put(4, 5){\fontsize{8}{8}$i-1$}
\end{overpic}}} \in  (Mb_n)_{n-2}.
\end{equation}
\end{proof}

\begin{corollary}\label{JONESWENZL:antipodal} Sliding $f_n$ through the crosscap is achieved by the following equations. 
\begin{eqnarray}
\vcenter{\hbox{
\begin{overpic}[scale = .35]{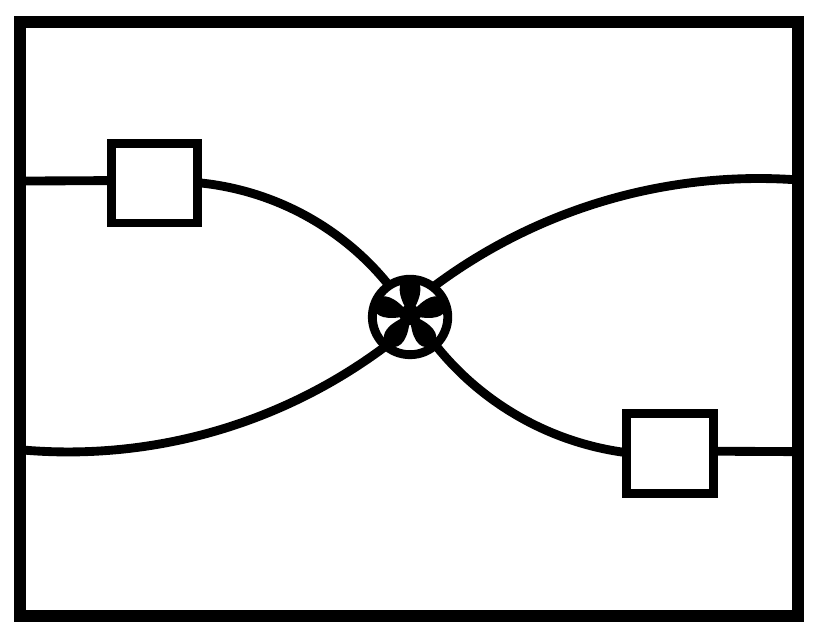}
\put(4, 49){\fontsize{8}{8}$n$}
\put(74, 21){\fontsize{8}{8}$n$}
\end{overpic} }}  &=&
 \vcenter{\hbox{
\begin{overpic}[scale = .35]{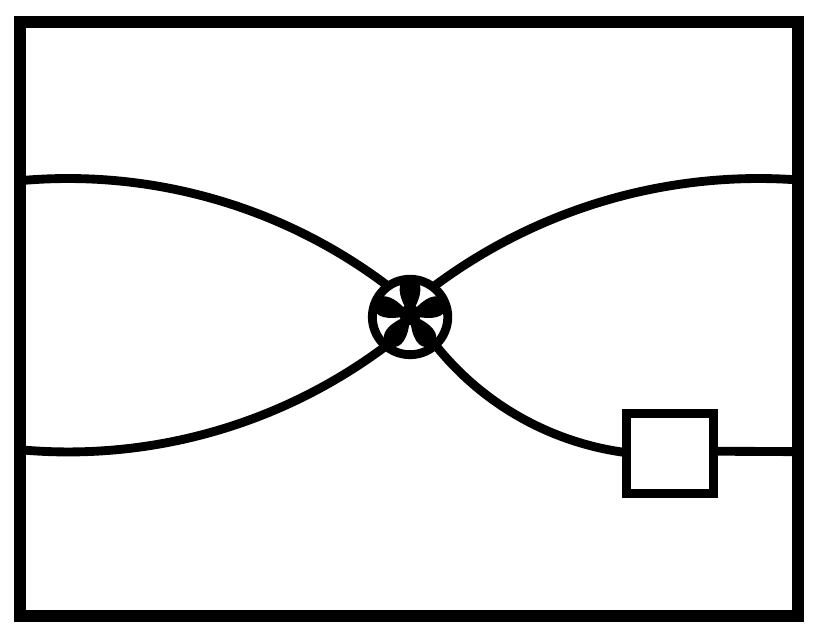}
\put(4, 49){\fontsize{8}{8}$n$}
\put(74, 21){\fontsize{8}{8}$n$}
\end{overpic} }} \label{mob:fnslide1}\\
&=& \vcenter{\hbox{
\begin{overpic}[scale = .35]{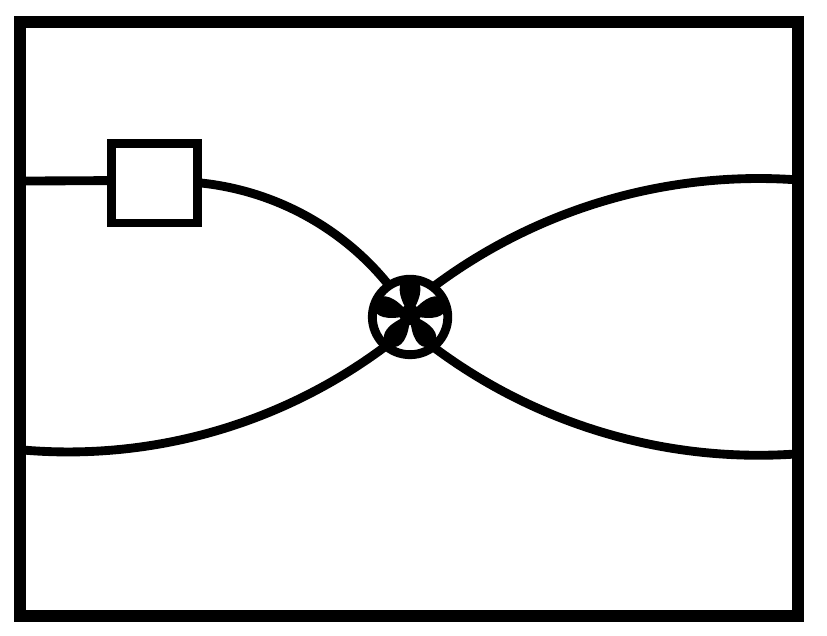}
\put(4, 49){\fontsize{8}{8}$n$}
\put(74, 21){\fontsize{8}{8}$n$}
\end{overpic} }}. \label{mob:fnslide2}
\end{eqnarray}
\end{corollary}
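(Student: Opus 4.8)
The plan is to deduce both displayed equalities from a single algebraic observation: the Jones--Wenzl idempotent is fixed by the index-reversing automorphism $\phi\colon\mathit{TL}_n\to\mathit{TL}_n$ determined by $\phi(e_i)=e_{n-i}$, and passing $n$ parallel strands through the crosscap realizes precisely this $\phi$. Concretely, by Lemma \ref{JONESWENZL:thmfnbasis}(a) I may write $f_n = 1 + w$, where $w=\sum_\mu c_\mu E_\mu$ is a finite $\mathbb{Q}(A)$-linear combination of words $E_\mu=e_{i_1}\cdots e_{i_k}$ in the generators. ``Sliding $f_n$ through the crosscap'' then means computing the right action $1_{Mb_n} f_n$ and comparing it with the left action $f_n\,1_{Mb_n}$, where $1_{Mb_n}$ is the element of Lemma \ref{Lemma:eipull}.

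First I would confirm that $\phi$ is a well-defined algebra automorphism with $\phi^2=\mathrm{id}$. This is immediate from Definition \ref{TEMPERLEY:tln}, since the three families of defining relations are symmetric under the involution $i\mapsto n-i$: adjacency $|i-j|=1$ and separation $|i-j|>1$ are both preserved, and $e_i^2=d e_i$ is carried to $e_{n-i}^2=d e_{n-i}$. Next I would show $\phi(f_n)=f_n$. Recall that $f_n$ is the unique element of $\mathit{TL}_n$ that is congruent to $1$ modulo the span of the nonempty words in the $e_i$ and is annihilated on both sides by every generator; this uniqueness follows from parts (a) and (b) of Lemma \ref{JONESWENZL:thmfnbasis}. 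Applying $\phi$ to these properties, $\phi(f_n)=1+\phi(w)$ is again congruent to $1$, while $e_i\,\phi(f_n)=\phi\!\left(e_{n-i}f_n\right)=0$ together with $\phi(f_n)\,e_i=0$ show that $\phi(f_n)$ satisfies the same two characterizing conditions. Uniqueness then forces $\phi(f_n)=f_n$.

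With these ingredients in place the remaining step is a term-by-term application of Lemma \ref{Lemma:eipull}. Transporting one generator at a time across the crosscap gives $1_{Mb_n} e_{i_1}\cdots e_{i_k}=e_{n-i_1}\cdots e_{n-i_k}\,1_{Mb_n}=\phi(E_\mu)\,1_{Mb_n}$ for each word, and $1_{Mb_n}$ itself is unchanged at every stage. Summing against the coefficients $c_\mu$ and using $1_{Mb_n}\cdot 1=1_{Mb_n}$ yields
\begin{equation*}
1_{Mb_n}\,f_n \;=\; \phi(f_n)\,1_{Mb_n} \;=\; f_n\,1_{Mb_n},
\end{equation*}
which is the content of \eqref{mob:fnslide1}; the second equality \eqref{mob:fnslide2} is read off from the same chain, traversing the slide in the opposite direction (equivalently applying the involution $\phi$ once more). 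I expect the genuine difficulty to be diagrammatic bookkeeping rather than algebra: one must check that the picture of a single box of $n$ strands sliding through the crosscap is faithfully encoded, letter by letter, by the single-generator move of Lemma \ref{Lemma:eipull}, and that the three boundary pictures in \eqref{mob:fnslide1}--\eqref{mob:fnslide2} are matched so that the left and right actions on $1_{Mb_n}$ correspond to the two sides of the crosscap as drawn.
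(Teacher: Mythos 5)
Your proposal is correct, but it takes a recognizably different route from the paper, so it is worth comparing the two. The paper's left-hand picture contains \emph{two} copies of $f_n$, one on each side of the crosscap; its proof expands the left copy as $1+w$ via Lemma \ref{JONESWENZL:thmfnbasis}(a), pulls each word of $w$ through the crosscap by Lemma \ref{Lemma:eipull}, and observes that every transported generator $e_{n-i}$ lands as a turn-back on the second copy of $f_n$ and is killed by Lemma \ref{JONESWENZL:thmfnbasis}(b); expanding the right copy instead gives the other equation, and the two one-box pictures in \eqref{mob:fnslide1} and \eqref{mob:fnslide2} are equal only as a consequence. You instead prove the one-box slide $1_{Mb_n}f_n=f_n\,1_{Mb_n}$ directly, by factoring the argument through the algebraic symmetry $\phi(f_n)=f_n$ for the flip automorphism $\phi(e_i)=e_{n-i}$, and then transporting the whole idempotent across the crosscap term by term. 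The ingredients are the same --- Lemma \ref{JONESWENZL:thmfnbasis}(a),(b) and Lemma \ref{Lemma:eipull} --- but your organization buys a reusable fact: any element fixed by $\phi$ slides through the crosscap, with the annihilation trick quarantined inside the standard uniqueness characterization of $f_n$ rather than performed inline on the diagram with a spare copy of $f_n$ standing by to absorb error terms. Two small repairs are needed to match the corollary as stated. First, since the left-hand picture has two $f_n$ boxes, your chain only yields the equality of the two one-box pictures; to recover the full statement you must add the (trivial) step $f_n\,1_{Mb_n}f_n = 1_{Mb_n}f_nf_n = 1_{Mb_n}f_n$ using Lemma \ref{JONESWENZL:thmfnbasis}(c), together with the bimodule compatibility of the left and right $\mathit{TL}_n$-actions. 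Second, your phrase ``this uniqueness follows from parts (a) and (b)'' is loose: (a) and (b) supply \emph{existence} of an element with the two characterizing properties, while uniqueness is the separate (standard) computation that two such elements $g=1+w$ and $g'=1+w'$ satisfy $gg'=g'$ and $gg'=g$. Neither point is a substantive gap.
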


\begin{proof}
By Lemma \ref{JONESWENZL:thmfnbasis}(a) and Lemma \ref{Lemma:eipull}, all of the $e_i$'s coming from the left $f_n$ from left hand side of Equation \ref{mob:fnslide1} can be pulled through the crosscap. Furthermore, for each $e_i$ this action results in a turn back on the second $f_n$. Therefore, we obtain our desired result after applying Lemma \ref{JONESWENZL:thmfnbasis}(b). Equation \ref{mob:fnslide2} is obtained similarly.
\end{proof}

\section{Traces of Jones-Wenzl idempotents in the KBSM of twisted I-bundle of the M\"obius band}

We will now present the three direct corollaries to Lemma \ref{JONESWENZL:lemmacircle} that are obtained from closing $f_b$ through the crosscap.
\begin{corollary}
$$\vcenter{\hbox{
\begin{overpic}[scale = .3]{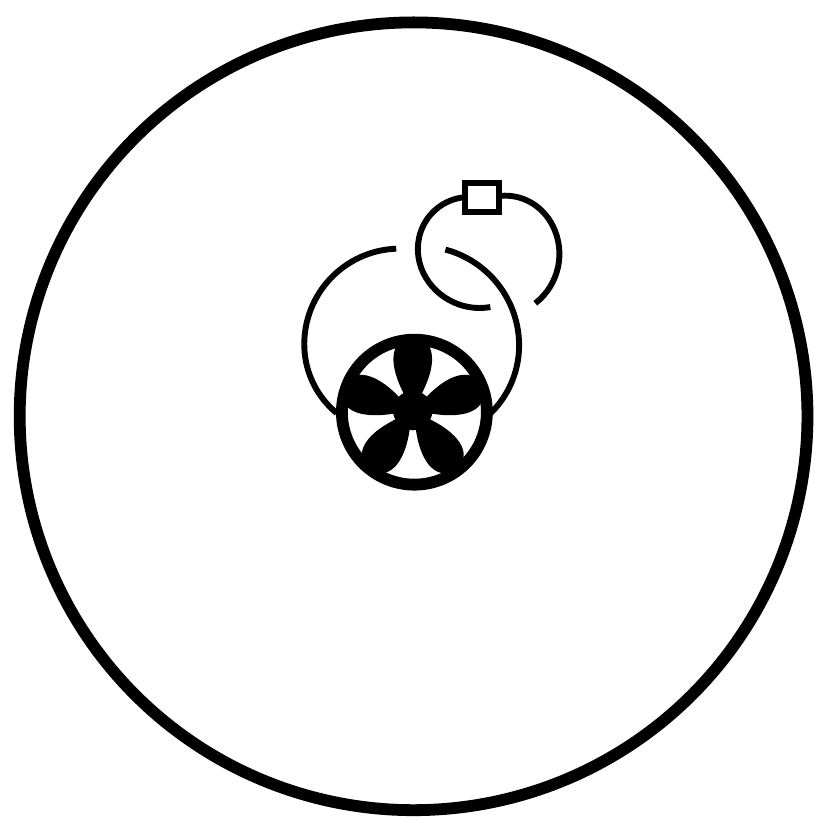}
\put(45, 57){\fontsize{8}{8}$n$}
\end{overpic} }} = \frac{x}{\Delta_1}(-1)^{n}T_{n+1}(d) \Delta_n ,$$
where $T_{k}(d) = (-1)^k(A^{2k}+A^{-2k})$ is the $k^{th}$ Chebyshev polynomial of the first kind and $d=-A^2-A^{-2}$.
\end{corollary}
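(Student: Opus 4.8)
The plan is to read the picture as two independent local pieces—a single decorating loop encircling the $n$ parallel strands carrying $f_n$, and the closure of $f_n$ through the crosscap—and to treat them separately. I would first dispose of the encircling loop by a direct appeal to Lemma \ref{JONESWENZL:lemmacircle}. Taking $b=n$ and $a=1$ there (one loop around the $n$-cable carrying $f_n$), the loop is absorbed at the cost of the scalar
\[
\varsigma = \frac{(-1)\left(A^{4(n+1)} - A^{-4(n+1)}\right)}{A^{2(n+1)} - A^{-2(n+1)}} = -\left(A^{2(n+1)} + A^{-2(n+1)}\right) = (-1)^n T_{n+1}(d),
\]
where the last equality is just the closed form $T_{n+1}(d) = (-1)^{n+1}\!\left(A^{2(n+1)} + A^{-2(n+1)}\right)$. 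This accounts for the factor $(-1)^n T_{n+1}(d)$ in the statement and leaves the undecorated closure of $f_n$ through the crosscap.

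The remaining step is to show that the undecorated closure of $f_n$ through the crosscap equals $\frac{\Delta_n}{\Delta_1}\,x$. I would argue inductively on $n$, peeling the $n$-cable passing through the crosscap down to a single strand one layer at a time. Using the antipodal property of the crosscap (Corollary \ref{JONESWENZL:antipodal} together with Lemma \ref{Lemma:eipull}) to organize the outermost strand and Corollary \ref{JONESWENZL:corollarytrace} to close it off, each reduction replaces $f_k$ by $f_{k-1}$ and contributes the factor $\frac{\Delta_k}{\Delta_{k-1}}$. The product telescopes,
\[
\prod_{k=2}^{n}\frac{\Delta_k}{\Delta_{k-1}} = \frac{\Delta_n}{\Delta_1},
\]
and the base case is the single strand through the crosscap, which is by definition the curve $x$; this matches the statement at $n=1$, where $\frac{\Delta_1}{\Delta_1}\,x = x$. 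Combining the two pieces yields $\varsigma\cdot\frac{\Delta_n}{\Delta_1}\,x = \frac{x}{\Delta_1}(-1)^n T_{n+1}(d)\,\Delta_n$, as claimed.

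The hard part will be the second step: I must ensure that the antipodal identification across the crosscap, which reverses the order of the $n$ strands, produces no homotopically non-trivial component (a $z$ curve) and no trivial component (a spurious factor of $d$) beyond the single surviving $x$. Concretely, after each partial closure the turnbacks forced by the reversal must be annihilated by $e_i f_n = f_n e_i = 0$ (Lemma \ref{JONESWENZL:thmfnbasis}(b)), so that by Corollary \ref{rkbsmcor} the only basis element that can survive is $x$. Verifying that this absorption is complete at every stage—equivalently, that the reversal across the crosscap interacts with $f_n$ exactly as an iterated partial trace—is the place where the argument has to be carried out in detail rather than merely quoted.
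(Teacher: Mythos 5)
Your proof has a fatal gap, and it begins with a misreading of the figure. In the paper's diagram the idempotent $f_n$ decorates the \emph{encircling loop}, while the component closed through the crosscap is a \emph{single} strand; the paper's entire proof is one application of Lemma \ref{JONESWENZL:lemmacircle} with $a=n$, $b=1$, which removes the $f_n$-decorated loop at the cost of
$\varsigma=\frac{(-1)^n\left(A^{4(n+1)}-A^{-4(n+1)}\right)}{A^{4}-A^{-4}}=\frac{(-1)^nT_{n+1}(d)\,\Delta_n}{\Delta_1}$,
leaving exactly the curve $x$. There is nothing left to induct on. You have swapped the roles of the two components ($a=1$, $b=n$); your first step is correct as algebra (it is the mechanism of the third corollary in this section), but it leaves you with the closure of the full $n$-cable carrying $f_n$ through the crosscap, and the whole burden of your argument becomes the claimed identity ``closure of $f_n$ through the crosscap $=\frac{\Delta_n}{\Delta_1}\,x$.''

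That identity is false, and the paper itself contains the counterexample. When all strands pass through the crosscap, intersection points cancel in pairs, so for even $n$ the resulting skein has no $x$ component at all: Equation \ref{Eqn:Mb2} computes the closure of $f_2$ through the crosscap to be $S_1(z)-S_0(z)=z-1$, and the paper remarks explicitly that $\mathit{tr}_{Mb_2}(f_n)$ is not a factor of $x$. By Corollary \ref{rkbsmcor}, $z-1$ and a scalar multiple of $x$ are distinct elements of the module, so no bookkeeping of turnbacks can rescue the telescoping. The step that breaks is precisely the one you flagged as the hard part: Corollary \ref{JONESWENZL:corollarytrace} applies only when a strand is closed off by a trivial cap, and the paper states just before Lemma \ref{lemma:Mb1} that it can no longer be applied once strands are closed through the crosscap; the antipodal reversal produces closures whose values are polynomials in $z$ (see the propositions following Lemmas \ref{lemma:Mb1} and \ref{Lemma:Mb2}), not the factors $\frac{\Delta_k}{\Delta_{k-1}}$ your induction requires. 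Incidentally, this shows the two readings of the figure are not interchangeable: under your reading the stated formula would fail for every even $n$, which is how one can tell, even blind, that $f_n$ must sit on the encircling loop.
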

\begin{proof}
Let $a=n$ and $b=1$ in Lemma \ref{JONESWENZL:lemmacircle}. Then by closing $f_b$ through the crosscap we have
$$ \vcenter{\hbox{
\begin{overpic}[scale = .3]{HopfdecMB1.pdf}
\put(35, 50){\fontsize{6}{6}$n$}
\end{overpic} }} = \frac{(-1)^n (A^{4(n+1)}-A^{-4(n+1)})}{A^4-A^{-4}}\vcenter{\hbox{
\begin{overpic}[scale = .3]{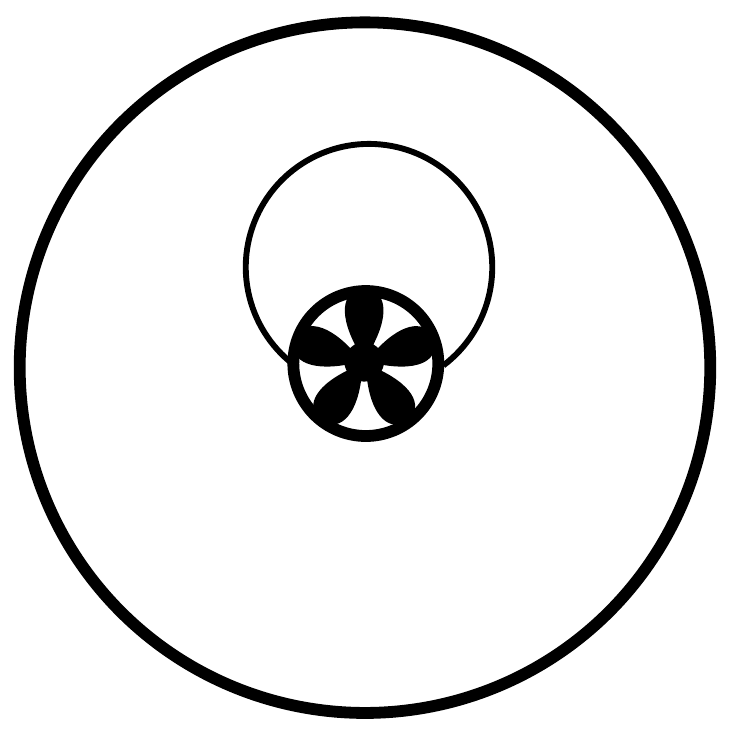} 
\end{overpic} }}.$$
After simplification we have our desired result, where $x$ denotes the simple closed curve that intersects the crosscap once. 
\end{proof}

\begin{corollary}
$$\vcenter{\hbox{
\begin{overpic}[scale = .3]{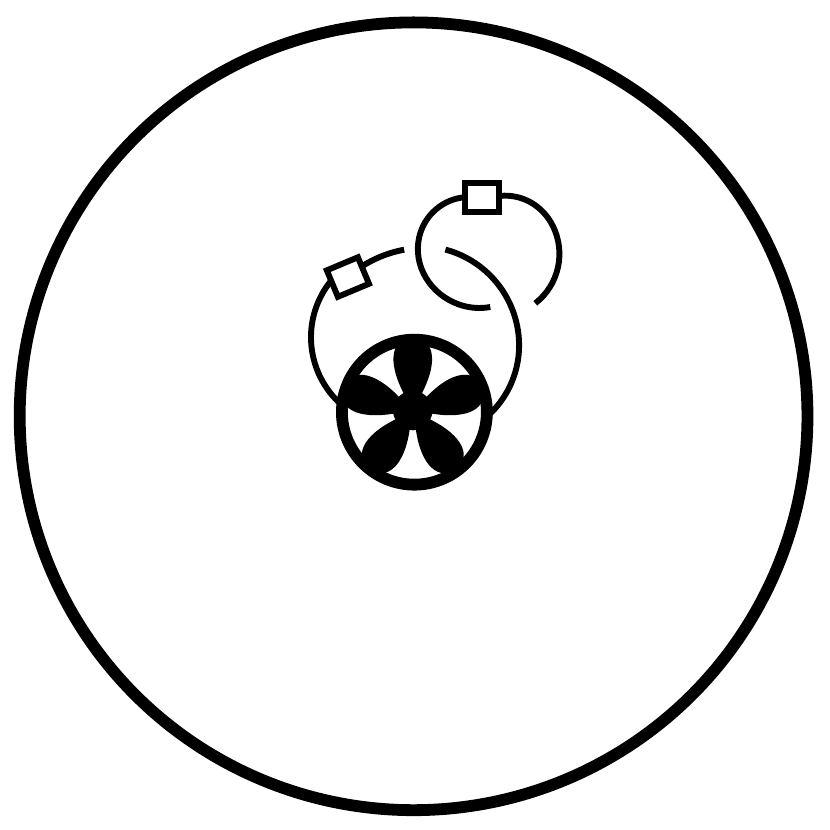}
\put(45, 57){\fontsize{8}{8}$n$}
\put(17, 40){\fontsize{8}{8}$m$}
\end{overpic} }} = \frac{(-1)^n(A^{2(n+1)(m+1)}-A^{-2(n+1)(m+1)})}{A^{2(n+1)}-A^{-2(n+1)}}\vcenter{\hbox{
\begin{overpic}[scale = .3]{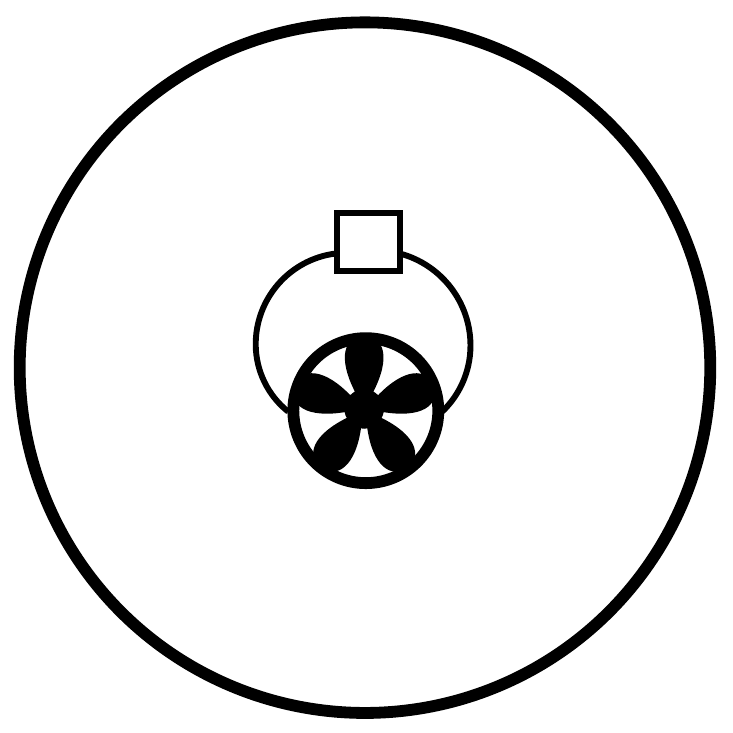}
\put(17, 40){\fontsize{8}{8}$m$}
\end{overpic} }}.$$
\end{corollary}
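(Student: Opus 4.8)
The plan is to read the left-hand diagram as a single instance of the encirclement identity of Lemma \ref{JONESWENZL:lemmacircle}, transplanted into the twisted $I$-bundle over the M\"obius band. Concretely, it is an $f_n$-colored loop encircling an $f_m$-colored strand, which is precisely the configuration treated in the preceding corollary, except that the innermost single strand there ($b=1$) is now promoted to a general Jones-Wenzl idempotent $f_m$ ($b=m$). So the whole argument should be a specialization of Lemma \ref{JONESWENZL:lemmacircle} with $a=n$ and $b=m$, mirroring the $b=1$ computation line for line.

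First I would verify that the encircling move is genuinely local: the $f_n$-colored loop together with the arc of the $f_m$-strand that it bounds can be isotoped into an embedded ball in $Mb \hat{\times} I$ (equivalently, into a $D^2 \times I$ region). Because the Kauffman bracket skein relations are purely local, the identity of Lemma \ref{JONESWENZL:lemmacircle} holds inside any such ball, and hence applies verbatim here regardless of the nonorientability of the ambient manifold. Setting $a=n$ and $b=m$ then absorbs the entire $f_n$-colored loop into the scalar $\varsigma$ of Lemma \ref{JONESWENZL:lemmacircle} and leaves the $f_m$-colored strand unchanged; reading that surviving strand, closed through the crosscap, as the diagram on the right (HopfMb9) completes the identity. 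Conceptually the reason the loop can be swallowed is that $f_m$ is a minimal idempotent, so encircling it acts as a scalar (an eigenvalue of the encirclement operator), and Lemma \ref{JONESWENZL:lemmacircle} records exactly that eigenvalue.

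The one point that requires care, and the only place the M\"obius band enters, is this locality check: I must confirm that the loop and the portion of the $m$-strand it bounds lie in an orientable disk neighborhood, so that the orientation-reversing gluing defining the M\"obius band never interacts with the local relation being applied. Once that is granted the proof is immediate, exactly paralleling the $b=1$ corollary above. Note also that, in contrast to that corollary, no further rewriting of $\varsigma$ into the Chebyshev data $\Delta_n$, $T_{n+1}(d)$, or the crosscap curve $x$ is needed here, since the right-hand side is recorded as the bare scalar $\varsigma$ times the $m$-colored diagram rather than reduced to a single closed curve; the computation therefore stops as soon as Lemma \ref{JONESWENZL:lemmacircle} is invoked.
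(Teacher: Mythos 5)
Your proposal is correct and coincides with the paper's own proof, which likewise just invokes Lemma \ref{JONESWENZL:lemmacircle} with $a=n$, $b=m$ and closes $f_m$ through the crosscap (your locality check, isotoping the encircling loop into a ball, is the implicit justification the paper omits). One caveat worth flagging: the substitution $a=n$, $b=m$ actually yields $\varsigma = \frac{(-1)^n\left(A^{2(m+1)(n+1)}-A^{-2(m+1)(n+1)}\right)}{A^{2(m+1)}-A^{-2(m+1)}}$, whose denominator involves $m+1$ rather than the $n+1$ printed in the statement, so the stated coefficient appears to contain a typo (the two agree only when $m=n$) --- a discrepancy between the paper's statement and its own proof, not a defect of your argument.
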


\begin{proof}
This is obtained from directly applying Lemma \ref{JONESWENZL:lemmacircle} where $a=n$, $b=m$, and $f_b$ is closed through the crosscap.
\end{proof}
\begin{corollary} Let $d=-A^2-A^{-2}$, then
$$\vcenter{\hbox{
\begin{overpic}[scale = .3]{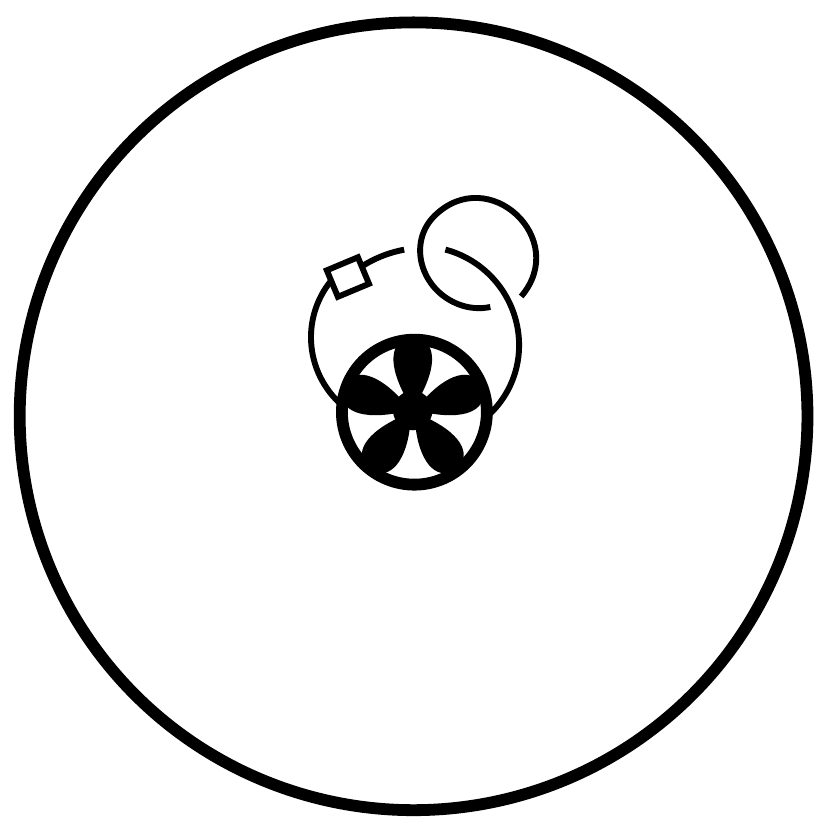}
\put(45, 57){\fontsize{8}{8}$m$}
\put(17, 40){\fontsize{8}{8}$n$}
\end{overpic} }} = ((-1)^{n}T_{n+1}(d))^m \vcenter{\hbox{
\begin{overpic}[scale = .3]{HopfMb9.pdf}
\put(17, 40){\fontsize{8}{8}$n$}
\end{overpic} }}.$$
\end{corollary}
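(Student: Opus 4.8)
The plan is to read this diagram as the Möbius-band analog of the Hopf-link corollary at the end of Section~\ref{IntroJW}. There, $m$ parallel meridian loops encircle a core coloured $k$, and since the core is closed off inside a ball the answer is the scalar $((-1)^{k}T_{k+1})^{m}\Delta_k$. Here the core bundle coloured $n$ is instead closed through the crosscap, so in place of the scalar $\Delta_n$ the surviving factor is the element obtained by closing $f_n$ through the crosscap (the picture on the right-hand side). The first observation I would make is that each of the $m$ meridians is a single unknotted loop that links only the $n$-strand bundle once, transversally, inside an orientable $3$-ball of $Mb\,\hat{\times}\,I$; the crosscap plays no part in this local linking. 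Consequently the fusing of a meridian is governed verbatim by Lemma~\ref{JONESWENZL:lemmacircle}, exactly as in the orientable setting.

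Next I would compute the scalar produced by absorbing a single meridian, by setting $a=1$ and $b=n$ in Lemma~\ref{JONESWENZL:lemmacircle}:
\[
\varsigma=\frac{(-1)^{1}\bigl(A^{4(n+1)}-A^{-4(n+1)}\bigr)}{A^{2(n+1)}-A^{-2(n+1)}}
=-\bigl(A^{2(n+1)}+A^{-2(n+1)}\bigr)=(-1)^{n}T_{n+1}(d),
\]
using the factorization $A^{4(n+1)}-A^{-4(n+1)}=\bigl(A^{2(n+1)}-A^{-2(n+1)}\bigr)\bigl(A^{2(n+1)}+A^{-2(n+1)}\bigr)$ together with the closed form $T_{n+1}(d)=(-1)^{n+1}\bigl(A^{2(n+1)}+A^{-2(n+1)}\bigr)$. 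Thus a single meridian around $f_n$ can be removed at the cost of the factor $(-1)^{n}T_{n+1}(d)$, and what remains is again the idempotent $f_n$, unchanged apart from that scalar.

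I would then iterate this by induction on $m$. Removing the innermost meridian multiplies the diagram by $(-1)^{n}T_{n+1}(d)$ and returns a diagram whose $f_n$ core is still closed through the crosscap but is now linked by only $m-1$ meridians; the inductive hypothesis then supplies the factor $\bigl((-1)^{n}T_{n+1}(d)\bigr)^{m-1}$, giving $\bigl((-1)^{n}T_{n+1}(d)\bigr)^{m}$ in total. The base case $m=0$ leaves no meridian, so the diagram is exactly $f_n$ closed through the crosscap, matching the right-hand side with the empty product. The main obstacle is not the algebra but the locality bookkeeping in the first paragraph: one must verify that each meridian genuinely encircles only the $f_n$ bundle and sits in an orientable ball, so that Lemma~\ref{JONESWENZL:lemmacircle}—stated for the disk/annulus—applies with the same scalar $(-1)^{n}T_{n+1}(d)$ irrespective of how $f_n$ is ultimately closed through the crosscap, and that after each fusion the remaining meridians still encircle the (unaltered) idempotent. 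This last point is precisely what makes the clean $m$th power appear, in exact parallel with the orientable Hopf-link corollary.
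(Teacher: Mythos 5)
Your proposal is correct and follows the paper's own proof essentially verbatim: the paper also removes one meridian at a time by applying Lemma \ref{JONESWENZL:lemmacircle} with $a=1$, $b=n$ while $f_n$ is closed through the crosscap, obtaining the factor $(-1)^{n}T_{n+1}(d)$ per meridian, and then repeats the argument $m-1$ more times. Your added bookkeeping (the explicit factorization giving $\varsigma=(-1)^{n}T_{n+1}(d)$, and the observation that each meridian links the $f_n$ core inside an orientable ball so the lemma applies unchanged) is exactly the "after simplification" step the paper leaves implicit.
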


\begin{proof}
We may remove one of the $m$ meridional curves by applying Lemma \ref{JONESWENZL:lemmacircle} where $a=1$ and $b=n$ and closing $f_b$ through the crosscap. After simplification we have
$$ \vcenter{\hbox{
\begin{overpic}[scale = .3]{HopfdecMB3.pdf}
\put(45, 57){\fontsize{8}{8}$m$}
\put(17, 40){\fontsize{8}{8}$n$}
\end{overpic} }}  =  ((-1)^{n}T_{n+1}(d)) \vcenter{\hbox{
\begin{overpic}[scale = .3]{HopfdecMB3.pdf}
\put(34, 57){\fontsize{8}{8}$m-1$}
\put(17, 40){\fontsize{8}{8}$n$}
\end{overpic} }} .$$
We obtain our desired result after repeating this argument $m-1$ more times. 
\end{proof}

The following corollary is obtained from Corollary \ref{Cor:chebyshev} by gluing a crosscap to the inner boundary of the annulus.
\begin{corollary}\label{coro:TraceMBAnn} Let $z$ denote the homotopically non-trivial curve in the M\"obius band that does not intersect the crosscap, then
$$\vcenter{\hbox{
\begin{overpic}[scale = .3]{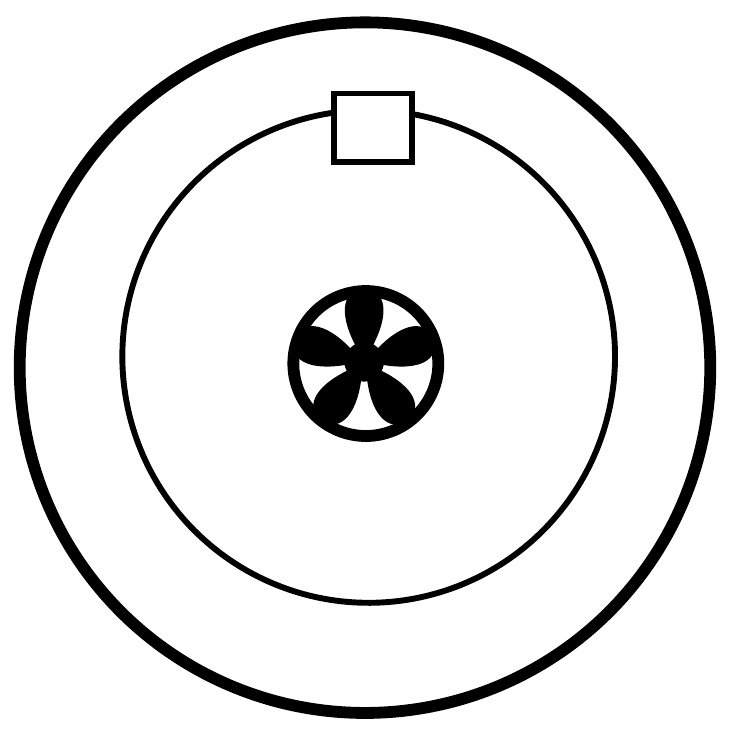}
\put(29, 13){\fontsize{8}{8}$n$}
\end{overpic} }}   = S_n(z). $$
\end{corollary}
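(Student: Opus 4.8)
The plan is to recognize the left-hand side as the annular trace $\mathit{tr}_{\mathit{Ann}}(f_n)$ performed inside an embedded copy of $\mathit{Ann} \times I$, and then to invoke Corollary \ref{Cor:chebyshev}. The key geometric observation is the one suggested by the statement: the M\"obius band is obtained from an annulus by gluing a crosscap to its inner boundary circle, so a regular neighborhood $N$ of $\partial Mb$ is an annulus whose core is exactly the boundary-parallel curve $z$. Since $z$ and all of the closure arcs avoid the crosscap, the entire diagram obtained by closing $f_n$ without passing through the crosscap can be isotoped into $N$.

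First I would make this precise at the level of $3$-manifolds. Because $N$ is an orientable annulus that does not wind around the crosscap, the restriction of the twisted $I$-bundle to $N$ is the trivial bundle, giving an embedding $\iota \colon \mathit{Ann} \times I \hookrightarrow Mb \hat\times I$ whose image is a collar neighborhood of the boundary. Under $\iota$ the core of the annulus is sent to the boundary-parallel curve $z$, and the closure of $f_n$ drawn on the left-hand side is precisely the image under $\iota$ of the annular closure $\mathit{tr}_{\mathit{Ann}}(f_n)$. Since the Kauffman bracket skein relations are local, $\iota$ induces a homomorphism of skein modules, and pushing the identity $\mathit{tr}_{\mathit{Ann}}(f_n) = S_n(z)$ of Corollary \ref{Cor:chebyshev} forward along this homomorphism produces the stated equality, with $k$ parallel cores going to $z^k$.

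The one point that needs care — and the main, albeit mild, obstacle — is to confirm that no hidden relation coming from the crosscap collapses the Chebyshev expansion after applying $\iota$, that is, that the monomials $z^k$ remain distinct in the KBSM of $Mb \hat\times I$. This is exactly what Corollary \ref{rkbsmcor}(2) guarantees when $n = 0$: the powers $z^i$ appear as independent elements of the standard basis of the KBSM of $Mb \hat\times I$. Consequently $\iota\bigl(S_n(z)\bigr) = S_n(z)$ with no further simplification, and the closure of $f_n$ that avoids the crosscap equals $S_n(z)$, as claimed.
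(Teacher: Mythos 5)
Your proof is correct and takes essentially the same route as the paper: the paper's one-line justification---that the corollary is ``obtained from Corollary \ref{Cor:chebyshev} by gluing a crosscap to the inner boundary of the annulus''---is exactly your collar-embedding argument, viewing the closure of $f_n$ as the annular trace inside a neighborhood $\mathit{Ann} \times I$ of the boundary and pushing the identity $\mathit{tr}_{\mathit{Ann}}(f_n) = S_n(z)$ forward along the induced skein-module homomorphism. Your final check via Corollary \ref{rkbsmcor}(2) that the powers $z^k$ remain independent is not actually needed for the stated equality (an identity pushes forward along any homomorphism, injective or not), though it does confirm that $S_n(z)$ is already expressed in the standard basis.
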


\begin{corollary}\label{coro:tracefm}
\begin{eqnarray*}
\vcenter{\hbox{
\begin{overpic}[scale = .45]{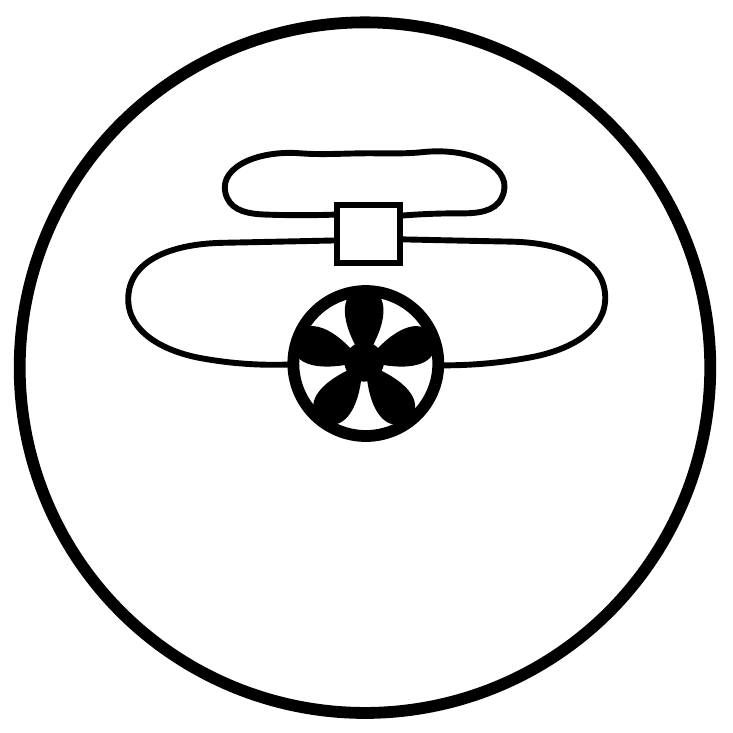}
\put(43, 78){$m$}
\put(60, 50){$n$}
%\put(55, 70){\rotatebox{40}{\fontsize{6}{6}$n-1$}}
\end{overpic} }} &=& \frac{\Delta_{m+n}}{\Delta_{n}}\vcenter{\hbox{
\begin{overpic}[scale = .45]{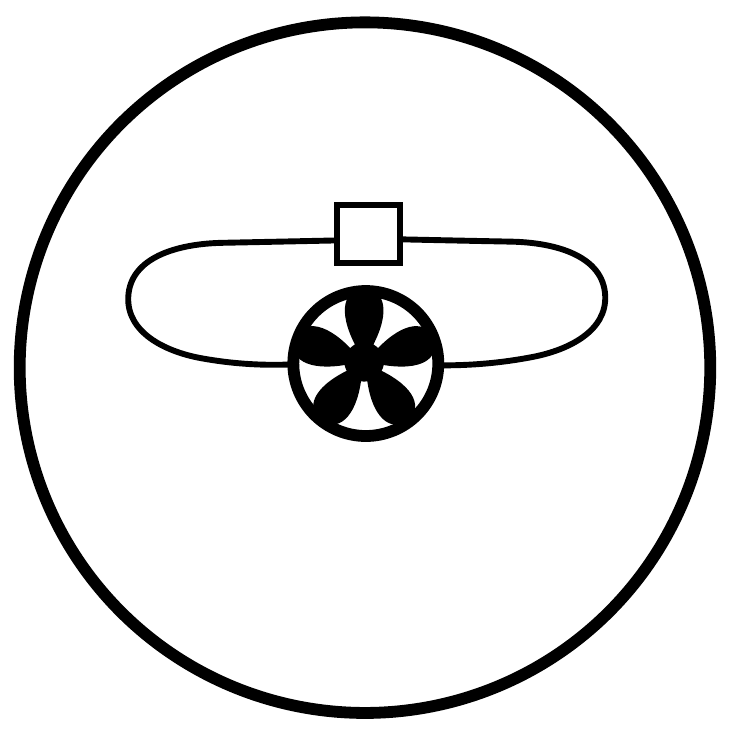}
\put(59, 67){$n$}
%\put(20, 79){\fontsize{6}{6}$n-2$}
\end{overpic} }}.
\end{eqnarray*}
\end{corollary}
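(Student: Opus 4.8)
The plan is to read the left-hand side as the idempotent $f_{m+n}$ in which the top $m$ strands are closed off inside a disk region of the M\"obius band that is disjoint from the crosscap, while the remaining $n$ strands carry the crosscap configuration drawn on the right. Each such cap is precisely the partial closure $\mathit{tr}_1$ of Corollary \ref{JONESWENZL:corollarytrace}, so the idea is to peel off the $m$ capped strands one at a time and let the resulting scalars telescope down to $\frac{\Delta_{m+n}}{\Delta_n}$.

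First I would invoke the absorption property of Lemma \ref{JONESWENZL:thmfnbasis}(c)$'$ to regard the $m$ capped strands together with the $n$ strands running around the crosscap as all factoring through a single copy of $f_{m+n}$, which is what the left figure depicts. Next I would apply Corollary \ref{JONESWENZL:corollarytrace} to the topmost capped strand: this replaces $f_{m+n}$ with that strand closed by $\frac{\Delta_{m+n}}{\Delta_{m+n-1}} f_{m+n-1}$, leaving the crosscap configuration of the untouched strands unaffected. Iterating $m$ times — closing the $j^{\mathit{th}}$ strand contributes the factor $\frac{\Delta_{m+n-j+1}}{\Delta_{m+n-j}}$ and leaves $f_{m+n-j}$ — the intermediate numerators and denominators cancel, so after the $m^{\mathit{th}}$ step the accumulated scalar is
\begin{equation*}
\prod_{j=1}^{m} \frac{\Delta_{m+n-j+1}}{\Delta_{m+n-j}} = \frac{\Delta_{m+n}}{\Delta_{n}},
\end{equation*}
with $f_n$ (its $n$ strands still closed around the crosscap) surviving as the right-hand figure. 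The cleanest way to organize this is as an induction on $m$: the base case $m=0$ is trivial, and the inductive step is a single application of Corollary \ref{JONESWENZL:corollarytrace}.

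The one point requiring care is to check that every cap can be isotoped into a trivial disk neighborhood that misses the crosscap, so that Corollary \ref{JONESWENZL:corollarytrace} — a statement about the ordinary partial trace in $D^2 \times I$ — applies verbatim at each stage, and that the $n$ strands participating in the crosscap remain disjoint from the portion being traced. Once this locality of each $\mathit{tr}_1$ is secured, the reduction is purely formal: the crosscap data is carried along untouched and only the telescoping scalar changes, so no M\"obius-specific terms (such as the curves $x$ or $z$) can appear, consistent with the answer being a pure ratio of $\Delta$'s.
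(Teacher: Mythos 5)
Your proposal is correct and matches the paper's proof, which likewise applies Corollary \ref{JONESWENZL:corollarytrace} $m$ times to the strands closed away from the crosscap and lets the ratios $\frac{\Delta_{m+n-j+1}}{\Delta_{m+n-j}}$ telescope to $\frac{\Delta_{m+n}}{\Delta_n}$. Your added care about each cap lying in a disk disjoint from the crosscap is exactly the point implicit in the paper's phrase ``closing the rest away from the crosscap,'' which is what distinguishes this situation from $\mathit{tr}_{Mb_1}$, where that corollary cannot be applied.
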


\begin{proof}
Apply Corollary \ref{JONESWENZL:corollarytrace}  $m$ times when closing $n$ strands from $f_{n+m}$ through the crosscap then closing the rest away from the crosscap. 
\end{proof}

Let $\mathit{tr}_{Mb_1}(f_n)$ be obtained from closing one arc from $f_n$ through the crosscap and closing the rest of the arcs in such a way that it surrounds the crosscap, as shown in Figure \ref{Mobius:trace}. Then 
Lemma \ref{JONESWENZL:corollarytrace} can no longer be directly applied. Instead we start with applying Wenzl's recursion formula, Theorem \ref{JONESWENZL:wenzlrecursion}, to obtain a recursive formula for $\mathit{tr}_{\mathit{Mb}_1}(f_n)$.

\begin{figure}[ht] 
\centering
$\vcenter{\hbox{\begin{overpic}[scale=.4]{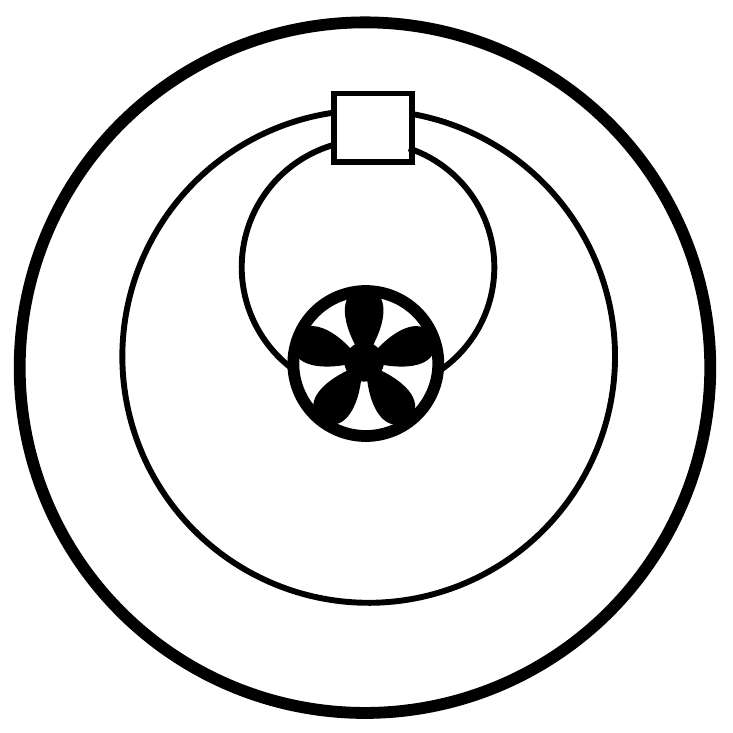}
\put(35, 9){\fontsize{6}{6}$n-1$}
%\put(36, 29){$F_n$}
\end{overpic}}}$
\caption{Illustration of $\mathit{tr}_{Mb_1}(f_n)$.}
\label{Mobius:trace}
\end{figure}

\begin{lemma}\label{lemma:Mb1}
$$ \mathit{tr}_{\mathit{Mb}_1}(f_n) = xS_{n-1}(z) - \frac{\Delta_{n-2}}{\Delta_{n-1}} \mathit{tr}_{\mathit{Mb}_1}(f_{n-1}).$$

\end{lemma}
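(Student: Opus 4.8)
The plan is to apply Wenzl's recursion (Theorem \ref{JONESWENZL:wenzltheorem}) to $f_n$ \emph{before} closing, and then use the linearity of the closure operation $\mathit{tr}_{\mathit{Mb}_1}$ to split $\mathit{tr}_{\mathit{Mb}_1}(f_n)$ into the two terms of Equation \ref{JONESWENZL:wenzlrecursion}. Throughout I would fix the convention that the single arc closed through the crosscap is the top ($n$-th) strand, i.e.\ the same strand distinguished in Wenzl's recursion, so that the recursion and the closure interact cleanly.

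First I would treat the first term of the recursion, namely $f_{n-1}$ on the bottom $n-1$ strands with the $n$-th strand running straight through. Under $\mathit{tr}_{\mathit{Mb}_1}$ the straight $n$-th strand closes through the crosscap and yields a single copy of the curve $x$, while the remaining $n-1$ strands carrying $f_{n-1}$ close around the crosscap. The latter is exactly the M\"obius-band annular closure of $f_{n-1}$, which equals $S_{n-1}(z)$ by Corollary \ref{coro:TraceMBAnn}. Since $x$ (the core) and the surrounding $z$-curves are disjoint simple closed curves, the first term contributes $x\,S_{n-1}(z)$, matching the first summand of the claimed formula.

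Next I would analyze the second term, the diagram built from two copies of $f_{n-1}$ joined by the turn-back $e_{n-1}$, with $n-2$ strands running through the middle (carrying $f_{n-2}$ by Lemma \ref{JONESWENZL:thmfnbasis}(c)$'$). The goal is to show that its closure equals $\mathit{tr}_{\mathit{Mb}_1}(f_{n-1})$. Applying $\mathit{tr}_{\mathit{Mb}_1}$, the closure arc of the $n$-th strand routes through the crosscap and, together with the cap and cup of $e_{n-1}$, splices the $(n-1)$-st output of one $f_{n-1}$ box through the crosscap to the $(n-1)$-st input of the other box; the remaining $n-2$ strands and the annular closure join the two boxes around the crosscap. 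Using $f_{n-1}f_{n-1}=f_{n-1}$ from Lemma \ref{JONESWENZL:thmfnbasis}(c), I would isotope the two boxes together and merge them into a single $f_{n-1}$. The surviving closure then consists of one strand through the crosscap (the former turn-back) together with $n-2$ strands around it, which is precisely $\mathit{tr}_{\mathit{Mb}_1}(f_{n-1})$. Carrying the coefficient $-\Delta_{n-2}/\Delta_{n-1}$ gives the second summand. As a base-case sanity check, for $n=2$ one has $f_2=1_2-\tfrac{1}{d}\,e_1$, and closing verifies $\mathit{tr}_{\mathit{Mb}_1}(e_1)=x=\mathit{tr}_{\mathit{Mb}_1}(f_1)$, so that $\mathit{tr}_{\mathit{Mb}_1}(f_2)=xz-\tfrac{1}{d}x$, exactly as predicted.

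I expect the merging step for the second term to be the main obstacle. The difficulty is that the strand joining the two $f_{n-1}$ boxes passes through the crosscap, so bringing the boxes together is not a planar isotopy: one must slide a box across the crosscap and invoke its antipodal behaviour (as in Lemma \ref{Lemma:eipull} and Corollary \ref{JONESWENZL:antipodal}) to confirm that the box returns to an $f_{n-1}$ in the correct position, and one must check that this isotopy produces no spurious scalar factor, i.e.\ no trivial circles (hence no extra powers of $d$) and no additional $\Delta$-ratios beyond the one already supplied by the recursion. Verifying that the crosscap-threaded turn-back becomes \emph{exactly} the single through-crosscap strand of $\mathit{tr}_{\mathit{Mb}_1}(f_{n-1})$, with nothing left over, is the crux of the argument.
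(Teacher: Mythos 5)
Your proposal is correct and follows essentially the same route as the paper: apply Wenzl's recursion at the crosscap-threaded strand, evaluate the first term as $xS_{n-1}(z)$ via Corollary \ref{coro:TraceMBAnn}, and collapse the two $f_{n-1}$ boxes in the second term by idempotency (Lemma \ref{JONESWENZL:thmfnbasis}(c)) to recover $\mathit{tr}_{\mathit{Mb}_1}(f_{n-1})$. The merging step you flag as the crux is actually unproblematic: the two boxes are already joined by the $n-1$ closure arcs lying in the orientable annular region surrounding the crosscap, so one box can be slid along those arcs to meet the other without ever crossing the crosscap, and no antipodal bookkeeping (and no spurious scalar) arises---which is why the paper's proof invokes only idempotency at that point.
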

\begin{proof}
By applying Wenzl's formula to the $x$-curve we have the following recursive formula.
\begin{eqnarray*}
\vcenter{\hbox{
\begin{overpic}[scale = .4]{HopfMb3.pdf}
\put(35, 9){\fontsize{6}{6}$n-1$}
\end{overpic} }} &=& \vcenter{\hbox{
\begin{overpic}[scale = .4]{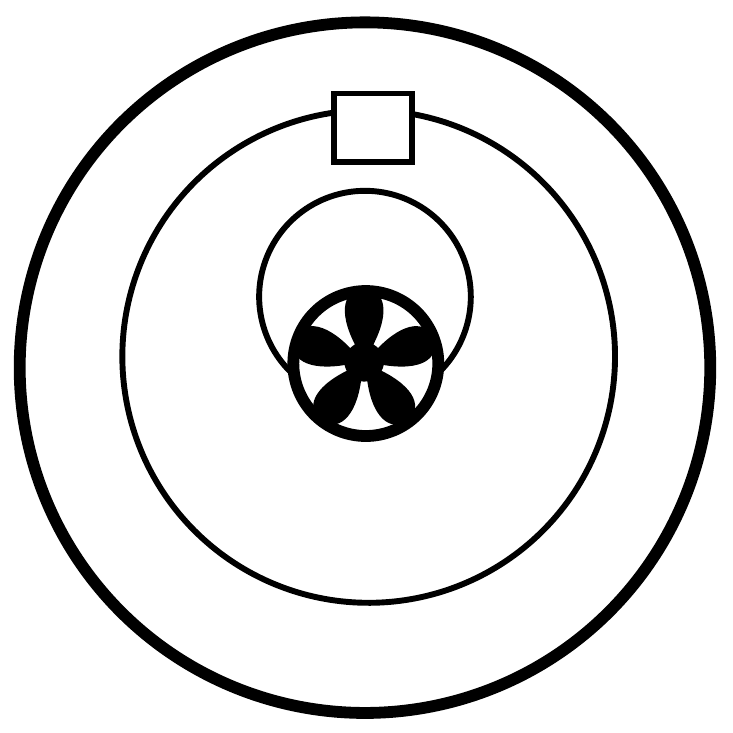}
\put(35, 8){\fontsize{6}{6}$n-1$}
\end{overpic} }} -\frac{\Delta_{n-2}}{\Delta_{n-1}} \vcenter{\hbox{
\begin{overpic}[scale = .4]{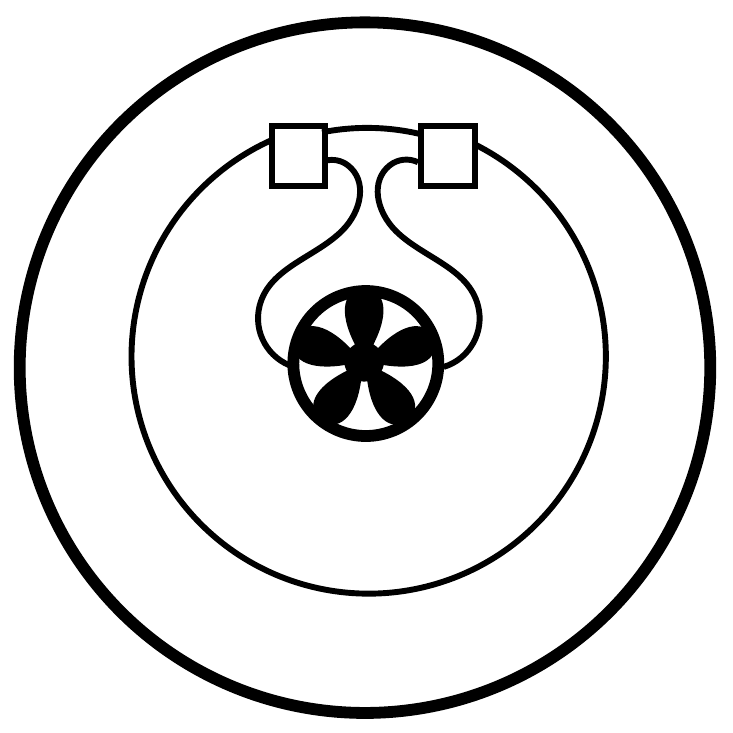}
\put(35, 9){\fontsize{6}{6}$n-1$}
\end{overpic} }}.
\end{eqnarray*}
By Corollary \ref{coro:TraceMBAnn} and Lemma \ref{JONESWENZL:thmfnbasis}(c),

\begin{eqnarray*}
\vcenter{\hbox{
\begin{overpic}[scale = .4]{HopfMb3.pdf}
\put(35, 9){\fontsize{6}{6}$n-1$}
\end{overpic} }} &=& x S_{n-1}(z) -\frac{\Delta_{n-2}}{\Delta_{n-1}} \vcenter{\hbox{
\begin{overpic}[angle=180, scale = .4]{HopfMb3.pdf}
\put(33, 73){\fontsize{6}{6}$n-2$}
\end{overpic} }}.
\end{eqnarray*}
\end{proof}

\begin{proposition}
$$\mathit{tr}_{Mb_1}(f_n) = \frac{x}{\Delta_{n-1}}\sum_{k=0}^{n-1} (-1)^{n-1+k} S_{k}(z)\Delta_{k}. $$
\end{proposition}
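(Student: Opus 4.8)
The plan is to prove the formula by induction on $n$, using the recursion established in Lemma \ref{lemma:Mb1} as the sole engine. All of the geometric content (sliding through the crosscap via Corollary \ref{JONESWENZL:antipodal}, the annular trace of Corollary \ref{coro:TraceMBAnn}, and the idempotent property) has already been absorbed into that recursion, so what remains is a purely algebraic verification that the proposed closed form satisfies the same first-order recursion together with the correct initial condition.

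For the base case $n=1$, I would observe that $\mathit{tr}_{Mb_1}(f_1)$ is a single arc closed through the crosscap, which by definition is the curve $x$. The right-hand side of the proposition at $n=1$ is $\frac{x}{\Delta_0}(-1)^{0}S_0(z)\Delta_0 = x$, using $S_0(z)=1$ and $\Delta_0=1$, so the two sides agree. For the inductive step, I assume the formula holds for $n-1$ and substitute it into the recursion $\mathit{tr}_{Mb_1}(f_n) = xS_{n-1}(z) - \frac{\Delta_{n-2}}{\Delta_{n-1}}\mathit{tr}_{Mb_1}(f_{n-1})$. The factor $\Delta_{n-2}$ cancels against the denominator $\Delta_{n-2}$ of the hypothesis, leaving
$$\mathit{tr}_{Mb_1}(f_n) = xS_{n-1}(z) - \frac{x}{\Delta_{n-1}}\sum_{k=0}^{n-2}(-1)^{n-2+k}S_k(z)\Delta_k.$$
The only bookkeeping step is the sign identity $-(-1)^{n-2+k}=(-1)^{n-1+k}$, which rewrites the subtracted sum as $+\frac{x}{\Delta_{n-1}}\sum_{k=0}^{n-2}(-1)^{n-1+k}S_k(z)\Delta_k$. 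I would then recognize the leading term $xS_{n-1}(z)$ as exactly the $k=n-1$ summand of the target expression, since $(-1)^{(n-1)+(n-1)}=1$ and $\Delta_{n-1}/\Delta_{n-1}=1$. Reassembling this top term with the remaining sum yields $\frac{x}{\Delta_{n-1}}\sum_{k=0}^{n-1}(-1)^{n-1+k}S_k(z)\Delta_k$, which is the claim for $n$.

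I do not expect any genuine obstacle here: the recursion is linear and first order in $n$, so once the initial condition is fixed the closed form is forced, and the proof is essentially an exercise in careful index and parity tracking. The two places worth double-checking are the parity shift between the exponents $n-2+k$ and $n-1+k$, and the off-by-one reindexing when the free term $xS_{n-1}(z)$ is reabsorbed as the $k=n-1$ term of the sum. The one structural point that makes the induction close cleanly is that the prefactor $\frac{\Delta_{n-2}}{\Delta_{n-1}}$ of the recursion precisely converts the denominator $\Delta_{n-2}$ appearing in the inductive hypothesis into the denominator $\Delta_{n-1}$ demanded by the statement, so no residual $\Delta$-factors survive.
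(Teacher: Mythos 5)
Your proposal is correct and follows essentially the same route as the paper: induction on $n$ with the trivial base case $\mathit{tr}_{Mb_1}(f_1)=x$, substitution of the inductive hypothesis into the recursion of Lemma \ref{lemma:Mb1}, cancellation of $\Delta_{n-2}$, the sign flip $-(-1)^{n-2+k}=(-1)^{n-1+k}$, and absorption of $xS_{n-1}(z)$ as the $k=n-1$ summand. No gaps; your bookkeeping matches the paper's computation exactly.
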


\begin{proof}
The base case is trivial, 
$$\mathit{tr}_{Mb_1}(f_1) = \vcenter{\hbox{
\begin{overpic}[scale = .3]{HopfMb2.pdf} 
\end{overpic} }}=x .$$

Suppose $\mathit{tr}_{Mb_1}(f_{n-1}) = \frac{x}{\Delta_{n-2}}\sum_{k=0}^{n-2} (-1)^{n-2+k} S_{k}(z)\Delta_{k}.$ Then by Lemma \ref{lemma:Mb1}, 
\begin{eqnarray*}
  \mathit{tr}_{\mathit{Mb}_1}(f_n) &=& xS_{n-1}(z) - \frac{\Delta_{n-2}}{\Delta_{n-1}} \mathit{tr}_{\mathit{Mb}_1}(f_{n-1}) \\
&=&  xS_{n-1}(z) - \frac{x}{\Delta_{n-1}} \sum_{k=0}^{n-2} (-1)^{n-2+k} S_{k}(z)\Delta_{k} \\
&=& \frac{x}{\Delta_{n-1}}  S_{n-1}(z) \Delta_{n-1} + \frac{x}{\Delta_{n-1}} \sum_{k=0}^{n-2} (-1)^{n-1+k} S_{k}(z)\Delta_{k}.
\end{eqnarray*}
\end{proof}

As discussed in Section \ref{MBcrossingless}, the number of times an arc intersects the crosscap can be reduced by an even number. Therefore, we expect a different formula when closing two arcs, versus one arc, from $f_n$ through the crosscap. \\

Let $\mathit{tr}_{Mb_2}(f_n)$ be obtained from closing two arcs from $f_n$ through the crosscap and closing the rest of the arcs in such a way that it surrounds the crosscap, similar to Figure \ref{Mobius:trace}. Then, as expected, by applying Wenzl's formula we see a different recursive formula than that of Lemma \ref{lemma:Mb1}. 

\begin{lemma}\label{Lemma:Mb2}  For $n \geq 2$,
    \begin{equation*}
    tr_{Mb_2}(f_n) = S_{n-1}(z)-S_{n-2}(z)+ \frac{\Delta_{n-3}}{\Delta_{n-1}} tr_{Mb_{2}}
(f_{n-1}).\end{equation*}
\end{lemma}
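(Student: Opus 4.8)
The plan is to mirror the proof of Lemma~\ref{lemma:Mb1}: apply Wenzl's recursion (Equation~\ref{JONESWENZL:wenzlrecursion}) to the copy of $f_n$ appearing in $\mathit{tr}_{Mb_2}(f_n)$ and then resolve each of the two resulting diagrams. Writing $f_n = (f_{n-1}\otimes 1) - \frac{\Delta_{n-2}}{\Delta_{n-1}}\, f_{n-1}e_{n-1}f_{n-1}$ and taking the $Mb_2$-closure of both sides splits $\mathit{tr}_{Mb_2}(f_n)$ into an ``identity'' term $T_1$ coming from $f_{n-1}\otimes 1$ and a ``turn-back'' term $T_2$ coming from $f_{n-1}e_{n-1}f_{n-1}$, so that $\mathit{tr}_{Mb_2}(f_n) = T_1 - \frac{\Delta_{n-2}}{\Delta_{n-1}} T_2$.

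First I would evaluate $T_1$. Here the strand split off by Wenzl's formula is one of the two arcs threading the crosscap; together with the remaining through-arc it produces two consecutive intersections with the crosscap, which I remove using the reduction move of Section~\ref{MBcrossingless}. After this reduction the diagram is an annular closure of $f_{n-1}$ surrounding the crosscap, so Corollary~\ref{coro:TraceMBAnn} together with Lemma~\ref{JONESWENZL:thmfnbasis}(c) yields $T_1 = S_{n-1}(z)$. The key point, and what distinguishes this from the $Mb_1$ case, is that no $x$-curve survives: the two crosscap intersections cancel, which is exactly why the final formula is a polynomial in $z$ alone.

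The turn-back term $T_2$ is the main obstacle. Unlike in Lemma~\ref{lemma:Mb1}, the cup-cap $e_{n-1}$ now caps off arcs that meet the crosscap, so $T_2$ is \emph{not} simply $\mathit{tr}_{Mb_2}(f_{n-1})$; this is already visible from the coefficient $\frac{\Delta_{n-3}}{\Delta_{n-1}}$ in the statement, which is the product $\frac{\Delta_{n-2}}{\Delta_{n-1}}\cdot\frac{\Delta_{n-3}}{\Delta_{n-2}}$ of two consecutive Wenzl coefficients rather than a single one. Accordingly I would apply Wenzl's recursion a second time to one of the copies of $f_{n-1}$ inside $T_2$, using Lemma~\ref{JONESWENZL:thmfnbasis}(b),(c) to collapse the idempotents against $e_{n-1}$ and the antipodal slide of Corollary~\ref{JONESWENZL:antipodal} to move the surviving idempotent through the crosscap. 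One of the two resulting pieces closes annularly and contributes $\frac{\Delta_{n-1}}{\Delta_{n-2}} S_{n-2}(z)$, the factor $\frac{\Delta_{n-1}}{\Delta_{n-2}}$ being the partial-trace factor of Corollary~\ref{JONESWENZL:corollarytrace} applied to $f_{n-1}$, while the other reproduces $\mathit{tr}_{Mb_2}(f_{n-1})$ with coefficient $-\frac{\Delta_{n-3}}{\Delta_{n-2}}$; that is, $T_2 = \frac{\Delta_{n-1}}{\Delta_{n-2}} S_{n-2}(z) - \frac{\Delta_{n-3}}{\Delta_{n-2}}\,\mathit{tr}_{Mb_2}(f_{n-1})$.

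Substituting into $\mathit{tr}_{Mb_2}(f_n) = T_1 - \frac{\Delta_{n-2}}{\Delta_{n-1}} T_2$ and simplifying the coefficients — using $\frac{\Delta_{n-2}}{\Delta_{n-1}}\cdot\frac{\Delta_{n-1}}{\Delta_{n-2}} = 1$ and $\frac{\Delta_{n-2}}{\Delta_{n-1}}\cdot\frac{\Delta_{n-3}}{\Delta_{n-2}} = \frac{\Delta_{n-3}}{\Delta_{n-1}}$, with the two minus signs combining to a plus — gives the claimed recursion. The steps I expect to be delicate are precisely the two crosscap reductions: verifying that the pair of through-arcs in $T_1$ genuinely cancels to leave a clean annular closure, and tracking signs when sliding the idempotent through the crosscap in $T_2$ via Corollary~\ref{JONESWENZL:antipodal}. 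The hypothesis $n \geq 2$ (where $\Delta_{-1}=0$ kills the recursive term at $n=2$, leaving $\mathit{tr}_{Mb_2}(f_2)=S_1(z)-S_0(z)$) should be checked directly against the base configuration as a consistency test.
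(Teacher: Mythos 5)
Your proposal is correct and follows the paper's own proof essentially step for step: one application of Wenzl's recursion whose identity term reduces, via the two-point crosscap cancellation and Corollary \ref{coro:TraceMBAnn}, to $S_{n-1}(z)$, followed by a second application of Wenzl's recursion inside the turn-back term producing exactly the two pieces $\frac{\Delta_{n-1}}{\Delta_{n-2}}S_{n-2}(z)$ and $-\frac{\Delta_{n-3}}{\Delta_{n-2}}\,\mathit{tr}_{Mb_2}(f_{n-1})$, which combine to the stated recursion. The only difference is cosmetic bookkeeping: you cite the partial-trace Corollary \ref{JONESWENZL:corollarytrace} directly where the paper invokes its M\"obius-band iterate, Corollary \ref{coro:tracefm}.
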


\begin{proof}
    By applying Wenzl's formula to the inner most curve intersecting the crosscap, then by removing two intersection points from the first term in the sum and by applying Corollary \ref{coro:TraceMBAnn} we have the following formula.
\begin{eqnarray*}
\vcenter{\hbox{
\begin{overpic}[scale = .4]{HopfMb3.pdf}
\put(35, 9){\fontsize{6}{6}$n-2$}
\put(22, 50){\fontsize{6}{6}$2$}
\end{overpic} }} &=& S_{n-1}(z) -\frac{\Delta_{n-2}}{\Delta_{n-1}} \vcenter{\hbox{
\begin{overpic}[scale = .4]{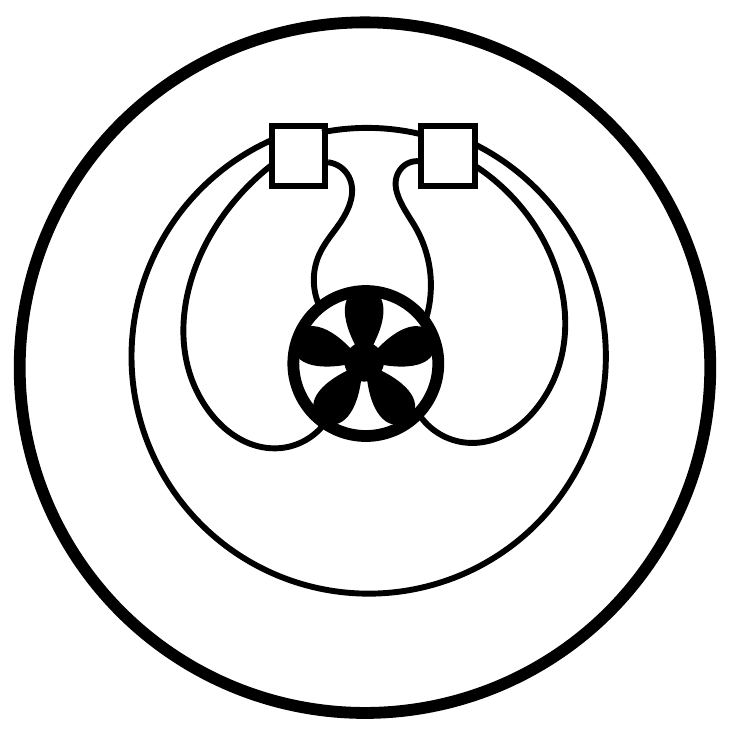}
\put(35, 9){\fontsize{6}{6}$n-2$}
\put(35, 73){\fontsize{6}{6}$n-2$}
\end{overpic} }}.
\end{eqnarray*}
We obtain the following equation by applying Wenzl's formula on the last term of the sum and then by applying Lemma \ref{JONESWENZL:thmfnbasis} and removing two intersection points from the third term.

\begin{eqnarray*}
\vcenter{\hbox{
\begin{overpic}[scale = .4]{HopfMb3.pdf}
\put(35, 9){\fontsize{6}{6}$n-2$}
\put(22, 50){\fontsize{6}{6}$2$}
\end{overpic} }} &=& S_{n-1}(z) \\
&&-\frac{\Delta_{n-2}}{\Delta_{n-1}} \left( \vcenter{\hbox{
\begin{overpic}[scale = .4]{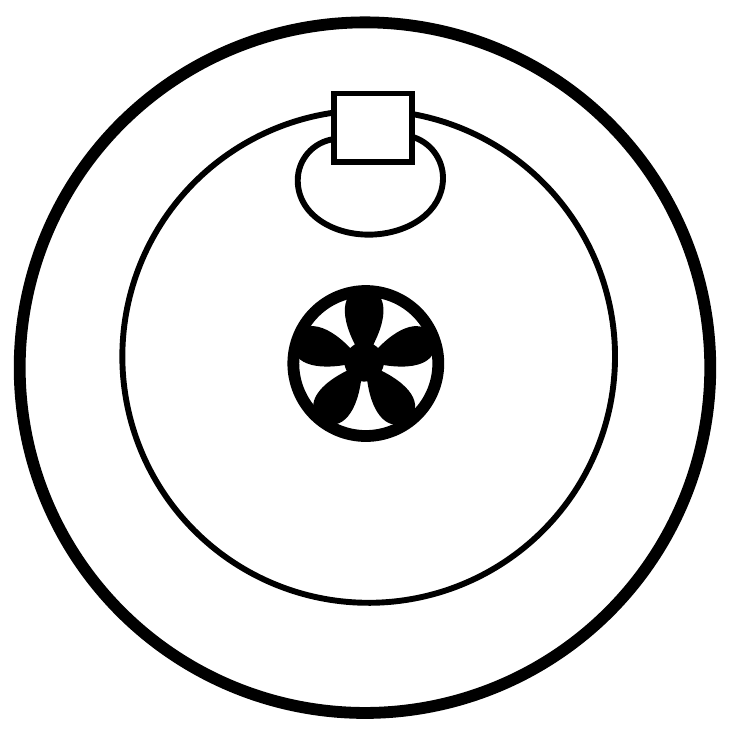}
\put(35, 8){\fontsize{6}{6}$n-2$}
\end{overpic} }} -\frac{\Delta_{n-3}}{\Delta_{n-2}} \vcenter{\hbox{
\begin{overpic}[scale = .4]{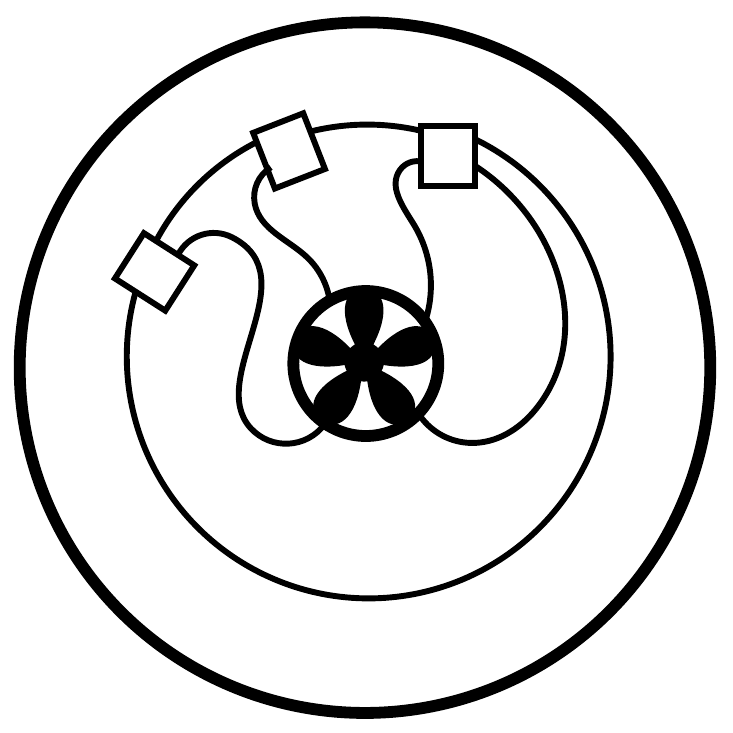}
\put(35, 8){\fontsize{6}{6}$n-2$}
\put(34, 74){\fontsize{6}{6}$n-2$}
\put(12, 59){\rotatebox{40}{\fontsize{6}{6}$n-3$}}
\end{overpic} }}\right).
\end{eqnarray*}
We obtain our desired result after applying Corollaries \ref{coro:tracefm} and \ref{coro:TraceMBAnn} and Lemma \ref{JONESWENZL:thmfnbasis}(d).
\end{proof}

From Lemma \ref{Lemma:Mb2} we obtain a formula for $\mathit{tr}_{Mb_2}(f_n)$ and it reveals that $\mathit{tr}_{\mathit{Mb}_2}(f_n)$ is not a factor of $x$.

\begin{proposition}
    For $n \geq 2$,
$$ \mathit{tr}_{\mathit{Mb}_2}(f_n) =  \sum_{i=0}^{n-2} \frac{\Delta_{i+1} \Delta_{i}}{\Delta_{n-1}\Delta_{n-2}}(S_{i+1}(z)-S_{i}(z)). $$
\end{proposition}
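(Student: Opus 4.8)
The plan is to prove the closed form by induction on $n$, using the recursion established in Lemma \ref{Lemma:Mb2} as the driving engine. The recursion
$$tr_{Mb_2}(f_n) = S_{n-1}(z)-S_{n-2}(z)+ \frac{\Delta_{n-3}}{\Delta_{n-1}} tr_{Mb_{2}}(f_{n-1})$$
expresses $tr_{Mb_2}(f_n)$ in terms of $tr_{Mb_2}(f_{n-1})$, so once the base case is in place, the claimed formula should drop out by substituting the inductive hypothesis and simplifying. I would first verify the base case $n=2$: here the proposed sum collapses to its single $i=0$ term, $\frac{\Delta_1\Delta_0}{\Delta_1\Delta_0}(S_1(z)-S_0(z)) = S_1(z)-S_0(z)$, and this should be checked against the direct evaluation of $tr_{Mb_2}(f_2)$ (equivalently, against the recursion with $n=2$, noting that $\Delta_{-1}=0$ kills the trailing term so that $tr_{Mb_2}(f_2) = S_1(z)-S_0(z)$).

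For the inductive step I would assume
$$tr_{Mb_2}(f_{n-1}) = \sum_{i=0}^{n-3} \frac{\Delta_{i+1}\Delta_{i}}{\Delta_{n-2}\Delta_{n-3}}\bigl(S_{i+1}(z)-S_{i}(z)\bigr),$$
substitute it into the recursion of Lemma \ref{Lemma:Mb2}, and observe that the prefactor $\frac{\Delta_{n-3}}{\Delta_{n-1}}$ cancels the $\Delta_{n-3}$ appearing in the denominator of the hypothesis. This leaves
$$tr_{Mb_2}(f_n) = \bigl(S_{n-1}(z)-S_{n-2}(z)\bigr) + \frac{1}{\Delta_{n-1}\Delta_{n-2}}\sum_{i=0}^{n-3}\Delta_{i+1}\Delta_{i}\bigl(S_{i+1}(z)-S_{i}(z)\bigr).$$
The key move is then to recognize the leading term $S_{n-1}(z)-S_{n-2}(z)$ as precisely the $i=n-2$ summand of the target expression, since $\frac{\Delta_{n-1}\Delta_{n-2}}{\Delta_{n-1}\Delta_{n-2}}=1$. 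Folding that term into the sum raises the upper index from $n-3$ to $n-2$ and yields exactly
$$\sum_{i=0}^{n-2}\frac{\Delta_{i+1}\Delta_{i}}{\Delta_{n-1}\Delta_{n-2}}\bigl(S_{i+1}(z)-S_{i}(z)\bigr),$$
completing the induction.

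I do not expect any serious obstacle here: once Lemma \ref{Lemma:Mb2} is granted, the argument is a clean telescoping-style induction. The only points requiring care are bookkeeping ones — correctly tracking the index shift from $n-1$ to $n$ in the denominators $\Delta_{n-1}\Delta_{n-2}$, ensuring the cancellation of $\Delta_{n-3}$ is legitimate (which uses that $\Delta_{n-3}\neq 0$ for $n\geq 3$, with the edge case $n=2$ absorbed into the base case), and confirming that the isolated recursion term really is the top summand rather than an off-by-one neighbor. All the genuine topological and skein-theoretic content has already been spent in deriving the recursion, so this proposition is the purely algebraic consequence that packages it into closed form.
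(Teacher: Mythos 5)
Your proposal is correct and follows essentially the same route as the paper: induction on $n$ via Lemma \ref{Lemma:Mb2}, with the base case $\mathit{tr}_{\mathit{Mb}_2}(f_2) = S_1(z)-S_0(z)$ and the inductive step absorbing the leading term $S_{n-1}(z)-S_{n-2}(z)$ as the top summand (the paper just writes the sum with indices shifted by one). The only difference is cosmetic: the paper establishes the base case by a direct skein computation (Wenzl's formula plus reduction of crosscap intersections) rather than your alternative appeal to the recursion with $\Delta_{-1}=0$.
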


\begin{proof}
        Since $\mathit{tr}_{\mathit{Mb}_2}(f_2)$ is just $f_2$ closed through the crosscap, then by applying Wenzl's formula and by reducing even intersection points on the crosscap, we have
    \begin{equation}\label{Eqn:Mb2}
        \mathit{tr}_{\mathit{Mb}_2}(f_2) = \vcenter{\hbox{
\begin{overpic}[scale = .3]{HopfMb9.pdf}
\put(40, 40){\fontsize{7}{7}$2$} 
\end{overpic} }} = S_1(z) - S_0(z).
    \end{equation}
    By Equation \ref{Eqn:Mb2}, the base case holds. \\

    Suppose $\mathit{tr}_{\mathit{Mb}_2}(f_{n-1}) = \frac{1}{\Delta_{n-2}\Delta_{n-3}} \sum_{i=1}^{n-2} \Delta_i \Delta_{i-1} (S_i(z)-S_{i-1}(z))$, then by Lemma \ref{Lemma:Mb2},

    \begin{eqnarray*}
    tr_{Mb_2}(f_n) &=& S_{n-1}(z)-S_{n-2}(z)+ \frac{\Delta_{n-3}}{\Delta_{n-1}} tr_{Mb_{2}}
(f_{n-1}) \\
&=& S_{n-1}(z)-S_{n-2}(z) \\
&&+ \frac{1}{\Delta_{n-1}\Delta_{n-2}} \sum_{i=1}^{n-2} \Delta_i \Delta_{i-1} (S_i(z)-S_{i-1}(z)).
    \end{eqnarray*}
\end{proof}

\section{Future Directions}
By the classification of unorientable surfaces we may extend the results given in this paper to the KBSM of the twisted $I$-bundle over unorientable surfaces as well as Temperley-Lieb algebra-modules over unorientable surfaces with non-empty boundary. Future work could explore calculating analog formulas to the annular case and extending the work to unorientable surfaces then applying it to evaluating knots decorated with $f_n$ in the twisted $I$-bundle over unorientable surfaces. Since the calculations of the Jones-Wenzl idempotent of the KBSM of the twisted $I$-bundle over the M\"obius band preserve the $I$-bundle structure, then there is evidence that these calculations are giving extra information about the $I$-bundle structure and that extending to different fiber structures on the solid torus in a similar manner could uncover a different method to studying fibered manifolds. 

%\section{Conflict of interest statement}
% On behalf of all authors, the corresponding author states that there is no conflict of interest.

\end{document}